\documentclass[12pt,psamsfonts,reqno]{amsart}
\usepackage{amsmath,amsthm,amssymb,amscd,amsfonts,amsbsy}
\usepackage{subfigure,graphicx}
\usepackage{afterpage,rotating,pdflscape}
\usepackage{color}
\usepackage{cite} 
\usepackage{overpic}
\usepackage{epstopdf}
\usepackage{hyperref,url}
\usepackage{array,multirow,multicol}
\usepackage{adjustbox,blindtext}
\usepackage[dvipsnames]{xcolor}
\usepackage{float}
\usepackage{mathabx}
\usepackage{stackengine}[2013-09-11]
\usepackage{scalerel}
\newcommand*{\defeq }{\mathrel{\vcenter{\baselineskip0.5ex \lineskiplimit0pt
                     \hbox{\scriptsize.}\hbox{\scriptsize.}}}%
                     =}
                     
\stackMath
\def\ptt{\mathrel{\ThisStyle{\raisebox{-.2ex}{$\SavedStyle\scalerel*%
    {\stackinset{c}{}{t}{.5ex}{\smash{-}}{\pitchfork}}{\pitchfork}$}}}}

\newcolumntype{R}{>{\displaystyle}r}
\newcommand{\R}{\ensuremath{\mathbb{R}}}

\newcommand{\CO}{\ensuremath{\mathcal{O}}}

\newcommand{\U}{\ensuremath{\mathcal{U}}}

\newcommand{\CF}{\ensuremath{\mathcal{F}}}
\newcommand{\ov}{\overline}

\newcommand{\T}{\theta}
\newcommand{\f}{\varphi}
\newcommand{\al}{\alpha}
\newcommand{\la}{\lambda}

\newcommand{\sgn}{\mathrm{sign}}

\makeatletter
\newcommand{\tpitchfork}{%
	\raise-0.3ex\vbox{
		\baselineskip\z@skip
		\lineskip-.52ex
		\lineskiplimit\maxdimen
		\m@th
		\ialign{##\crcr\hidewidth\smash{$-$}\hidewidth\crcr$\pitchfork$\crcr}
	}%
}

\def\p{\partial}
\def\e{\varepsilon}

\newtheorem {theorem} {Theorem} 

\newtheorem {proposition} {Proposition}

\newtheorem {lemma} {Lemma}
\newtheorem {remark}{Remark}
\newtheorem {definition} {Definition}

\newtheorem {mtheorem} {Theorem}

\textwidth=14.5truecm
\usepackage{mathpazo}
\begin{document}

\title[Limit cycles in regularized Filippov systems]{On limit cycles in regularized Filippov systems\\ bifurcating from homoclinic-like connections\\ to regular-tangential singularities}
\author[D.D. Novaes and G. Rond\'{o}n]
{Douglas D. Novaes and Gabriel Rond\'{o}n}

\address{Departamento de Matem\'{a}tica, Instituto de Matem\'{a}tica, Estat\'{i}stica e Computa\c{c}\~{a}o Cient\'{i}fica (IMECC), Universidade
Estadual de Campinas (UNICAMP), Rua S\'{e}rgio Buarque de Holanda, 651, Cidade Universit\'{a}ria Zeferino Vaz, 13083--859, Campinas, SP, Brazil}
\email{ddnovaes@unicamp.br}

\address{UNESP - Universidade Estadual Paulista, S\~{a}o Jos\'{e} do Rio Preto, S\~{a}o Paulo, Brazil}
\email{gabriel.rondon@unesp.br}

\subjclass[2010]{34A26, 34A36, 34C23, 37G15}

\keywords{limit cycles, nonsmooth differential systems, regularization, tangential singularities, $\Sigma-$polycycles}

\maketitle

\begin{abstract}
In this paper, we are concerned about smoothing of Filippov systems around homoclinic-like connections to regular-tangential singularities. We provide conditions to guarantee the existence of limit cycles bifurcating from such connections. Additional conditions are also provided to ensured the stability and uniqueness of such limit cycles. All the proofs are based on the construction of the first return map of the regularized Filippov system around homoclinic-like connections. Such a map is obtained by using a recent characterization of the local behaviour of the regularized Filippov system around regular-tangential singularities. Fixed point theorems and Poincar\'{e}-Bendixson arguments are also employed.
\end{abstract}

\section{Introduction}

The increasing interest in the study of piecewise smooth vector fields arises naturally due to the number of applications in different areas of knowledge, such as engineering, physics, economics, and biology. For more details see the books \cite{MR2368310,Mike} and the references therein. 

In this work, we are interested in planar piecewise smooth vector fields. More precisely, let $\mathcal{D}$ be an open set in $\R^2$ and $h:\mathcal{D}\to\R$ be a
$C^\infty$ function having 0 as a regular value. We assume for simplicity that
$\Sigma=h^{-1}(0)$ has a single connected component so that $M\setminus \mathcal{D}$ has two
connected components, denoted by $\Sigma^+=h^{-1}(0,\infty)$ and $\Sigma^-=h^{-1}(-\infty,0)$. Accordingly, the piecewise smooth vector field in $\R^2$ is defined by
\begin{equation}\label{locdds}
Z(p)=(X^+,X^-)_{\Sigma}=\left\{\begin{array}{l}
X^+(p),\quad\textrm{if}\quad p\in\Sigma^+,\vspace{0.1cm}\\
X^-(p),\quad\textrm{if}\quad p\in\Sigma^-,
\end{array}\right. \quad \text{for}\quad p\in \mathcal{D}.
\end{equation}
We suppose that the vector fields $X^+$ and $X^-$ have an extension to $\Sigma$ which is, at least $C^2$. 

Throughout this paper we will denote the components of $X^\pm$ by $X^\pm_i$ for each $i\in\{1,2\},$ i.e. $X^\pm=(X^\pm_1,X^\pm_2)$.

\subsection{Filippov's Convention}
In \cite{Filippov88}, the local trajectories of piecewise smooth vector fields \eqref{locdds} were defined as solutions of a differential inclusion $\dot p\in\CF_Z(p).$ This approach is called Filippov's convention. The piecewise smooth vector field \eqref{locdds} is called Filippov system when its dynamics is provided by the Filippov's convention. 

In order to illustrate the Filippov's convention we define the following open regions on $\Sigma:$
\begin{enumerate}
\item[(i)] $\Sigma^s=\{p\in\Sigma:\, X^+h(p)\cdot X^-h(p) < 0\}$,

\item[(ii)] $\Sigma^c=\{p\in \Sigma:\, X^+h(p)\cdot X^-h(p) > 0\}$,
\end{enumerate}
where $X^{\pm}h(p)=\langle\nabla h(p),X^{\pm}(p)\rangle$ denotes the Lie derivative  of $h$ in the direction of the vector fields $X^{\pm}.$ In addition, $(X^{\pm})^ih(p)=X^{\pm}((X^{\pm})^{i-1}h)(p)$ for $i>1.$ 

The set $\Sigma^c$ is called {\it crossing region}. For $p\in\Sigma^c,$ the trajectories either side of the discontinuity $\Sigma,$ reaching $p,$ can be joined continuously, forming a trajectory that crosses $\Sigma^c.$

The set  $\Sigma^{s}$ is called {\it sliding region} and both vectors fields $X^+$ and $X^-$ simultaneously point inward to or outward from $\Sigma.$ We consider the following {\it sliding vector field} defined on $\Sigma^{s}:$ 
\begin{equation*}
Z^s(p)= \dfrac{X^- h(p) X^+(p)- X^+ h(p) X^-(p)}{X^- h(p) - X^+ h(p) },\,\, \text{for} \,\, p\in \Sigma^{s}.
\end{equation*}
Thus, the trajectories either side of the discontinuity $\Sigma$ reaching $p\in \Sigma^{s}$  can be joined continuously to trajectories that slide on $\Sigma^{s}$ following $Z^s(p)$. In addition, a singularity of the sliding vector field $Z^s$ is called pseudo-equilibrium. 

In the Filippov context, the notion of {\it$\Sigma$-singular points} also comprehends the tangential points $\Sigma^t$ constituted by the contact points between $X^+$ and $X^-$ with $\Sigma,$ i.e. $\Sigma^t=\{p\in \Sigma:\, X^+h(p)\cdot X^-h(p) = 0\}.$ In this paper, we are interested in contact points of finite degeneracy. Recall that $p$ is a {\it contact of order $k-1$} (or multiplicity $k$) between $X^\pm$ and $\Sigma$ if $0$ is a root of multiplicity $k$ of $f(t)\defeq h\circ \f_{X^\pm}(t,p),$ where $t\mapsto \f_{X^\pm}(t,p)$ is the trajectory of $X^\pm$ starting at $p.$ Equivalently, $$X^\pm h(p) = (X^\pm)^2h(p) = \ldots = (X^\pm)^{k-1}h(p) =0,\text{ and } (X^\pm)^{k} h(p)\neq 0.$$
In addition, an even multiplicity contact, say $2k,$ is called {\it visible} for $X^+$ (resp. $X^-$) when $(X^{+})^{2k}h(p)>0$ (resp. $(X^{-})^{2k}h(p)<0$). Otherwise, it is called {\it invisible}. 

In this work, we shall focuses our attention in {\it visible regular-tangential singularities of multiplicity $2k$}, which are formed by a visible even multiplicity of $X^+$ and a regular point of $X^-,$ or vice versa (see Figure \ref{fig2tan}).

\begin{figure}[H]
	\begin{center}
		\begin{overpic}[scale=0.63]{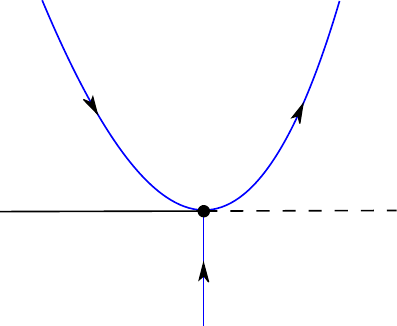}
		\end{overpic}
		\caption{\tiny{Visible regular-tangential singularity.}}
	\label{fig2tan}
	\end{center}
	\end{figure}

\subsection{Sotomayor-Teixeira Regularization}
Frequently, non-smooth mathematical models are discontinuous idealizations of regular phenomena. Thus, in order to investigate a regular phenomenon with non-smooth mathematical model, it seems natural to inquire how the solutions of smooth systems, which converge to the non-smooth one, behave in the limit. Therefore, it is usual to consider a non-smooth system as a singular limit of a 1-parameter family of smooth vector fields. 

The concept of smoothing of a piecewise smooth vector field $Z$  was firstly introduced by Sotomayor and Teixeira in \cite{ST1995} and consists in obtaining a one-parameter family of smooth vector fields $Z_{\e}$ converging to $Z$ when $\e$ converges to $0$. More preciselly, consider a $C^{\infty}$ function $\phi:\R\rightarrow\R$ satisfying $\phi(\pm1)=\pm1,$ $\phi^{(i)}(\pm1)=0$ for $i=1,2,\ldots,n,$ and $\phi'(s)>0$ for $s\in(-1,1).$ Then, a {\it $C^n$-monotonic transition function} is defined as follows:
\begin{equation}\label{Phi}
\Phi(s)=\left\{\begin{array}{ll}
\phi(s)&\text{if}\quad|s|\leqslant 1,\\
\sgn(s)&\text{if}\quad|s|\geqslant1.
\end{array}\right.
\end{equation}
The set of $C^{n}$-monotonic transition functions $\Phi$ which are not $C^{n+1}$ at $\pm 1$ is denoted by $C^{n}_{ST}.$ Hence, the Sotomayor-Teixeira $C^n$-regularization of $Z$ is the following one--parameter family of smooth vector fields $Z^{\Phi}_{\e}$
\begin{equation}\label{regula}
Z_{\e}^{\Phi}(p)=\dfrac{1+\Phi(h/\e)}{2}X^+(p)+\dfrac{1-\Phi(h/\e)}{2}X^-(p), \, \text{for}\, \e>0.
\end{equation}
Notice that the regularized vector field $Z_{\e}^{\Phi}(p)$ coincides with $X^+(p)$ or $X^-(p)$ whether $h(p)\geqslant\e$ or $h(p)\leqslant-\e,$ respectively. In the region $|h(p)|\leqslant \e,$ the vector $Z_{\e}^{\Phi}(p)$ is a linear combination of $X^+(p)$ and $X^-(p).$ 

\subsection{$\Sigma$-Polycycles in Filippov Systems}
In these last decades, the study of $\Sigma-$polycycles  in Filippov systems have been considered in several papers. For instant, in \cite{MR2012652} the authors 
introduced the critical crossing cycle bifurcation, which is defined as a one-parameter family $Z_\alpha$ of Filippov systems, where $Z_0$ has a homoclinic-like connection to a fold-regular singularity. In \cite{MR3401591}, Freire et al. proved that the unfolding of a critical crossing cycle bifurcation provided in \cite{MR2012652} holds in a generic scenario. More degenerate homoclinic-like connections to $\Sigma$-singularities has also been considered. In \cite{MR3816665}, the authors studied a codimension-two homoclinic-like connection to a visible-visible fold-fold singularity. In \cite{NovSeaTeiZel2020} its unfolding under non-autonomous periodic perturbation has been adressed.  Recently, in \cite{AndGomNov19}, Andrade et. al. developed a rather general method to investigate the unfolding of $\Sigma$-polycycles in Filippov systems. This method was applied to describe bifurcation diagrams of Filippov systems around several $\Sigma$-polycycles. The readers are referred to \cite{MR3040394,MR3022071,MR2763562,MR1960724,MR3244478} for more studies on $\Sigma-$polycycles.
The interest in studying $\Sigma-$polycycles is due to the fact that they are non-local invariant sets that provide information on the dynamics of the system.

In what follows, we shall introduce some basic concepts for this article. 
First, we  define the local separatrix at a point $p\in\Sigma $.

\begin{definition}
If $p\in\Sigma$, the \textbf{asymptotically stable (resp. unstable) separatrix} $W^s_{t,\ptt}(p),$ (resp. $W^u_{t,\ptt}(p))$ of $Z=(X^+,X^-)$ at a visible regular-tangential singularity $p$ in $\Sigma^\pm$ is defined as
\[\begin{array}{rcl}
W^{s,u}_t(p)&=&\{q=\varphi_{X^+}(t(q),p): \varphi_{X^+}(I(q),p)\subset\Sigma^+ \hspace{0.2cm}\text{and}\hspace{0.2cm} \delta_{s,u}t(q)>0\},\\
W^{s,u}_{\ptt}(p)&=&\{q=\varphi_{X^-}(t(q),p): \varphi_{X^-}(I(q),p)\subset\Sigma^- \hspace{0.2cm}\text{and}\hspace{0.2cm} \delta_{s,u}t(q)>0\},
\end{array}\] where, $\delta_u=1$, $\delta_s=-1$, and $I(q)$ is the open interval with extrema 0 and $t(q)$.
\end{definition}

Now, we introduce the concepts of regular orbit and $\Sigma$-polycycle for planar Filippov systems.

\begin{definition}\label{defpoly} Consider the Filippov system $Z=(X^+,X^-).$
\begin{enumerate}
  \item[(i)] We say that $\gamma$ is a \textbf{regular orbit} of $Z$ if it is a piecewise smooth curve such that $\gamma\cap \Sigma^+$ and $\gamma\cap \Sigma^-$ are unions of regular orbits of $X^+$ and  $X^-$, respectively, and $\gamma\cap\Sigma\subset\Sigma^c$.

	\item[(ii)] A closed curve $\Gamma$ is said to be a \textbf{$\Sigma-$polycycle} of $Z$ if it is composed by a finite number of $\Sigma-$singularities, $p_1, p_2,\cdots,p_n\in\Sigma,$ and a finite number of regular orbits of $Z$, $\gamma_1, \gamma_2,\cdots,\gamma_n$, such that for each $1\leq i\leq n$, $\gamma_i$ has ending
points $p_i$ and $p_{i+1}$, and satisfies that
    \begin{enumerate}
	     \item $\Gamma$ is a $S^1$-immersion and it is oriented by increasing time along the regular orbits;
       \item there exists a non-constant first return map $\pi_\Gamma$ defined, at least, in one side of $\Gamma$.
    \end{enumerate}

\end{enumerate}
\end{definition}

\begin{remark}
Condition $(a)$ in Definition \ref{defpoly} gives the minimality of $\Sigma-$polycycles of $Z,$ i.e. a $\Sigma-$polycycle $\Gamma$ can not be written as union of two or more $\Sigma-$polycycles. This condition also establishes the notion of sides of $\Gamma$, which is used in Condition $(b).$
\end{remark}

\begin{figure}[H]
	\begin{center}
		\begin{overpic}[scale=0.7]{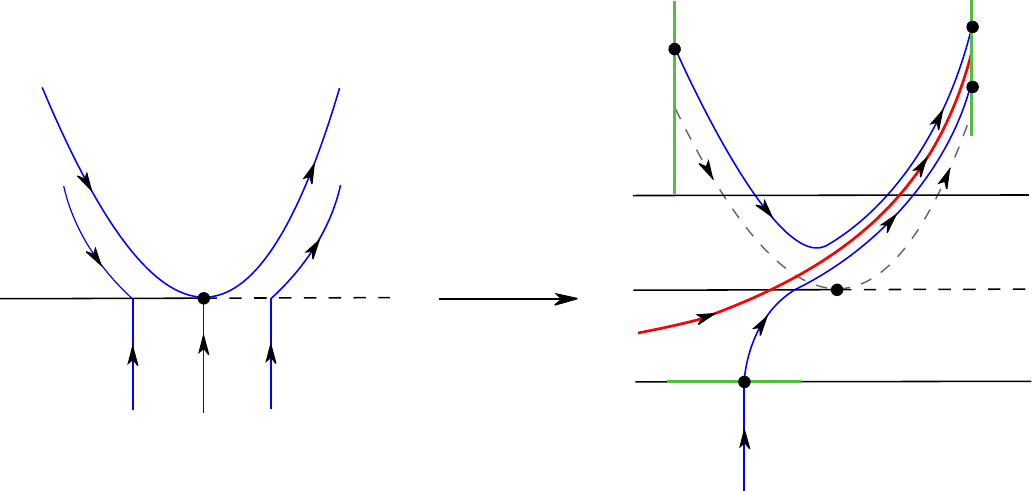}
		\put(70,8){$x$}
		\put(62,42){$y$}
		\put(96,38){$L_\e(x)$}
		\put(96,44){$U_\e(y)$}
\put(96,20){$\Sigma$}
		 \put(96,30){$y=\e$}
		\put(96,12){$y=-\e$}
		\put(48,21){{\small $\Phi$}}
		\put(35,19){$\Sigma$}
		\end{overpic}
		\caption{\tiny{Upper Transition Map $U_{\e}(y)$ and Lower Transition Map $L_{\e}(y)$ defined for $C^n$-regularizations of Filippov systems around visible regular-tangential singularities of multiplicity $2k$.}}
	\label{figreg}
	\end{center}
	\end{figure}

We remark that the simplest $\Sigma-$polycycle is the boundary limit cycle. In \cite{BonetSeara16}, Bonet and Seara studied $C^n$-regularizations of boundary limit cycles with a local fold singularity with the switching manifold $\Sigma$. In \cite{NovRon2019}, Novaes and Rond\'on generalized the results of [1] for contacts of even multiplicity 
with the switching manifold $\Sigma$. For this, the authors investigated how the trajectories of the regularized system $Z_{\e}^{\Phi}$ transits around a visible regular-tangential singularity of multiplicity $2k$. They proved that the flow of the regularized system defines transition maps between some traversal sections near the regular-tangential singularity, namely the Upper Transition Map $U_{\e}(y)$ and the Lower Transition Map  $L_{\e}(y)$ (see Figure \ref{figreg}).

Thus, our main interest in this study consists in generalizing the results obtained in \cite{BonetSeara16,NovRon2019} for a type of $\Sigma-$polycycles, which we call homoclinic-like $\Sigma-$polycycles through a unique $\Sigma-$singularity of regular-tangential type of planar Filippov systems (see Figure \ref{figtan3}). In what follows, we divide this special type of $\Sigma-$polycycles into 2 classes.
In order to define them, consider the Filippov system $Z=(X^+,X^-)$ and suppose that $Z$ has a $\Sigma-$polycycle $\Gamma$ having a unique visible regular-tangential singularity at $p$ of multiplicity $2k,$ for $k\geq 1$. Through a local change of coordinates, we can assume that $p=(0,0)$ and $h(x,y)=y$. Now, we shall locally characterize $\Gamma$ around $p$. For this, it is enough to divide the homoclinic-like $\Sigma-$polycycles at $p$ of $Z$ in the following 2 classes:
\begin{enumerate}
	 \item[(a)] 
	$W^s_t(p)\cup W^u_t(p)\subset \Gamma$ and $X^-h(p)>0$ or $X^-h(p)<0$;
	 \item[(b)] 
	 $W^s_{\ptt}(p)\cup W^u_t(p)\subset \Gamma$ or $W^u_{\ptt}(p)\cup W^s_t(p)\subset \Gamma$.  
	\end{enumerate}
Here, $\Sigma$-polycycles satisfying condition $(a)$ or $(b)$ will be called $\Sigma$-polycycles of type $(a)$ or type $(b)$, respectively (see Figure \ref{figtan3}).
 \begin{figure}[h]
	\begin{center}
    \begin{overpic}[scale=0.3]{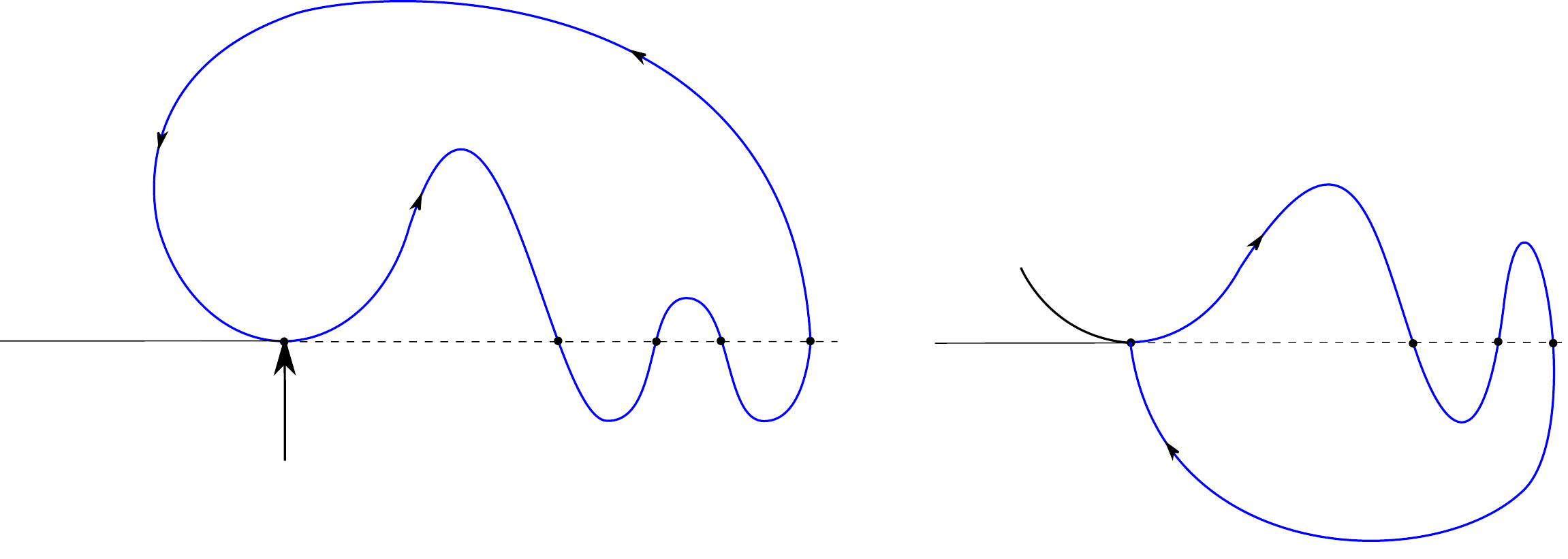}
		\put(72,15){$p$}
		\put(88,11){$q_1$}
		\put(95.5,11){$q_2$}
		\put(99.5,11){$q_3$}
		\put(17,15){$p$}
		\put(33,11){$q_1$}
		\put(38.5,15){$q_2$}
		\put(43,11){$q_3$}
		\put(30,-3){$(a)$}
		\put(83,-3){$(b)$}
		\put(48,15){$q_4$}
    \put(89,20){$\Gamma$}
		 \put(100.5,14){$\Sigma$}
		\put(54.5,14){$\Sigma$}
		\put(45,30){$\Gamma$}
		\end{overpic}
		\smallskip
		\caption{\tiny{Examples of homoclinic-like $\Sigma-$polycycles $\Gamma$ of type $(a)$ and $(b)$, respectively.}}
	\label{figtan3}
	\end{center}
	\end{figure}
	
 In addition, we investigate polycycles having several contacts of type $(a)$ and type $(b)$ which we call polycycles of type $(i)$ and type $(ii)$, respectively.
\subsection{Main Results and Structure of the Paper}

The first main result of this paper (Theorems A) is concerned with regularization of Filippov systems having a $\Sigma-$polycycle of type $(a)$. It establishes sufficient conditions for the existence of limit cycles of the regularized system $Z_\e^\Phi$ passing through of a certain compact set with nonempty interior. When the limit cycle exists, its stability is characterized and its convergence to the $\Sigma-$polycycle is ensured. Theorem A is stated and proven in Section \ref{sec:Polycycles}. In Section \ref{sec:nonexistence}, we study cases of uniqueness and nonexistence of limit cycles for the regularization of $\Sigma-$polycycles of type $(a)$. More specifically, in Proposition \ref{proptc1} we provide a class of piecewise smooth vector fields having  a $\Sigma-$polycycle of type $(a)$ for which its regularization either does not admit limit cycle or admit a unique limit cycle converging to the $\Sigma-$polycycle.

The second main result of this paper (Theorem B) is concerned with regularization of Filippov systems having a $\Sigma-$polycycle of type $(b)$. It establishes sufficient conditions for the existence of limit cycles of the regularized system $Z_\e^\Phi$ converging to the $\Sigma-$polycycle. Theorem B is stated and proven in Section \ref{sec:polycyclesb}.

In Section \ref{sec:Polycyclesk}, we generalize our main results for the case of polycycles with a finite number of tangential-regular contacts (Theorems \ref{tc2_i} and \ref{tc2_ii}).

The proofs of Theorems \ref{tc1}, \ref{tc2}, \ref{tc2_i}, and \ref{tc2_ii} are mainly based on the characterization of the upper and lower transition maps around regular-tangential singularities, fixed point theorems, and \textit{Poincar\'{e}-Bendixson Theorem}.

\section{Regularization of $\Sigma$-Polycycles of type $(a)$.}\label{sec:Polycycles}
In this section, we shall state and prove the first main result of this paper, which in particular establishes sufficient conditions under which the regularized vector field $Z_{\e}^{\Phi}$ has a limit cycle $\Gamma_{\e}$ converging to a $\Sigma-$polycycle of type $(a).$ For this, suppose that a Filippov system $Z=(X^+,X^-)$ has a $\Sigma-$polycycle $\Gamma$ of type $(a)$. Through a local change of coordinates, we can assume that $p=(0,0)$ and $h(x,y)=y$. Without loss of generality, assume that:
	
	\begin{enumerate}
	\item[(a.1)]  $X_1^+(p)>0$;
	 \item[(a.2)]  the trajectory of $Z$ through $p$ crosses $\Sigma$ transversally $m-$times
at $q_1,\cdots, q_m$, i.e. if $m\neq 0,$ then for each $i = 0,\cdots, m$, there exists $t_i>0$ such
that $\varphi_Z(t_i,q_i)=q_{i+1}$, where $q_{m+1}=p=q_0$. Moreover, $\Gamma\cap\Sigma =\{q_1,\cdots, q_m, p\}$.
	\end{enumerate}
We shall also assume that 
\begin{enumerate}
\item[(a.3)] $ X^-h(p)>0.$
\end{enumerate}
The case $X^-h(p)<0$ is obtained from this case multiplying the vector field $Z$ by -1 (see Remark \ref{remark1} below).

Notice that assumption $(a.2)$ above guarantees the existence of a diffeomorphism $D$ associated to $Z,$ which we call exterior map. In what follows, we characterize such a map $D.$ Since $X_1^+(p)> 0$, implies that there exists an open set $U$ such that $X_1^+(x,y)>0,$ for all $(x,y)\in U.$ Take $\rho,\T>0$ small enough in order that the points $q^{u}=(\T,\ov{y}_\T)\in W^{u}_t(p)$ and $q^{s}=(-\rho,\ov{y}_{-\rho})\in W^{s}_t(p)$ are contained in $U$. Hence, there exist $\delta^{u,s}$ and $\eta$ positive numbers such that
\begin{equation}\label{tsec}
\begin{array}{l}
\tau^{u}_t=\{(\T,y):y\in(\ov{y}_\T-\delta^{u},\ov{y}_\T+\delta^{u})\},\\
\tau^{s}_t=\{(-\rho,y):y\in(\ov{y}_{-\rho}-\delta^{s},\ov{y}_{-\rho}+\delta^{s})\},\quad\text{and}\\
\sigma_p=\{0\}\times[0,\eta)
\end{array}\end{equation} are transversal sections of $X^+$.
In addition, since $X_2^\pm(q_i)\neq 0$ for each $i = 1,\cdots, m,$ there exist $\e_i>0$ such that $\tau_i=(q_i-\e_i,q_i+\e_i)\times\{0\}$ is a transversal sections of $X^\pm$. Hence, by the \textit{Tubular Flow Theorem} (see\cite{MR669541}) there exist the $C^{2k}-$diffeomorphisms $T^{u,s}:\sigma_p\longrightarrow\tau^{u,s}_t$ and $D_i:\tau_{i-1}\longrightarrow\tau_{i}$ for each $i = 1,\cdots, m+1,$ such that $T^{u,s}(p)=q^{u,s}$ and $D_1(q^u)=q_1,$ $D_{m+1}(q_m)=q^s,$ and $D_i(q_{i-1})=q_{i}$ for each $i = 2,\cdots, m,$ where $\tau_0=\tau^u_t$ and $\tau_{m+1}=\tau^s_t$ (see Figure \ref{figtan1}). Thus, defining the $C^{2k}-$diffeomorphism $D:=D_{m+1}\circ\cdots\circ D_1$ and expanding $D$ around $y=\ov{y}_\T$, we get
 \begin{equation}\label{Di}
  D(y)=\ov{y}_{-\rho}+r_{\T,\rho}(y-\ov{y}_\T)+\CO((y-\ov{y}_\T)^2),
 \end{equation}
where $r_{\T,\rho}=\frac{d D}{dy}(\ov{y}_\T).$
 \begin{figure}[h]
	\begin{center}
    \begin{overpic}[scale=0.5]{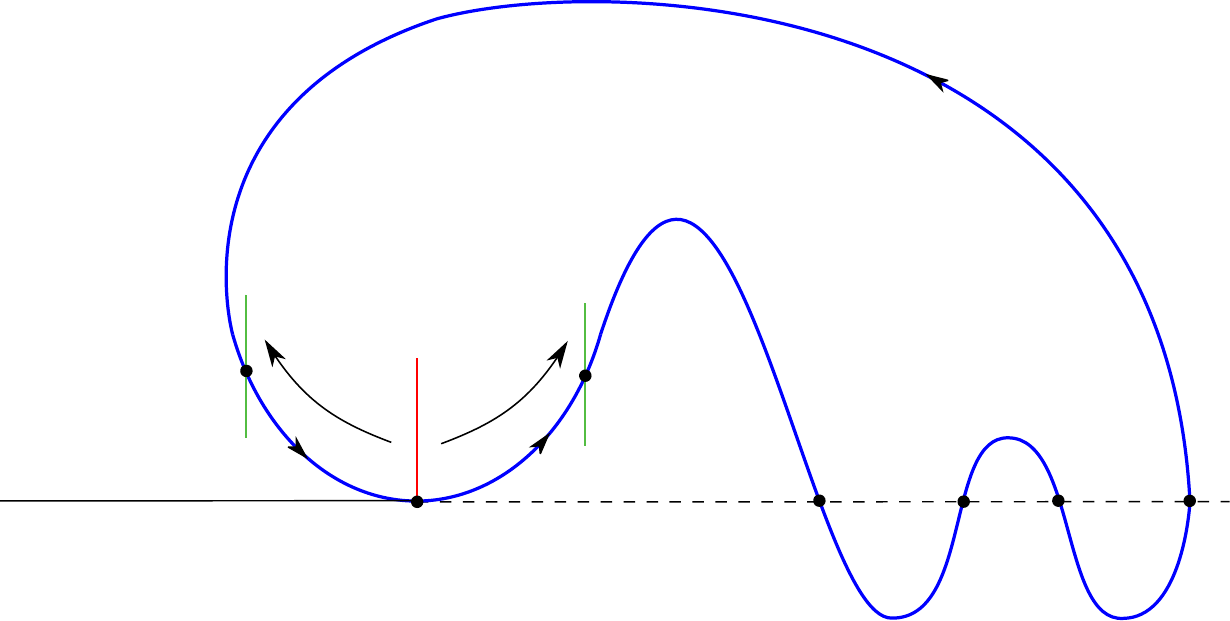}
		\put(33,6){$p$}
		\put(82,6){$q_3$}
		\put(91,6){$q_4$}
		\put(63,6){$q_1$}
		\put(72,6){$q_2$}
		\put(26,19){$T^s$}
		\put(38,19){$T^u$}
    \put(19,28){$\tau^s_{t}$}
		 \put(45,28){$\tau^u_{t}$}
		\put(99,11){$\Sigma$}
		\put(91,36){$\Gamma$}
	    \put(49,19){\scriptsize $q^u$ \par}
	    \put(16,19){\scriptsize $q^s$ \par}
		\end{overpic}
		\caption{\tiny{$\Sigma-$polycycle $\Gamma$ of type $(a)$ satisfying $(a.1),$ $(a.2),$ and $(a.3)$.}}
	\label{figtan1}
	\end{center}
	\end{figure}
	
Now, from Definition \ref{defpoly}, we know that there exists a first return map $\pi_\Gamma$ defined, at least, in one side of $\Gamma.$ In what follows, we shall see that there exist constants $\eta>0$ and $K>0$ such that the first return map $\pi_\Gamma:\sigma_p\rightarrow\sigma_p,$ with $\sigma_p=\{0\}\times[0,\eta),$ is given by 
	\begin{equation}\label{piK}
	\pi_\Gamma(y)=Ky+\CO(y^2).
	\end{equation}
Indeed, expanding $T^{u,s}$ around $y=0,$ we have 
\begin{equation}\label{Tsud}
\begin{split}
T^{s}(y)&=\ov{y}_{-\rho}+\kappa_{\rho}^{s}y+\CO(y^{2}) \quad\text{and}\\
T^{u}(y)&=\ov{y}_\T+\kappa_{\T}^{u}y+\CO(y^{2}),\\
\end{split}
\end{equation}
with $\kappa_{\rho,\T}^{s,u}=\frac{d T^{s,u}}{dy}(0).$ In addition, using \eqref{Di} and the \textit{Implicit Function Theorem}, we get
\begin{equation}\label{Dinv1}
D^{-1}(y)=\ov{y}_\T+\frac{1}{r_{\T,\rho}}(y-\ov{y}_{-\rho})+\CO((y-\ov{y}_{-\rho})^2).
\end{equation}
Since $X^+$ and $X^-$ are planar vector fields, the uniqueness of solutions implies that  $r_{\T,\rho}, \kappa_{\rho,\T}^{s,u}>0$  for all $\T,\rho$ sufficiently small.
Hence, we can define the first return map $\pi_\Gamma:\sigma_p\longrightarrow\sigma_p$ by 
	$$\pi_\Gamma(y)=(D^{-1}\circ T^s)^{-1}\circ T^u(y).$$ 
	From \eqref{Tsud} and \eqref{Dinv1}, we obtain $D^{-1}\circ T^s(y)=\ov{y}_\T+\frac{\kappa_{\rho}^s}{r_{\T,\rho}}y+\CO(y^{2}).$ Thus, using the \textit{Implicit Function Theorem}, we have that
	\begin{equation}\label{DTinv1}
	(D^{-1}\circ T^s)^{-1}(y)=\frac{r_{\T,\rho}}{\kappa_\rho^s}(y-\ov{y}_\T)+\CO((y-\ov{y}_\T)^2).
	\end{equation}
	Therefore, using \eqref{Tsud} and \eqref{DTinv1} we conclude that 
	\begin{equation}\label{pinK}
	\pi_\Gamma(y)=\frac{r_{\T,\rho}\kappa_\T^u}{\kappa_\rho^s}y+\CO(y^2).
	\end{equation}
	Consequently, taking $K:=\frac{r_{\T,\rho}\kappa_\T^u}{\kappa_\rho^s}>0,$ we get \eqref{piK}.

Now, since $D$ is a diffeomorphism induced by a regular orbit, we can easily see that the regularized system  $Z_{\e}^{\Phi}$ also admits an exterior map  $D_\e:\tau^{u}_t\longrightarrow\tau^{s}_t$. If we denote $S:=\frac{\partial D_\e(0)}{\partial\e}\Big|_{\e=0},$ then
\begin{equation}\label{Teps}
D_\e(y)=D(y)+S\e+\CO(\e^2,\e y),
\end{equation}
See Appendix A for more details about the exterior map and how to estimate $S$.

In what follows, we state our first main result, which will be proven in Section \ref{sec:prooftc1}.
\begin{mtheorem}\label{tc1}
Consider a Filippov system $Z=(X^+,X^-)_{\Sigma}$ and assume that $Z$ has a $\Sigma-$polycycle $\Gamma$ of type $(a)$ satisfying $(a.1),$ $(a.2),$ and $(a.3)$. For $n\geq 2k-1,$ let $\Phi\in C^{n-1}_{ST},K,S$ be given as $\eqref{Phi},$ $\eqref{piK}$ and $\eqref{Teps}$ respectively and consider the regularized system $Z_{\e}^{\Phi}$ \eqref{regula}. If $K+S-1\neq 0,$ then the following statements hold:
\begin{enumerate}
	\item[(a)] Given $0<\la<\la^*=\frac{n}{1+2k(n-1)},$ if $K+S-1>0$, then there exists $\rho>0$ such that  the regularized system $Z_{\e}^{\Phi}$ does not admit limit cycles passing through the section  $\widehat H_{\rho,\la}^{\e}=[-\rho,-\e^{\la}]\times\{\e\},$ for $\e>0$ sufficiently small.
	\item[(b)] Given $\frac{1}{2k}<\la<\la^*=\frac{n}{1+2k(n-1)},$ if $K+S-1<0$, then there exists $\rho>0$ such that the regularized system $Z_{\e}^{\Phi}$ admits a unique limit cycle $\Gamma_{\e}$ passing through the section $\widehat H_{\rho,\la}^{\e}=[-\rho,-\e^{\la}]\times\{\e\},$ for $\e>0$ sufficiently small. Moreover, $\Gamma_{\e}$ is asymptotically stable and $\e$-close to $\Gamma.$ 
\end{enumerate}
\end{mtheorem}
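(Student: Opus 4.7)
The plan is to construct the first return map $P_\e$ of the regularized vector field $Z_\e^\Phi$ on the horizontal section $\widehat H_{\rho,\la}^\e = [-\rho,-\e^\la]\times\{\e\}$ and analyze its fixed points. A trajectory of $Z_\e^\Phi$ starting at $(x,\e)\in \widehat H_{\rho,\la}^\e$ first traverses the regularization strip $\{|y|\le \e\}$ near the tangential singularity $p$; by \cite{NovRon2019}, this passage is described by the Lower and Upper Transition Maps $L_\e$ and $U_\e$, whose asymptotic expansions are available for $n\ge 2k-1$ as long as the entry point stays at distance at least $\e^\la$ from $p$. After emerging, the orbit shadows the regular part of $\Gamma$, crosses $\Sigma$ transversally at $q_1,\dots,q_m$, and returns to $\widehat H_{\rho,\la}^\e$; this portion is encoded by the perturbed exterior map $D_\e$ from \eqref{Teps}. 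The return map $P_\e$ is then the composition of these pieces.

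I would then derive an asymptotic expansion of the displacement $\Delta_\e(x) := P_\e(x) - x$. Composing the expansions of the unperturbed first return $\pi_\Gamma$ in \eqref{piK}, the perturbed exterior map $D_\e$ in \eqref{Teps}, and the analogous expansions of $U_\e,L_\e$ from \cite{NovRon2019}, a careful tracking of leading terms should yield
\begin{equation*}
\Delta_\e(x) \;=\; (K-1)\,x \;+\; S\,\e \;+\; R(x,\e),
\end{equation*}
where the remainder satisfies $R(x,\e) = o(\e + |x|)$ uniformly on $\widehat H_{\rho,\la}^\e$ provided $\la<\la^*$. Fixed points of $P_\e$ thus correspond to approximate roots of $(K-1)x + S\e = 0$, i.e., $x_\e \sim \tfrac{S}{1-K}\,\e$.

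For statement (a), when $K+S-1>0$, the heuristic root $x_\e$ falls outside $\widehat H_{\rho,\la}^\e$ (or the displacement retains a strict sign on the section), and a uniform lower bound on $|\Delta_\e|$ rules out fixed points, hence limit cycles. For statement (b), when $K+S-1<0$, the candidate root lies inside the section precisely because $\la>1/(2k)$; the rescaling $x=\e\xi$ turns the displacement into $\e\bigl[(K-1)\xi + S + o(1)\bigr]$ whose $\xi$-derivative is $(K-1)\e+o(\e)\neq 0$, so the Implicit Function Theorem produces a unique smooth branch $x_\e = \tfrac{S}{1-K}\e + o(\e)$. The sign structure of $\Delta_\e$ on either side of $x_\e$ (equivalently, $P_\e'(x_\e) = K + o(1)$) yields asymptotic stability, and $\Gamma_\e$ is $\e$-close to $\Gamma$ since $x_\e = \mathcal{O}(\e)$ while the rest of the orbit shadows the regular pieces of $\Gamma$.

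The main obstacle, and the reason for the precise range $\tfrac{1}{2k}<\la<\la^*$, is the uniform control of the remainder $R(x,\e)$ across the full section. The expansions of $U_\e$ and $L_\e$ from \cite{NovRon2019} degenerate as the entry approaches the tangency at the characteristic scale $\e^{1/(2k)}$, which forces $\la > 1/(2k)$ so the section stays uniformly away from that singular regime; simultaneously, the finite $C^n$-regularity of $\Phi$ limits the order of the available expansions for $U_\e,L_\e$, and balancing this against the multiplicity $2k$ of the contact pins down the sharp upper bound $\la^* = n/(1+2k(n-1))$. Making these estimates quantitative — in particular, verifying that $R(x,\e)$ is strictly smaller than the leading $S\,\e$ contribution uniformly on $\widehat H_{\rho,\la}^\e$ — is the technical heart of the proof and is where the machinery of \cite{NovRon2019} is indispensable.
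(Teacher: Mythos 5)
Your skeleton (compose the transition maps of \cite{NovRon2019} with the perturbed exterior map $D_\e$ and look for fixed points of the resulting return map) is the same as the paper's, but the central quantitative claim — that the displacement expands as $\Delta_\e(x)=(K-1)x+S\e+o(\e+|x|)$, with $P_\e'\approx K$ — is not correct in the regime of Theorem~\ref{tc1}, and the conclusions you draw from it fail. Every orbit entering the strip through $\widehat H_{\rho,\la}^{\e}$ is attracted to the Fenichel manifold, so the Upper Transition Map sends the whole section into an interval of length $\CO(e^{-c/\e^q})$; the return map is therefore nearly \emph{constant}, with derivative $\CO(e^{-c/\e^q})$ rather than $K+o(1)$. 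Two concrete consequences: (i) your stability criterion is wrong — statement $(b)$ only assumes $K+S-1<0$, which is compatible with $K>1$, and in that case your argument would predict an unstable cycle, contradicting the theorem. The paper instead derives uniqueness and asymptotic stability from the Banach fixed point theorem, using the exponentially small Lipschitz constant of $\pi_\e=D_\e\circ U_\e$, with no hypothesis on $K$ alone. (ii) Your predicted root $x_\e\sim\frac{S}{1-K}\e$ is $\CO(\e)$, hence lies \emph{outside} $[-\rho,-\e^{\la}]$ for small $\e$ (since $\la<1$ gives $\e\ll\e^{\la}$), so your own expansion would wrongly rule out the fixed point in case $(b)$.

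What is actually decisive, and what is missing from your expansion, is the location of the (nearly constant) image relative to the \emph{endpoint} of the section. The paper parametrizes the return on the vertical section $\widehat V_{\rho,\la}^{\e}=\{-\rho\}\times[\e,y_{\rho,\la}^{\e}]$, where $y_{\rho,\la}^{\e}=\ov y_{-\rho}+\e+\beta\e^{2k\la}+\cdots$ with $\beta<0$, and reduces everything to the sign of
\[
\pi_\e(y)-y_{\rho,\la}^{\e}=\bigl(r_{x_\e,\rho}+S-1\bigr)\e-\beta\,\e^{2k\la}+\text{h.o.t.},\qquad r_{x_\e,\rho}\to K,
\]
so the theorem is a competition between the term $(K+S-1)\e$ and the strictly positive term $-\beta\e^{2k\la}$. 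This is the true origin of the threshold $\la=1/(2k)$: for $\la<1/(2k)$ the $\beta$-term dominates and the image always overshoots the section (no fixed point regardless of signs), while for $\la>1/(2k)$ the sign of $K+S-1$ decides whether the image lands inside (Brouwer gives existence, the exponential contraction gives uniqueness and stability) or outside (nonexistence). Your explanation of the range of $\la$ in terms of the expansions of $U_\e,L_\e$ ``degenerating'' misses this mechanism, and without the $\beta\e^{2k\la}$ term your case $(a)$ argument (``the displacement retains a strict sign'') cannot be completed. A minor point: only $U_\e$ and $D_\e$ are needed here; $L_\e$ enters in Theorem~\ref{tc2}.
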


\begin{remark}\label{remark1}
The results given by Theorem \ref{tc1} depend on the topological type of the regular-tangential singularity (see \cite[Remark 5]{NovRon2019}). That is, the hypotheses of Theorem \ref{tc1} implies that any neighborhood of the regular-tangential singularity has non-empty intersection with the attracting sliding region $\Sigma^s$ of $\Sigma,$ i.e., with the set of points $q\in\Sigma$ such that $X^+h(q)<0$ and $X^-h(q)>0$ (see Figure \ref{fig_s_e}(a)).

If, in hypotheses of Theorem \ref{tc1}, we suppose that $X^-$ points outwards $\Sigma$ at $p,$ i.e., $X^-h(p)<0,$  then any neighborhood of the regular-tangential singularity has non-empty intersection with the repelling sliding region $\Sigma^e$ of $\Sigma,$ i.e., with the set of points  $q\in\Sigma$ such that $X^+h(q)>0$ and $X^-h(q)<0$ (see Figure \ref{fig_s_e}(b)). In this context, a version of Theorem \ref{tc1} can be obtained by multiplying the vector field by $-1$. Consequently, the limit cycle obtained by $Z,$ under this assumption, would be unstable.
\begin{figure}[h]
	\begin{center}
		\begin{overpic}[scale=0.6]{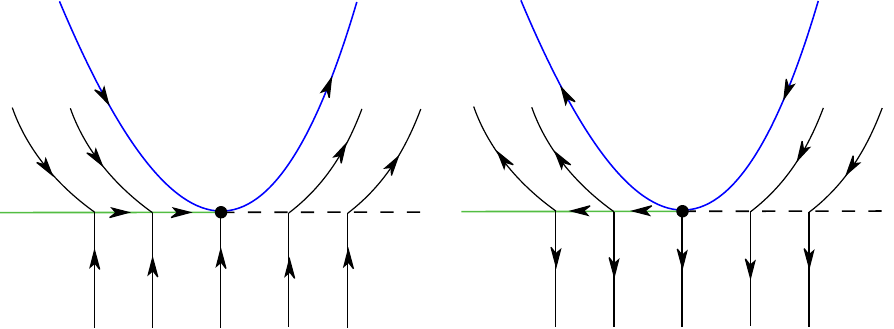}
	
\put(45,14){$\Sigma^c$}
\put(96,14){$\Sigma^c$}
\put(0,14){$\Sigma^s$}
\put(52,14){$\Sigma^s$}
	\put(24,-5){$(a)$}
	\put(76,-5){$(b)$}
		\end{overpic}
		\bigskip
		\caption{\tiny{Topological types of visible regular-tangential singularities of even multiplicity. In $(a)$, $X^-$ points towards $\Sigma$ at $p.$ In $(b)$, $X^-$ points outwards $\Sigma$ at $p.$}}
	\label{fig_s_e}
	\end{center}
	\end{figure}
\end{remark}

In order to prove this theorem, we shall first establish the relationship between the derivative of the first return map $K$ and the derivative of exterior map $r_{\T,\rho}$ as follows.

\begin{lemma}\label{rTR}
Consider $r_{\T,\rho}$ given as in \eqref{Di}. Then, $\lim\limits_{\T,\rho \to 0}r_{\T,\rho}=K.$
\end{lemma}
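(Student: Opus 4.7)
The plan is to exploit the fact that $K=\pi_\Gamma'(0)$ is an intrinsic invariant of the polycycle $\Gamma$ and the Poincar\'{e} section $\sigma_p$: by the very definition of a first return map, the composition $\pi_\Gamma=(T^s)^{-1}\circ D\circ T^u$ yields the same map $\sigma_p\to\sigma_p$ no matter how the auxiliary sections $\tau^u_t,\tau^s_t$ are chosen. Taking derivatives at the origin in the decomposition produced in \eqref{pinK} gives
\[
K=\dfrac{r_{\T,\rho}\,\kappa^u_\T}{\kappa^s_\rho}\qquad \text{for every admissible}\ \T,\rho,
\]
so the left-hand side is constant in $\T$ and $\rho$. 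The lemma will therefore follow once I show that $\kappa^u_\T\to 1$ as $\T\to 0^+$ and $\kappa^s_\rho\to 1$ as $\rho\to 0^+$, since then $r_{\T,\rho}=K\kappa^s_\rho/\kappa^u_\T\to K$. Note that this isolates the analysis to a neighborhood of $p$ under the flow of $X^+$; the long part of $D$ that crosses $\Sigma$ at $q_1,\ldots,q_m$ is absorbed entirely into $r_{\T,\rho}$ and plays no explicit role.

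\textbf{Key steps.} Denote the flow of $X^+$ by $\f_{X^+}(t,(x_0,y_0))=(\f_1(t,x_0,y_0),\f_2(t,x_0,y_0))$. From the definition, $T^s(y)=\f_2(\tau_\rho(y),0,y)$, where $\tau_\rho(y)$ is determined implicitly by $\f_1(\tau_\rho(y),0,y)=-\rho$. Hypothesis $(a.1)$, $X_1^+(p)>0$, ensures that $\tau_\rho$ is a well-defined $C^{2k}$ function of $(y,\rho)$ in a neighborhood of the origin and that $\tau_\rho(0)\to 0$ as $\rho\to 0^+$. Implicit differentiation at $y=0$ gives
\[
\tau_\rho'(0)=-\dfrac{\partial_y\f_1(\tau_\rho(0),0,0)}{X_1^+(q^s)},\qquad \kappa^s_\rho=X_2^+(q^s)\,\tau_\rho'(0)+\partial_y\f_2(\tau_\rho(0),0,0).
\]
Using the initial-condition identities $\partial_y\f_1(0,0,0)=0$ and $\partial_y\f_2(0,0,0)=1$, together with the visible-tangency condition $X_2^+(p)=X^+h(p)=0$ (the contact has multiplicity $2k\ge 2$), letting $\rho\to 0^+$ makes $q^s\to p$ and $\tau_\rho(0)\to 0$, so each factor above converges and one obtains $\kappa^s_\rho\to 1$. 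The same argument, with $\T$ in place of $-\rho$, yields $\kappa^u_\T\to 1$. Plugging back into $K=r_{\T,\rho}\kappa^u_\T/\kappa^s_\rho$ completes the proof.

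\textbf{Main obstacle.} The only delicate point is handling the cross term $X_2^+(q^s)\,\tau_\rho'(0)$, which is a priori an indeterminate form of type $0\cdot\,?$. This is controlled by noting that $\tau_\rho'(0)$ actually remains bounded as $\rho\to 0^+$: its numerator $\partial_y\f_1(\tau_\rho(0),0,0)$ tends to $0$ and its denominator $X_1^+(q^s)$ is bounded below by $X_1^+(p)/2>0$ for small $\rho$, by hypothesis $(a.1)$. Hence the product vanishes in the limit, and every other estimate used is a routine application of smooth dependence of the flow on initial data.
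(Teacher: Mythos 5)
Your proof is correct and follows essentially the same route as the paper: it uses the identity $K=r_{\T,\rho}\kappa^u_\T/\kappa^s_\rho$ coming from the decomposition of $\pi_\Gamma$, and then shows $\kappa^u_\T,\kappa^s_\rho\to 1$ by applying the Implicit Function Theorem to the flow of $X^+$ and differentiating, with the cross term vanishing because $\partial_y\f_1(0,0,0)=0$ while $X_1^+(p)>0$. The only cosmetic difference is that you invoke $X^+h(p)=0$ to kill that term, whereas it already vanishes since $\tau_\rho'(0)\to 0$; this does not affect the validity of the argument.
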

\begin{proof}
Using equations \eqref{piK} and \eqref{pinK} we get for $\T,\rho>0$ small enough that
$$r_{\T,\rho}=\frac{K\kappa_\rho^s}{\kappa_\T^u}.$$
To prove this lemma, we just need to prove that $\lim\limits_{\T,\rho \to 0}\frac{\kappa_\rho^s}{\kappa_\T^u}=1,$ because in this case we have 
\[\begin{array}{rl}
\lim\limits_{\T,\rho \to 0}r_{\T,\rho}=&\lim\limits_{\T,\rho \to 0}\frac{K\kappa_\rho^s}{\kappa_\T^u}\\
=&K\lim\limits_{\T,\rho \to 0}\frac{\kappa_\rho^s}{\kappa_\T^u}\\
=&K.
\end{array}\]
For this, we shall prove that the flow of $X^+$ induces $\mathcal{C}^{2
k}$ maps,
$$\la_\rho^s:\sigma'_p\subset\sigma_p\longrightarrow\tau^s_t\quad\text{and}\quad \la_\T^u:\sigma'_p\subset\sigma_p\longrightarrow \tau^u_t,$$
between the transversal sections defined in \eqref{tsec} and satisfying $\la_\rho^s(0)=q^s$ and $\la_\T^u(0)=q^u$, respectively. Indeed, consider the functions 
$$\mu_{1}(t,y,\T)=\varphi^1_{X^+}(t,0,y)-\T, \hspace{0.2cm}\text{for} \hspace{0.2cm} (0,y)\in \sigma_p, \hspace{0.2cm} t\in I_{(0,y)},$$ and 
$$\mu_{2}(t,y,\rho)=\varphi^1_{X^+}(t,0,y)+\rho,\hspace{0.2cm}\text{for} \hspace{0.2cm} (0,y)\in \sigma_p,\hspace{0.2cm} t\in I_{(0,y)},$$ where $\varphi_{X^+}=(\varphi^1_{X^+},\varphi^2_{X^+})$ is the flow of $X^+$ and $I_{(0,y)}$ is the maximal interval of existence of $t\mapsto\varphi_{X^+}(t,0,y)$. Since, 
$$\mu_{1}(0,0,0)=0=\mu_{2}(0,0,0) \, \text{and}$$  
$$\frac{\partial \mu_{1,2}}{\partial t}(0,0,0)=\frac{\partial \varphi^1_{X^+}}{\partial t}(0,0,0)=X_1^+(p)\neq 0,$$ 
by the \textit{Implicit Function Theorem} there exist $\eta_0>0$ and smooth functions $t_{1}(y,\T)$ and $t_{2}(y,\rho),$ with $(0,y)\in\sigma'_p:=\{0\}\times[0,\eta_0)\subset\sigma_p$ and $\T,\rho>0$ sufficiently small, such that 
$$t_1(0,0)=0=t_2(0,0),$$ 
$$\mu_{1}(t_{1}(y,\T),y,\T)=0\quad \text{and} \quad\mu_{2}(t_{2}(y,\rho),y,\rho)=0,$$ 
i.e. $\varphi^1_{X^+}(t_1(y,\T),0,y)=\T$ and $\varphi^1_{X^+}(t_2(y,\rho),0,y)=-\rho.$ Thus, we can define the functions 
$$
\la_\T^u(y)=\varphi^2_{X^+}(t_{1}(y,\T),0,y)\quad \text{and}\quad\la_\rho^s(y)=\varphi^2_{X^+}(t_{2}(y,\rho),0,y).
$$
Notice that 
$$\frac{d\la_\T^u}{dy}(0)=\frac{\partial \varphi^2_{X^+}}{\partial t}(t_{1}(0,\T),0,0)\frac{\partial t_1}{\partial y}(0,\T)+\frac{\partial \varphi^2_{X^+}}{\partial y}(t_{1}(0,\T),0,0),$$ and 
$$\frac{d\la_\rho^s}{dy}(0)=\frac{\partial \varphi^2_{X^+}}{\partial t}(t_{2}(0,\rho),0,0)\frac{\partial t_2}{\partial y}(0,\rho)+\frac{\partial \varphi^2_{X^+}}{\partial y}(t_{2}(0,\rho),0,0).$$
Since, 
$$\frac{\partial t_1}{\partial y}(0,0)=-\frac{\frac{\partial \varphi^1_{X^+}}{\partial y}(0,0,0)}{\frac{\partial \varphi^1_{X^+}}{\partial t}(0,0,0)}, \quad \frac{\partial t_2}{\partial y}(0,0)=-\frac{\frac{\partial \varphi^1_{X^+}}{\partial y}(0,0,0)}{\frac{\partial \varphi^1_{X^+}}{\partial t}(0,0,0)},$$
and $$\frac{\partial \varphi^1_{X^+}}{\partial y}(0,0,0)=0,\, \frac{\partial \varphi^2_{X^+}}{\partial y}(0,0,0)=1,\, \kappa_{\T,\rho}^{s,u}=\frac{d T^{s,u}\Big|_{\sigma'_p}}{dy}(0)=\frac{d \la_{\rho,\T}^{s,u}}{dy}(0),$$
we get that
\begin{equation}\label{LT}\begin{array}{rl}\lim\limits_{\T \to 0} \kappa^u_\T=&\lim\limits_{\T \to 0}\frac{d\la_\T^u}{dy}(0)\\
 =& \frac{\partial \varphi^2_{X^+}}{\partial t}(t_{1}(0,0),0,0)\frac{\partial t_1}{\partial y}(0,0)+\frac{\partial \varphi^2_{X^+}}{\partial y}(t_{1}(0,0),0,0)\\ 
=&  1,
\end{array}\end{equation} and 
\begin{equation}\label{LR}\begin{array}{rl}\lim\limits_{\rho \to 0} \kappa^s_\rho=& \lim\limits_{\rho \to 0}\frac{d\la_\rho^s}{dy}(0) \\
 = & \frac{\partial \varphi^2_{X^+}}{\partial t}(t_{2}(0,0),0,0)\frac{\partial t_2}{\partial y}(0,0)+\frac{\partial \varphi^2_{X^+}}{\partial y}(t_2(0,0),0,0)\\ 
= &  1.
\end{array}\end{equation} 
The result follows from \eqref{LT} and \eqref{LR}.
\end{proof}

\subsection{Proof of Theorem \ref{tc1}}\label{sec:prooftc1}

		 First, from Theorem \ref{ta1} of Appendix B for $\T=x_\e$, there exist $\rho_0>0$ and constants $\beta<0$ and $c,r,q>0$ such that for every $\rho\in(\e^\la,\rho_0],$ $\la\in(0,\la^*),$ and $\e>0$ sufficiently small, the flow of $Z_{\e}^{\Phi}$ defines a map $U_{\e}$ between the transversal sections $\widehat V_{\rho,\la}^{\e}=\{-\rho\}\times [\e,y_{\rho,\la}^{\e}]\,\,\text{ and }\,\, \widetilde V_{x_\e}^{\e}=\{x_\e\}\times[y_{x_\e}^\e,y_{x_\e}^\e+r e^{-\frac{c}{\e^q}}]$ satisfying
\begin{equation}\label{Ue}
\begin{array}{cccl}
U_{\e}:& \widehat V_{\rho,\la}^{\e}& \longrightarrow& \widetilde V_{x_\e}^{\e}\\
&y&\longmapsto&y_{x_\e}^{\e}+\CO(e^{-\frac{c}{\e^q}}),
\end{array}
\end{equation}
where $y_{x_\e}^{\e} =\ov y_{x_\e}+\e+\mathcal{O}(\e^{2k\la^*})$ (see Figure \ref{figMAP1}).
\begin{figure}[h]
	\begin{center}
		\begin{overpic}[scale=0.9]{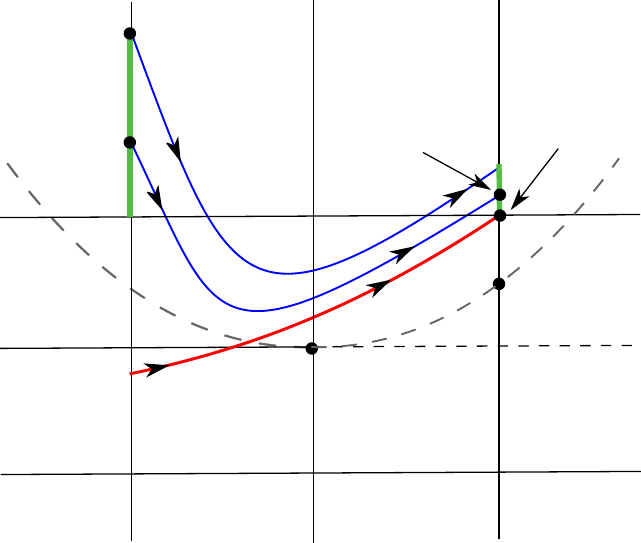}
		\put(11,70){$\widehat V_{\rho,\la}^{\e}$}
		\put(78,59){$\widetilde V_{x_\e}^{\e}$}
		\put(68,-3){$x=\T=x_\e$}
		\put(15,-3){$x=-\rho$}
		\put(98,32){$\Sigma$}
		\put(98,53){$y=\e$}
		\put(98,13){$y=-\e$}
		\put(87,64){$(x_\e,y_{x_\e}^{\e})$}
		\put(79,37){$\ov{y}_{x_\e}$}
		\put(55,61){$U_{\e}(y)$}
		\put(16,62){$y$}
		\put(12,79){$y^\e_{\rho,\la}$}
		\end{overpic}
	\end{center}	
	\bigskip	
	\caption{\tiny{Upper transition map $U_{\e}$ of the regularized system $Z^\Phi_\e.$ The dotted curve is the trajectory of $X^+$ passing through the visible regular-tangential singularity $p$ and the red curve is the well-known Fenichel manifold.}}
	\label{figMAP1}
	\end{figure}
		
		Now, notice that there exists $\e_0>0$ such that $\widetilde V_{x_\e}^{\e}\subset \tau_t^u,$ for all $\e\in[0,\e_0].$ 
		In this way, for $\e\in[0,\e_0],$ we define the function $\pi_\e(y)=D_\e\circ U_{\e}(y).$ Hence, from \eqref{Teps} and \eqref{Ue}, we get
		\begin{equation}\label{Piea}
		\begin{array}{lllll} \pi_{\e}(y) 
		& = &\displaystyle D_\e\Big(y^\e_{x_\e}+\mathcal{O}(e^{-c/\e^q})\Big)\\
		& = &\displaystyle D\Big(\overline{y}_{x_\e}+\e+\mathcal{O}\Big(\e^{2k\la^*}\Big)\Big)+\e S+\CO(\e^2)\\
		& = &\displaystyle \overline{y}_{-\rho}+r_{x_\e,\rho}\Big(\e+\mathcal{O}\Big(\e^{2k\la^*}\Big)\Big)+\mathcal{O}\Big(\e+\mathcal{O}\Big(\e^{2k\la^*}\Big)\Big)^2+\e S+\CO(\e^2)\\
		& = &\displaystyle \overline{y}_{-\rho}+\Big(r_{x_\e,\rho}+S\Big)\e+\mathcal{O}\Big(\e^{2k\la^*}\Big).\\
		\end{array}
		\end{equation}
Using \eqref{Piea} and \eqref{ye}, we have
		 $$\pi_\e(y)-y^\e_{\rho,\la}=\left(r_{x_\e,\rho}+S-1\right)\e+\mathcal{O}(\e\rho)-\beta \e^{2k\la}+\mathcal{O}(\e^{(2k+1)\la})+\mathcal{O}(\e^{1+\la})+\mathcal{O}\Big(\e^{2k\la^*}\Big).$$ Hence, we must study the limit $\lim\limits_{\rho,\e\rightarrow 0}\frac{\pi_\e(y)-y^\e_{\rho,\la}}{\e}$ in three distinct cases.

	 First, assume that $\la>\frac{1}{2k}.$ Then,
		$$\frac{\pi_\e(y)-y^\e_{\rho,\la}}{\e}=r_{x_\e,\rho}+S-1+\mathcal{O}(\rho)-\beta \e^{2k\la-1}+\mathcal{O}(\e^{(2k+1)\la-1})+\mathcal{O}(\e^{\la})+\mathcal{O}\Big(\e^{2k\la^*-1}\Big).$$ Thus, by Lemma \ref{rTR},
		\begin{equation}\label{eql1}	
		\lim\limits_{\rho,\e\rightarrow 0}\frac{\pi_\e(y)-y^\e_{\rho,\la}}{\e}=K+S-1.\end{equation}
		
		Now, suppose that $\la<\frac{1}{2k}.$ Then,
		$$\frac{\pi_\e(y)-y^\e_{\rho,\la}}{\e^{2k\la}}=(r_{x_\e,\rho}+S-1)\e^{1-2k\la}+\mathcal{O}(\e^{1-2k\la}\rho)-\beta+\mathcal{O}(\e^{\la})+\mathcal{O}\Big(\e^{2k(\la^*-\la)}\Big).$$ Hence, by Lemma \ref{rTR},
		\begin{equation}\label{eql2}	
		\lim\limits_{\rho,\e\rightarrow 0}\frac{\pi_\e(y)-y^\e_{\rho,\la}}{\e^{2k\la}}=-\beta>0.\end{equation}
		
		Finally,  assume that $\la=\frac{1}{2k}.$ Then, 
		$$\frac{\pi_\e(y)-y^\e_{\rho,\la}}{\e}=r_{x_\e,\rho}+S-1-\beta+\mathcal{O}(\rho)+\mathcal{O}(\e^{\la})+\mathcal{O}\Big(\e^{2k\la^*-1}\Big).$$ Thus, by Lemma \ref{rTR},
		\begin{equation}\label{eql3}	
		\lim\limits_{\rho,\e\rightarrow 0}\frac{\pi_\e(y)-y^\e_{\rho,\la}}{\e}=K+S-1-\beta.\end{equation}
	
		Now, we prove statement $(a)$ of Theorem \ref{tc1}. As $K+S-1>0,$ then all the above limits \eqref{eql1}, \eqref{eql2} and \eqref{eql3}, are strictly positive and, since $\e>0,$ there exists $\delta_0>0$ such that 
		$$0<\rho,\e<\delta_0\hspace{0.1cm}\Rightarrow\hspace{0.1cm}\pi_\e(y)-y^\e_{\rho,\la}>0.$$		
		Therefore, $\pi_\e([\e,y^\e_{\rho,\la}])\cap[\e,y^\e_{\rho,\la}]=\emptyset,$ for all $\e\in(0,\delta_0).$ This means that $\pi_\e$ has no fixed points in $[\e,y^\e_{\rho,\la}]$, i.e. the regularized system $Z_{\e}^{\Phi}$ does not admit limit cycles passing through the section $\widehat H_{\rho,\la}^{\e}.$
		
		 Now, we prove statement $(b)$ of Theorem \ref{tc1}. In this case, $\la>\frac{1}{2k}.$  As $K+S-1<0,$ then the limit \eqref{eql1} is strictly negative and, since $\e>0,$  there exists $\delta_0>0$ such that $0<\rho,\e<\delta_0\hspace{0.1cm}\Rightarrow\hspace{0.1cm}\pi_\e(y)-y^\e_{\rho,\la}<0.$ Consequently, $\pi_\e(y)<y^\e_{\rho,\la}.$ Moreover, from \eqref{Piea}, we get
		\begin{equation*}\label{eql4}	
		\lim\limits_{\e\rightarrow 0}\pi_\e(y)-\e=\ov{y}_{-\rho}>0,\end{equation*}
		for all $\rho>0.$ Since $\e>0,$  there exists $\delta_1>0$ such that 
		$0<\e<\delta_1\hspace{0.1cm}\Rightarrow\hspace{0.1cm}\pi_\e(y)-\e>0.$ Accordingly, $\pi_{\e}(y)>\e,$ for $\e>0$ sufficiently small. This means that $\pi_\e([\e,y^\e_{\rho,\la}])\subset[\e,y^\e_{\rho,\la}].$  By the {\it Brouwer Fixed Point Theorem}, we can conclude that $\pi_\e$ admits fixed points in $[\e,y^\e_{\rho,\la}]$, i.e. the regularized system $Z_{\e}^{\Phi}$ has periodic orbits passing through the section $\widehat H_{\rho,\la}^{\e}.$	
		
		In what follows, we shall prove the uniqueness of the fixed point in  $[\e,y^\e_{\rho,\la}].$ Indeed, expanding $D_\e$ in Taylor series around $y=y_{x_\e}^\e,$ we have that
		$$D_\e(y)=D_\e(y_{x_\e}^\e)+\frac{dD_\e}{dy}(y_{x_\e}^\e)(y-y_{x_\e}^\e)+\mathcal{O}((y-y_{x_\e}^\e)^2).$$ Hence,
		$$\begin{array}{lllll} \pi_{\e}(y) &= & D_\e(y_{x_\e}^\e+\mathcal{O}(e^{-c/\e^q}))\\
		& = &\displaystyle D_\e(y_{x_\e}^\e)+\frac{dD_\e}{dy}(y_{x_\e}^\e)\mathcal{O}(e^{-c/\e^q})+\mathcal{O}(e^{-2c/\e^q})\\
		& = &\displaystyle D_\e(y_{x_\e}^\e)+\mathcal{O}(e^{-c/\e^q}),\\
		\end{array}$$ and, therefore, $|\pi_\e(y_1)-\pi_\e(y_2)|=\mathcal{O}(e^{-c/\e^q}),$ for all $y_1,y_2\in[\e,y^\e_{\rho,\la}].$ Now, let $\nu_{\e}$ be the function given by
		$$\begin{array}{rcl} \nu_{\e}:  [\e,y^\e_{\rho,\la}]&\longrightarrow & [0,1]\\ 
		y &\longmapsto & \displaystyle\frac{y-\e}{y^\e_{\rho,\la}-\e}.\\ 
	\end{array}$$ Notice that $\nu_{\e}^{-1}(u)=(y^\e_{\rho,\la}-\e)u+\e.$ Thus, if $\widetilde{\pi}_\e(u)=\pi_\e\circ\nu_\e^{-1}(u),$ then $$|\widetilde{\pi}_\e(u_1)-\widetilde{\pi}_\e(u_2)|=\mathcal{O}(e^{-c/\e^q}),$$ for all $u_1,u_2\in[0,1].$ Fix $l\in(0,1),$ take $u_1,u_2\in[0,1],$ and define the function $\ell(\e):=(y^\e_{\rho,\la}-\e)l.$  There exists $\e(u_1,u_2)>0$ and a neighborhood $U(u_1,u_2)\subset[0,1]^2$ of $(u_1,u_2)$  such that 
$$|\widetilde{\pi}_\e(x)-\widetilde{\pi}_\e(y)|<\ell(\e)|x-y|,$$ for all $(x,y)\in U(u_1,u_2)$ and $\e\in(0,\e(u_1,u_2)).$ Since $\{U(u_1,u_2):\,(u_1,u_2)\in [0,1]^2\}$ is an open cover  of the compact set $[0,1]^2,$ there exists a finite sequence  $(u^i_1,u^i_2)\in[0,1]^2,$ $i=1,\ldots,s,$ for which $\{U^i\defeq U(u_1^i,u_2^i):\,i=1,\ldots,s\}$ still covers $[0,1]^2.$
Taking $\breve{\e}=\min\{\e(u^i_1,u^i_2):i=1,\ldots,s\},$ we get
$$|\widetilde{\pi}_\e(x)-\widetilde{\pi}_\e(y)|<\ell(\e)|x-y|,$$ for all $\e\in(0,\breve{\e})$ and $(x,y)\in[0,1]^2.$
Finally, since $\pi_{\e}(z)=\widetilde{\pi}_\e\circ\nu(z),$ we have that
$$\begin{array}{lllll} |\pi_\e(x)-\pi_\e(y)| &= & |\widetilde{\pi}_\e\circ\nu_\e(x)-\widetilde{\pi}_\e\circ\nu_\e(y)|\\
& < & \ell(\e)|\nu_\e(x)-\nu_\e(y)|\\
& = &\displaystyle \displaystyle\frac{\ell(\e)}{y^\e_{\rho,\la}-\e}|x-y|\\
& = &\displaystyle l|x-y|,\\
\end{array}$$ for all $\e\in(0,\breve{\e})$ and $x,y\in[\e,y_{\rho,\la}^{\e}].$ Therefore, $\pi_\e$ is a contraction for $\e>0$ small enough. From the \textit{Banach Fixed Point Theorem}, $\pi_\e$ admits a unique asymptotically stable fixed point for $\e>0$ small enough. Therefore, the regularized system $Z_{\e}^{\Phi}$ has a unique asymptotically stable limit cycle $\Gamma_{\e}$ passing through the section $\widehat H_{\rho,\la}^{\e},$ for $\e>0$ sufficiently small. Moreover, since $\pi_\e(y)-\ov{y}_{-\rho}=\mathcal{O}(\e)$ for all $y\in[\e,y_{\rho,\la}^{\e}]$ and $x_{\e}-\ov{x}^{+}_{\e}=\mathcal{O}(\e^{\frac{1}{2k}}),$ we get from differentiable dependency results on parameters and initial condition that $\Gamma_\e$ is $\e$-close to $\Gamma.$

\subsection{ Cases of uniqueness and nonexistence of limit cycles}\label{sec:nonexistence}

In the previous section, Theorem \ref{tc1} guaranteed the nonexistence and uniqueness of limit cycles in a specific compact set with nonempty interior. Nevertheless, it is not ensured, in general, the nonexistence and uniqueness of limit cycles converging to $\Gamma$, because this compact set degenerates into $\Gamma$ when $\e\rightarrow 0$. However, if we suppose, in addition, that $X^+$ has locally a unique isocline $x=\psi(y)$ of $2k-$multiplicity contacts with the straight lines $y=cte,$ then  we can guarantee the nonexistence and uniqueness of limit cycles converging to $\Gamma$. More precisely, consider the following proposition.

\begin{proposition}\label{proptc1}
Let $Z=(X^+,X^-)_{\Sigma}$ be a Filippov system and assume that  $Z$ has a $\Sigma-$polycycle $\Gamma$ of type $(a)$ satisfying $(a.1),$ $(a.2),$ and $(a.3).$ For $n\geq 2k-1,$ let $\Phi\in C^{n-1}_{ST},K,S$ be given as $\eqref{Phi},$ $\eqref{piK}$ and $\eqref{Teps}$ respectively and consider the regularized system $Z_{\e}^{\Phi}$ \eqref{regula}. If $K+S-1\neq 0$ and $X^+$ has locally a unique isocline $x=\psi(y)$ of $2k-$multiplicity contacts with the straight lines $y=\e,$ then the following statements hold.
\begin{enumerate}
	\item[(a)] If $K+S-1>0$ and $K>1$, then for $\e>0$ sufficiently small the regularized system $Z_{\e}^{\Phi}$ does not admit limit cycles converging to $\Gamma.$
	
		\item[(b)] If $K+S-1<0$ and $K<1$, then for $\e>0$ sufficiently small the regularized system $Z_{\e}^{\Phi}$ admits a unique limit cycle $\Gamma_{\e}$ converging to $\Gamma.$ Moreover, $\Gamma_{\e}$ is hyperbolic and asymptotically stable.
\end{enumerate}

\end{proposition}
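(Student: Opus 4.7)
My plan is to upgrade the analysis of Theorem~\ref{tc1}, which controls limit cycles passing through the truncated horizontal section $\widehat H_{\rho,\la}^\e$, to a global statement about all limit cycles of $Z_\e^\Phi$ converging to $\Gamma$. The isocline hypothesis supplies the new key ingredient: it forces every trajectory of $Z_\e^\Phi$ that enters the strip $|y|\leq\e$ from above and emerges into $\Sigma^+$ again to do so at the unique $2k$-contact point $(x_\e,y_{x_\e}^\e)$ with $x_\e=\psi(\e)$, up to exponentially small corrections, independently of where it entered the strip.

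First I would enlarge the transversal section on $\{x=-\rho\}$ from the section $\widehat V_{\rho,\la}^\e=\{-\rho\}\times[\e,y_{\rho,\la}^\e]$ used in the proof of Theorem~\ref{tc1} to the maximal section $\{-\rho\}\times[\e,y_0^{\max}(\e)]$, where $y_0^{\max}(\e)$ is defined so that the $X^+$-trajectory starting at $(-\rho,y_0^{\max}(\e))$ is tangent to $y=\e$. The isocline hypothesis places this tangency at $(x_\e,\e)$, and differentiable dependence on initial data for the $C^{2k}$-flow of $X^+$ yields $y_0^{\max}(\e)=\ov{y}_{-\rho}+\e+o(\e)$. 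Next I would extend the Upper Transition Map $U_\e$ of Theorem~\ref{ta1} to this full section, using the isocline uniqueness to show that any trajectory of $Z_\e^\Phi$ emerging from the strip still lands in $\widetilde V_{x_\e}^\e$ up to an $\CO(e^{-c/\e^q})$ error. Composing with the exterior map $D_\e$ of \eqref{Teps} and replacing $r_{x_\e,\rho}$ by $K$ via Lemma~\ref{rTR}, I would obtain the uniform expansion
\begin{equation*}
P_\e(y)=\ov{y}_{-\rho}+(K+S)\e+\CO(\e^{2k\la^*})+\CO(e^{-c/\e^q}),\quad y\in[\e,y_0^{\max}(\e)],
\end{equation*}
for the Poincar\'e first return map $P_\e$ of $Z_\e^\Phi$ on the whole enlarged section.

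With this expansion in hand, part~(a) follows because $K+S-1>0$ forces the candidate fixed point $y_\e^*=\ov{y}_{-\rho}+(K+S)\e+o(\e)$ to lie strictly above $y_0^{\max}(\e)=\ov{y}_{-\rho}+\e+o(\e)$, hence outside the domain, while $K>1$ excludes additional fixed points of the first return map on $(y_0^{\max}(\e),\ov{y}_{-\rho}+\delta]$, where $Z_\e^\Phi$ coincides with $X^+$ and the return map is a perturbation of the expanding map $T^s\circ\pi_\Gamma\circ(T^s)^{-1}$. Part~(b) runs symmetrically: $K+S-1<0$ places $y_\e^*$ inside the enlarged section, so the Brouwer and contraction argument in the proof of Theorem~\ref{tc1}(b) gives existence, asymptotic stability, and---since the expansion forces $P_\e'=\CO(e^{-c/\e^q})$ uniformly on $[\e,y_0^{\max}(\e)]$---hyperbolicity of the unique fixed point in this section; the hypothesis $K<1$ then rules out any further fixed point on $(y_0^{\max}(\e),\ov{y}_{-\rho}+\delta]$ by the same $\pi_\Gamma$-perturbation argument, yielding global uniqueness among limit cycles converging to $\Gamma$.

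The main technical obstacle is the extension of $U_\e$ in the second step to entry points with $x\in(-\e^\la,0)$, where the Fenichel-type attraction to the slow manifold built in Appendix~B degenerates. The isocline uniqueness is what should rescue the analysis, forcing every nearby trajectory to be routed to the single exit point $(x_\e,y_{x_\e}^\e)$; making this quantitative while preserving the $\CO(e^{-c/\e^q})$ error will likely demand a separate blow-up or normal-form computation near the $2k$-contact.
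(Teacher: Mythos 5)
There is a genuine gap here, and it sits exactly where you flagged it --- but it is worse than a missing technical lemma: the uniform expansion you build your argument on is false in the critical regime. Your plan funnels \emph{every} trajectory that enters the band $|y|\leq\e$ to the single Fenichel exit point $(x_\e,y^\e_{x_\e})$ up to an $\CO(e^{-c/\e^q})$ error, so that the return map on the enlarged section $\{-\rho\}\times[\e,y_0^{\max}(\e)]$ is exponentially flat. But a trajectory entering the band at $(x,\e)$ with $x$ close to $\psi(\e)$ only grazes the band: it spends an $\e$-independent short "time" there, is \emph{not} attracted to the slow manifold, and re-emerges near the mirrored point $2\psi(\e)-x$ rather than at $(x_\e,y^\e_{x_\e})$. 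In that regime the return map is a perturbation of the unregularized return map and its derivative is close to $K$, not exponentially small. This is precisely why Theorem~\ref{ta1} restricts the entry section to $[-\rho,-\e^\la]\times\{\e\}$, and no blow-up computation will restore the exponential estimate there, because the statement being estimated is not true. Since the grazing regime is exactly where additional fixed points could hide (it interpolates between the Fenichel-dominated and the pure-$X^+$ regimes), your case analysis does not close.

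The paper's proof avoids this extension problem entirely. In Step 1 it uses the isocline hypothesis to build, via the transition maps $T^{u,s}_\e$ at the level $y=\e$ and the \emph{mirror map} $\rho_\e$ at $\psi(\e)$, a return map $\pi_\e$ on $\tau^s_{t,\e}$ whose derivative at any fixed point corresponding to a limit cycle converging to $\Gamma$ tends to $K$ (Step 2, using Lemma~\ref{rTR}); hence for $K>1$ (resp.\ $K<1$) every such limit cycle is hyperbolic and unstable (resp.\ asymptotically stable). Step 3 then converts this local information into the global statement by Poincar\'{e}--Bendixson: in case (a) a converging limit cycle together with the Fenichel manifold and $\{x=-\rho\}$ would bound a positively invariant annulus containing a non-unstable limit cycle, contradicting Step 2; in case (b) existence follows from Theorem~\ref{tc1}, and two converging limit cycles would bound a negatively invariant annulus containing a non-stable one, again contradicting Step 2. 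If you want to salvage your approach, you would have to replace the claimed uniform expansion by a case split matching the paper's: Fenichel-dominated entries (handled by Theorem~\ref{ta1}), grazing entries (handled by the mirror map, where the multiplier is $\approx K$), and trajectories staying above $y=\e$ --- at which point you have essentially reconstructed Steps 1--2 and still need an argument, such as the Poincar\'{e}--Bendixson one, to rule out cycles not passing through any fixed section.
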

\begin{remark}
In Proposition \ref{proptc1} we are assuming $(a.3),$ i.e. $X^-h(p)>0.$ If $X^-h(p)<0,$ Proposition \ref{proptc1} can be applied to $-Z$. Consequently, the unique limit cycle obtained for $Z,$ under the suitable assumptions, would be unstable. For more details see Remark \ref{remark1}.
\end{remark}
\begin{proof}
This proof will be split in 4 steps:

\textbf{Step 1.} First of all, we shall construct the first return map $\pi_\e$ of $Z_\e^\Phi,$ for $\e>0$. Since $X_1^+(p)>0,$ implies that there exists an open set $U,$ such that $X_1^+(x,y)\neq 0$, for all $(x,y)\in U.$ Take $\rho,\e>0$ small enough in order that the intersections of the trajectory of $Z_\e^\Phi$ starting at $(\psi(\e),\e)$ with the sections $\{x=x_\e\}$ and $\{x=-\rho\}$ are contained in $U$, namely $q_\e^{u}=(x_\e,\ov{y}^\e_{x_\e})$ and $q_\e^{s}=(-\rho,\ov{y}^\e_{-\rho})$, respectively. Thus, there exist $\delta_\e^{u,s}$ positive numbers, such that       
\[\begin{array}{l}
\tau^{u}_{t,\e}=\{(x_\e,y):y\in(\ov{y}^\e_{x_\e}-\delta_\e^{u},\ov{y}^\e_{x_\e}+\delta_\e^{u})\},\\
\tau^{s}_{t,\e}=\{(-\rho,y):y\in(\ov{y}^\e_{-\rho}-\delta_\e^{s},\ov{y}^\e_{-\rho}+\delta_\e^{s})\},\\
\end{array}\] are transversal sections of $X^+$. Moreover, notice that the arc-orbits connecting $(\psi(\e),\e)$ with $q_\e^u$ and $(\psi(\e),\e)$ with $q_\e^s$ are contained in $\Sigma^+$ (see Figure \ref{mirrormap1}).
Thus, by \cite[Theorem A]{AndGomNov19} we know that there exist the transition maps $R_\e^{u}:\sigma^+_{p,\e}:=[\psi(\e),x_\e]\times\{\e\}\longrightarrow\tau^{u}_{t,\e}$ and $R_\e^{s}:\sigma^-_{p,\e}:=[-\rho,\psi(\e)]\times\{\e\}\longrightarrow\tau^{s}_{t,\e}$ satisfying
\begin{equation}\label{newTsu2}
\begin{split}
R_\e^{u}(x)&=\ov{y}^\e_{x_\e}+\varsigma_{x_\e,\e}^{u}(x-\psi(\e))^{2k}+\CO\left((x-\psi(\e))^{2k+1}\right),\\
R_\e^{s}(x)&=\ov{y}^\e_{-\rho}+\varsigma_{\rho,\e}^{s}(x-\psi(\e))^{2k}+\CO\left((x-\psi(\e))^{2k+1}\right),\\
\end{split}
\end{equation}
where $\sgn(\varsigma_{x_\e,\e}^{u})=-\sgn((X^{+})^{2k}h(\psi(\e)))=\sgn(\varsigma_{\rho,\e}^{s})$, i.e. $\varsigma_{x_\e,\e}^u,\varsigma_{\rho,\e}^s<0.$ Furthermore, from \textit{Implicit Function Theorem}, it is easy to see  that
$$(R_\e^{s})^{-1}(y)=\psi(\e)-\sqrt[2k]{\frac{1}{-\varsigma_{\rho,\e}^{s}}}(\ov{y}^\e_{-\rho}-y)^{\frac{1}{2k}}+\CO\left((\ov{y}^\e_{-\rho}-y)^{1+\frac{1}{2k}}\right).$$ 
Now, we know that there exists a diffeomorphism 
$P^e_\e:\tau^u_{t,\e}\longrightarrow\tau^s_{t,\e}$ defined as
$$P^e_\e(y)=\ov{y}^\e_{-\rho}+r^\e_{x_\e,\rho}(y-\ov{y}^\e_{x_\e})+\CO((y-\ov{y}^\e_{x_\e})^2)+\CO(\e).$$
\begin{figure}[h]
	\begin{center}
    \begin{overpic}[scale=0.9]{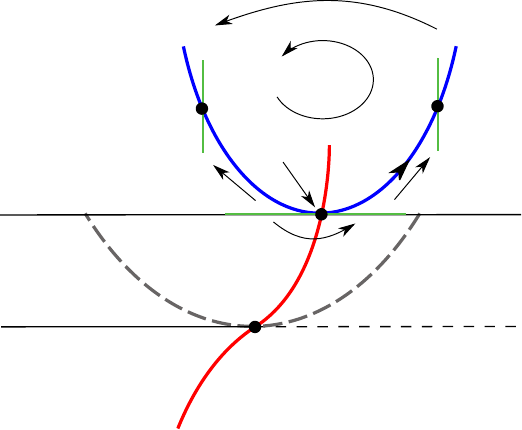}
		\put(50,53){\scriptsize $\psi(\e)$ \par}
		\put(50,16){$p$}
		\put(37,44){$R_\e^s$}
		\put(80,44){$R_\e^u$}
    \put(31,55){$\tau^s_{t,\e}$}
		 \put(85,55){$\tau^u_{t,\e}$}
		  \put(78,64){\scriptsize $q^u_{\e}$ \par}
		  \put(40,64){\scriptsize $q^s_{\e}$ \par}
		  \put(45,38){\scriptsize $\sigma^-_{p,\e}$ \par}
		  \put(71,38){\scriptsize $\sigma^+_{p,\e}$ \par}
		\put(101,22){$\Sigma$}
		\put(101,44){$y=\e$}
		\put(60,66){$\pi_\e$}
		\put(60,85){$P_\e^e$}
		\put(61,33){$\rho_\e$}
		\put(41,7){$x=\psi(y)$}
		\end{overpic}
		\caption{\tiny{The first return map $\pi_\e$ of $Z_\e^\Phi.$}}
	\label{mirrormap1}
	\end{center}
	\end{figure}
Consequently, we get the first return map $\pi_\e:\tau^s_{t,\e}\longrightarrow\tau^s_{t,\e}$ given by
\begin{equation}\label{pireg1}\begin{array}{rcl}
\pi_\e(y)&=&P^e_\e\circ R_\e^u\circ\rho_\e\circ(R_\e^s)^{-1}(y)\\
&=&\ov{y}^\e_{-\rho}-\dfrac{r^\e_{x_\e,\rho}\varsigma_{x_\e,\e}^{u}}{\varsigma_{\rho,\e}^{s}}(\ov{y}^\e_{-\rho}-y)+\CO((\ov{y}^\e_{-\rho}-y)^p)+\CO(\e)
\end{array}\end{equation}
for some $p>1,$ where $\rho_\e:\sigma^-_{p,\e}\longrightarrow\sigma^+_{p,\e}$ is defined as
$$\rho_\e(x)=-x+2\psi(\e)+\CO\left((x-\psi(\e))^2,\e(x-\psi(\e))\right).$$ 
In \cite{NovRon2019}, $\rho_\e$ is called \textit{mirror map} associated with $Z_{\e}^{\Phi}$ at $\psi(\e)$ (see Figure \ref{mirrormap1}). For more details about the mirror map see, for instant, section 8.1 in \cite{NovRon2019}.	

\textbf{Step 2.} In this step we show that $\lim_{\e,\rho \to 0}\varsigma_{x_\e,\e}^{u}/\varsigma_{\rho,\e}^{s}=1.$ For that, notice that for $\rho,\T>0$ sufficiently small:
\begin{itemize}
\item $\lim\limits_{\e \to 0}\ov{y}^\e_{-\rho}=\ov{y}_{-\rho},$
\item $\lim\limits_{\e \to 0}\ov{y}^\e_{\T}=\ov{y}_{\T},$
\end{itemize}
where $\ov{y}_{-\rho}$ and $\ov{y}_{\T}$ are given in Lemma \ref{y0}. We remark that $R^s_\e(-\rho)=\e$ and $R^u_\e(\T)=\e$. Hence, by \eqref{newTsu2} we get the following equations 
\[\begin{split}
\ov{y}^\e_{-\rho}+\varsigma_{\rho,\e}^{s}(-\rho-\psi(\e))^{2k}+\CO\left((-\rho-\psi(\e))^{2k+1}\right)&=\e,\\
\ov{y}^\e_{\T}+\varsigma_{\T,\e}^{u}(\T-\psi(\e))^{2k}+\CO\left((\T-\psi(\e))^{2k+1}\right)&=\e.\\
\end{split}\]
Making $\e$ tend to 0 and using that $\lim\limits_{\e \to 0}\psi(\e)=0,$ we obtain that
$$
\ov{y}_{-\rho}+\varsigma_{\rho,0}^s\rho^{2k}+\CO(\rho^{2k+1})=0\quad\text{and}\quad
\ov{y}_\T+\varsigma_{\T,0}^u\T^{2k}+\CO(\T^{2k+1})=0.$$

By Lemma \ref{y0}, we get
\[\begin{split}
\frac{\al\rho^{2k}}{2k}+\CO(\rho^{2k+1})+\varsigma_{\rho,0}^s\rho^{2k}+\CO(\rho^{2k+1})&=0,\\
\frac{\al\T^{2k}}{2k}+\CO(\T^{2k+1})+\varsigma_{\T,0}^u\T^{2k}+\CO(\T^{2k+1})&=0,\\
\end{split}\]
i.e. 
\[
\frac{\al}{2k}+\varsigma_{\rho,0}^s+\CO(\rho)=0\quad\text{and}\quad
\frac{\al}{2k}+\varsigma_{\T,0}^u+\CO(\T)=0.
\]
Consequently, $\varsigma_{\rho,0}^s,\varsigma_{\T,0}^u$ tend to $-\frac{\alpha}{2k}$, when $\rho,\T$ tend to 0. Thus,
\[
\lim\limits_{\e,\rho  \to 0}\varsigma_{\rho,\e}^s=-\frac{\alpha}{2k}\quad\text{and}\quad
\lim\limits_{\e,\T \to 0}\varsigma_{\T,\e}^u=-\frac{\alpha}{2k}.
\]
Therefore, taking $\T=x_\e$ we conclude that $\lim_{\e,\rho \to 0}\varsigma_{x_\e,\e}^{u}/\varsigma_{\rho,\e}^{s}=1.$

\textbf{Step 3.} Now, if $\Gamma_\e$ is a limit cycle of the regularized system $Z_{\e}^{\Phi}$ converging to $\Gamma$ (i.e. there exists a fixed point $(-\rho,y_\e^\rho)\in\tau^s_{t,\e}$ of $\pi_\e$ such that $\lim\limits_{\e \to 0}y_\e^\rho=\ov{y}_{-\rho}$), then by \eqref{pireg1} we have
$$\frac{d\pi_\e}{dy}(y)=\dfrac{r^\e_{x_\e,\rho}\varsigma_{x_\e,\e}^{u}}{\varsigma_{\rho,\e}^{s}}+\CO\left((\ov{y}^\e_{-\rho}-y)^{p-1}\right).$$
Notice that $\lim\limits_{\e \to 0}r_{\T,\rho}^\e=r_{\T,\rho}$ when $\T$ is small enough. Hence, using Lemma \ref{rTR} and step 2, we get
$$\lim\limits_{\e,\rho \to 0}\frac{d\pi_\e}{dy}(y_\e^\rho)=K.$$
Therefore, if $K>1$ (resp. $K<1$), then $\Gamma_\e$ is hyperbolic and unstable (resp. asymptotically stable), for $\e>0$ small enough.

\begin{figure}[h]
	\begin{center}
    \begin{overpic}[scale=0.7]{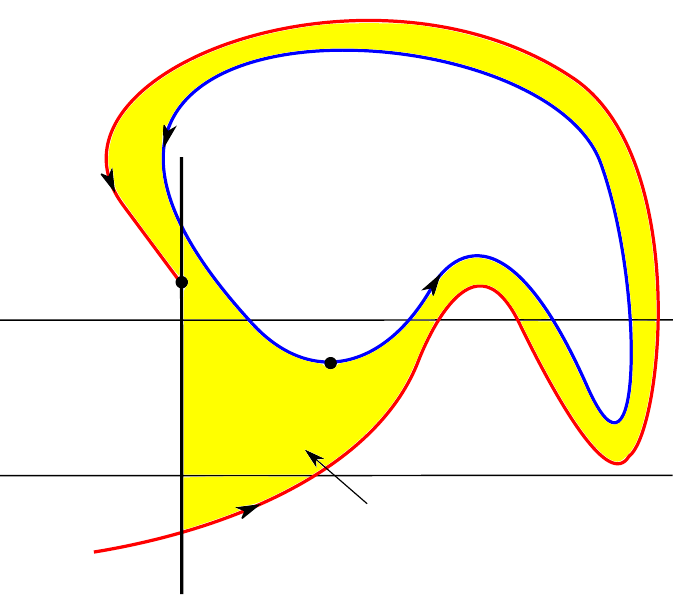}
		\put(5,8){$S_{a,\e}$}
		\put(21,-1){$x=-\rho$}
		\put(45,77){$\Gamma_\e$}
		\put(101,21){$\Sigma$}
		\put(101,45){$y=\e$}
		\put(55,13){$\mathcal{B}_\e$}
		\end{overpic}
		\caption{\tiny{The region $\mathcal{B}_\e$.}}
	\label{regiongsr1}
	\end{center}
	\end{figure}
	
\textbf{Step 4.} Now, we are ready to prove this proposition. The proof of the first statement is by contradiction. Assume that there exists a limit cycle $\Gamma_\e$ of $Z_{\e}^{\Phi}$ such that $\Gamma_\e$ converging to $\Gamma,$ for $\e>0$ sufficiently small. Let $\mathcal{B}_\e$ be the region delimited by the curves $x=-\rho,$ the limit cycle $\Gamma_\e$ and the Fenichel manifold $S_{a,\e}$ associated with $Z_{\e}^{\Phi},$ (see Figure \ref{regiongsr1}). Since $\Gamma_\e$ converges to the regular orbit $\Gamma$ then $\mathcal{B}_\e$ has no singular points. Moreover, it is easy to see that  $\mathcal{B}_\e$ is positively invariant compact set, for $\e>0$ small enough. For $\e>0$ choose $q_\e\in\mathcal{B}_\e,$ from the \textit{Poincar\'{e}--Bendixson Theorem} $\omega(q_\e)\subset \mathcal{B}_\e$ is a limit cycle of $Z_{\e}^{\Phi}$ which is not unstable, but this contradicts step 3.

Now, we prove the second statement. By Theorem \ref{tc1}, for $\e>0$ sufficiently small, we know that $Z_{\e}^{\Phi}$ has a asymptotically stable limit cycle $\Gamma_\e$ converging to $\Gamma.$ In addition, from step 3 we have that $\Gamma_\e$ is hyperbolic. Finally, we claim that $\Gamma_\e$ is the unique limit cycle with these properties. In fact, assume that there exists another limit cycle $\widetilde{\Gamma_\e}$ converging to $\Gamma$, hyperbolic and asymptotically stable. Now, let $\mathcal{R}_\e$ be the region delimited by the limit cycles $\Gamma_\e$ and $\widetilde{\Gamma_\e}.$  Notice that $\mathcal{R}_\e$ is negatively invariant compact set. Furthermore, since $\Gamma_\e$ and $\widetilde{\Gamma_\e}$ converges to the regular orbit $\Gamma,$ we can conclude that $\mathcal{R}_\e$ has no singular points for $\e>0$ small enough. For $\e>0$ choose $q_\e\in\mathcal{R}_\e,$ from the \textit{Poincar\'{e}--Bendixson Theorem} we can conclude that $\alpha(q_\e)\subset\mathcal{R}_\e$ is a limit cycle of $Z_{\e}^{\Phi}$ that is not asymptotically stable, but this contradicts step 3.
\end{proof}

\section{Regularization of $\Sigma-$Polycycles of type $(b)$}\label{sec:polycyclesb}
In this section, we shall state and prove the second main result of this paper, which in particular establishes sufficient conditions under which the regularized vector field  $Z_{\e}^{\Phi}$ has a limit cycle $\Gamma_{\e}$ converging to a $\Sigma-$polycycle of type $(b).$ For this, suppose that a Filippov system $Z=(X^+,X^-)$ has a $\Sigma-$polycycle $\Gamma$ of type $(b)$.
Without loss of generality, assume that: 
\begin{enumerate}
\item[(b.1)]$X_1^+(p)>0$;
 \item[(b.2)]  the trajectory of $Z$ through $p$ crosses $\Sigma$ transversally $m-$times 
at $q_1,\cdots, q_m$, satisfying that for each $i = 0,\cdots, m$, there exists $t_i>0$ such that $\varphi_Z(t_i,q_i)=q_{i+1}$, where $q_{m+1}=p=q_0$. Moreover, $\Gamma\cap\Sigma =\{q_1,\cdots, q_m, p\}.$ 
\end{enumerate}

We shall also assume that 
\begin{enumerate}
\item[(b.3)] $W^s_{\ptt}(p)\cup W^u_t(p)\subset \Gamma,$ i.e. $X^-h(p)>0.$
\end{enumerate}
The case $W^u_{\ptt}(p)\cup W^s_t(p)\subset \Gamma$ is obtained from the previous case multiplying the vector field $Z$ by -1 (see Remark \ref{remark2} bellow). 

Since $X^-h(p)>0,$ by the \textit{Tubular Flow Theorem} for $X^-$ at $p=(0,0),$ there exist an open set $U$ and a local $\mathcal{C}^{2k}$ diffeomorphism $\psi$ defined on $U$ such that $\widetilde X^-=\psi_*X^-=(0,1).$  Clearly, the transformed vector field $\widetilde X^+=\psi_*X^+$ still has a visible $2k$-multiplicity contact with $\Sigma$ at $p=(0,0)$ and $\psi(\Sigma)=\Sigma.$ Thus, without loss of generality, we can assume that there exists a neighborhood $U\subset\R^2$ of $p=(0,0)$ such that $X^-\big|_U=(0,1).$

Notice that assumption $(b.2)$ above guarantees the existence of an exterior map $D$ associated to $Z.$ In what follows, we characterize such a map. 
Since $X_1^+(p), X_2^-(p)>0,$ then $X_1^+(x,y), X_2^-(x,y)> 0,$ for all $(x,y)\in U$ (taken smaller if necessary). Take $\e,\T>0$ small enough in order that the points $q^{u}=(\T,\ov{y}_\T)\in W^{u}_t(p)$ and $q^{s}=(0,-\e)\in W^{s}_{\ptt}(p)$ are contained in $U$. Then there exist positive numbers $\delta^{u,s}$ such that
\begin{equation}\label{sectrab}
\begin{array}{l}
\tau^{u}_t=\{(\T,y):y\in(\ov{y}_\T-\delta^{u},\ov{y}_\T+\delta^{u})\} \,\text{and}\\
\tau^{s}_{\ptt}=\{(x,-\e):x\in(-\delta^{s},\delta^{s})\}\\
\end{array}\end{equation} are transversal sections of $X^+$ and $X^{-}$, respectively. In addition, $\sigma_p=[0,\T]\times\{0\}$ is a transversal section of $X^{-}$. Moreover, by the \textit{Tubular Flow Theorem} there exist the $C^{2k}-$diffeomorphism $T^{s}:\sigma_p\longrightarrow\tau^{s}_{\ptt}$ and $D:\tau^{u}_t\longrightarrow\tau^{s}_{\ptt}$ such that $T^{s}(p)=q^{s}$ and $D(q^u)=q^s$ (see Figure \ref{figtan2}). Thus, expanding $D$ around $y=\ov{y}_{\T}$, we get
\begin{equation}\label{Dii}
D(y)=r_{\T,\e}(y-\ov{y}_{\T})+\CO((y-\ov{y}_{\T})^2),
\end{equation}
where $r_{\T,\e}=\frac{d D}{dy}(\ov{y}_{\T}).$
 \begin{figure}[h]
	\begin{center}
    \begin{overpic}[scale=0.6]{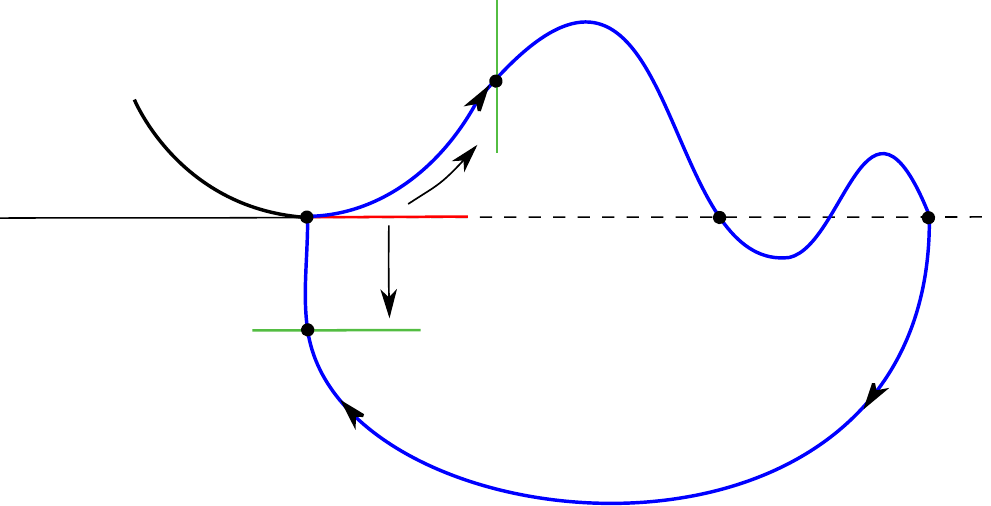}
		\put(28,26){$p$}
		\put(70,26){$q_1$}
		\put(95,26){$q_2$}
		\put(42,21){$T^s$}
		\put(47,30){$T^u$}
    \put(21,16){$\tau^s_{\ptt}$}
		 \put(48,54){$\tau^u_{t}$}
		  \put(47,44){\scriptsize $q^u$ \par}
		  \put(33,15){\scriptsize $q^s$ \par}
		\put(99,31){$\Sigma$}
		\put(91,36){$\Gamma$}
		\end{overpic}
		\caption{\tiny{$\Sigma-$polycycle $\Gamma$ satisfying $(b.1),$ $(b.2),$ and $(b.3)$.}}
	\label{figtan2}
	\end{center}
	\end{figure}

Now, by the Definition \ref{defpoly}, we know that there exists a first return map $\pi_\Gamma$ defined, at least, in one side of $\Gamma.$  In what follows, we shall see that there exist positive constants $\T$ and $K$ such that the first return map $\pi_\Gamma:\sigma_p\rightarrow\sigma_p$ is given by 
	\begin{equation}\label{piK2}
	\pi_\Gamma(x)=Kx^{2k}+\CO(x^{2k+1}), \,\text{where $K>0.$}
	\end{equation}
Indeed, expanding $T^{s}$ around $x=0$, we have 
\begin{equation}\label{Tsd2}
T^{s}(x)=\kappa_{\e}^{s}x+\CO(x^{2}),
\end{equation}
with $\kappa_{\e}^{s}=\frac{d T^{s}}{dx}(0).$ In addition, using \eqref{Dii} and the \textit{Implicit Function Theorem}, we get
\begin{equation}\label{Dinv2}
D^{-1}(x)=\ov{y}_\T+\frac{1}{r_{\T,\e}}x+\CO(x^2).
\end{equation}
Since $X^+$ and $X^-$ are planar vector fields, the uniqueness of solutions implies that  $r_{\T,\e}<0$ and $\kappa_{\e}^{s}>0$  for all $\T,\e>0$ sufficiently small.

We remark that the arc-orbit connecting $p$ with $q^u$ is contained in $\Sigma^+$ (see Figure \ref{figtan2}). Then, from \cite[Theorem A]{AndGomNov19} we know that there exists a transition map $T^{u}:\sigma_p\longrightarrow\tau^{u}_{t}$ defined as
\begin{equation}\label{Tsu2}
\begin{split}
T^{u}(x)&=\ov{y}_\T+\kappa_{\T}^{u}x^{2k}+\CO(x^{2k+1}),\\
\end{split}
\end{equation}
where $\sgn(\kappa_{\T}^u)=-\sgn((X^{+})^{2k}h(p))$, i.e. $\kappa_{\T}^u<0$.

Thus, we can define the first return map $\pi_\Gamma:\sigma_p\longrightarrow\sigma_p$ by
	$$\pi_\Gamma(x)=(D^{-1}\circ T^s)^{-1}\circ T^u(x).$$ 
From \eqref{Tsd2} and \eqref{Dinv2}, we have that $D^{-1}\circ T^s(x)=\ov{y}_\T+\frac{\kappa_{\e}^s}{r_{\T,\e}}x+\CO(x^{2}).$ Hence, using the \textit{Implicit Function Theorem} we conclude that
	$$(D^{-1}\circ T^s)^{-1}(x)=\frac{r_{\T,\e}}{\kappa_\e^s}(x-\ov{y}_\T)+\CO((x-\ov{y}_\T)^2).$$
 Thus, 
\begin{equation}\label{pinK2}
\pi_\Gamma(x)=\frac{r_{\T,\e}\kappa_\T^u}{\kappa_\e^s}x^{2k}+\CO(x^{2k+1}).
\end{equation}
Accordingly, taking $K:=\frac{r_{\T,\e}\kappa_\T^u}{\kappa_\e^s}>0,$ we get \eqref{piK2}.
\begin{remark}\label{Gstable}
Notice that for $x\in[0,\T]$ we have that $\pi_\Gamma(x)<x,$ for some small $\T>0$. This means that $\Gamma$ is always asymptotically stable provided that $(b.3)$ is satisfied, i.e. $X^-h(p)>0.$ If $X^-h(p)<0,$ $\Gamma$ would be unstable.
\end{remark}
Now, since $D$ is a diffeomorphism induced by a regular orbit, we can easily see that the regularized system  $Z_{\e}^{\Phi}$ also admits an exterior map given by 
\begin{equation}\label{Tepsb}
D_\e(y)=D(y)+\CO(\e).
\end{equation}
In what follows, we shall state the second main theorem of this article that will be proven in Section \ref{sec:prooftc2}.

\begin{mtheorem}\label{tc2}
Consider a Filippov system $Z=(X^+,X^-)_{\Sigma}$ and assume that $Z$ has a $\Sigma-$polycycle $\Gamma$ of type $(b)$ satisfying $(b.1),$ $(b.2),$ and $(b.3).$ For $n\geq 2k-1,$ let $\Phi\in C^{n-1}_{ST}$ be given as $\eqref{Phi}$ and consider the regularized system $Z_{\e}^{\Phi}$ \eqref{regula}. Then the regularized system $Z_{\e}^{\Phi}$ \eqref{regula} admits at least a limit cycle $\Gamma_{\e},$ for $\e>0$ sufficiently small. Moreover, $\Gamma_{\e}$ converges to $\Gamma.$
\end{mtheorem}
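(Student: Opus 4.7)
My strategy is to follow the same blueprint as the proof of Theorem~\ref{tc1}: build a regularized first return map $\pi_\e$ and locate a fixed point of it converging to the tangency as $\e\to 0$. The essential simplification here, compared to Theorem~\ref{tc1}, is that $\pi_\Gamma(x)=Kx^{2k}+\CO(x^{2k+1})$ with $K>0$ given in \eqref{piK2} is super-attracting at $x=0$ (cf.~Remark~\ref{Gstable}). Consequently only existence of a nearby fixed point of $\pi_\e$ is needed, and Brouwer's fixed-point theorem will suffice in place of the contraction-mapping argument used for Theorem~\ref{tc1}.

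To build $\pi_\e$, I would work in the normalized coordinates where $X^-\equiv(0,1)$ on the neighborhood $U$ of $p$, and use a transversal section sitting just below the regularization strip, parameterized by the horizontal coordinate (a segment of $\{y=-y_0\}$ for a fixed $y_0>0$ with $y_0>\e$). An orbit of $Z_\e^\Phi$ starting there: (i) rises vertically to the lower boundary $\{y=-\e\}$ of the strip under $X^-$; (ii) crosses the strip, the transition being controlled by an analog of the Upper Transition Map of Theorem~\ref{ta1} adapted to entry from $y=-\e$, which produces a map onto the section $\{x_\e\}\times[y^\e_{x_\e},\ldots]$ with exponentially small error in $\e$; (iii) evolves under $X^+$ out to the transversal section $\tau^u_t$; and (iv) returns to the starting section via the exterior map $D_\e\colon\tau^u_t\to\tau^s_{\ptt}$ from \eqref{Tepsb}. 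Composing these four steps and applying an analog of Lemma~\ref{rTR} to identify the leading coefficient should yield an expansion
\[
\pi_\e(x)\;=\;Kx^{2k}+\CO(x^{2k+1})+\CO(\e^{\gamma})
\]
for a suitable $\gamma>0$, with $K$ as in \eqref{piK2}.

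Once this expansion is in hand, the argument closes quickly. Fix $b>0$ small enough that $\pi_\Gamma(x)<x/2$ on $(0,b]$; then for $\e>0$ small the $\CO(\e^\gamma)$ correction is dominated, and $\pi_\e$ sends a suitable compact subinterval $I_\e\subset[0,b]$ shrinking to $\{0\}$ into itself. Brouwer's fixed-point theorem provides $x_\e^*\in I_\e$ with $\pi_\e(x_\e^*)=x_\e^*$, which corresponds to the desired limit cycle $\Gamma_\e$. Since $\pi_\Gamma(x)<x$ for small $x>0$ and $\pi_\e\to\pi_\Gamma$ uniformly on $[0,b]$, the only possible accumulation point of $\{x_\e^*\}$ in $[0,b]$ is $0$, whence $x_\e^*\to 0$; differentiable dependence of the outer flows of $X^+$ and $X^-$ on initial data then promotes this to Hausdorff convergence $\Gamma_\e\to\Gamma$.

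The main technical obstacle is step~(ii): the Upper Transition Map of Appendix~B is formulated for orbits entering the regularization strip from $\Sigma^+$ at $y=+\e$, whereas in the present setting the relevant orbits enter from $\Sigma^-$ at $y=-\e$. I expect the Fenichel/blow-up analysis to be essentially a symmetric reflection of the one in Appendix~B, so that the asymptotic form and the exponentially small remainder used in the proof of Theorem~\ref{tc1} are preserved, but verifying this carefully is the crucial piece of work. Once it is in place, the super-attracting character of $\pi_\Gamma$ built into assumptions $(b.1)$--$(b.3)$ makes the remaining fixed-point argument essentially routine.
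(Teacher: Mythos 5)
Your overall strategy (a regularized first return map plus Brouwer) is genuinely different from the paper's, which instead constructs a positively invariant compact region $\mathcal{R}_\e$ bounded by $y=\pm\e$ and two arc-orbits and invokes the Poincar\'e--Bendixson theorem. Unfortunately, your route has a gap at its central step. First, the ``crucial piece of work'' you flag --- a transition map for orbits entering the regularization strip at $y=-\e$ --- is not actually the issue: this is precisely the Lower Transition Map $L_\e$ of Theorem \ref{tb1} in Appendix B, already available. The real problem is the claimed expansion $\pi_\e(x)=Kx^{2k}+\CO(x^{2k+1})+\CO(\e^{\gamma})$ holding uniformly on a fixed interval $[0,b]$. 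The transition estimates of Theorems \ref{ta1} and \ref{tb1} are valid only on domains such as $[-\rho,-\e^{\la}]\times\{-\e\}$ that exclude an $\e^{\la}$-neighborhood of the tangency (with $\la<\la^{*}\leq 1$), and on those domains the transition map is not a small perturbation of the unregularized one: its image is an exponentially small interval around the single point $y_{\T}^{\e}$, reflecting exponential attraction to the Fenichel manifold, so $\pi_\e$ is essentially constant there rather than $C^{0}$-close to $Kx^{2k}$. The uniform convergence $\pi_\e\to\pi_\Gamma$ near the tangency, on which both your Brouwer step and your convergence argument rely, is exactly what fails.

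Second, and decisively, the near-constant value of $\pi_\e$ does not land back in the controllable domain. From \eqref{Pie}, $D_\e\circ L_\e(-\e^{\la})=r_{\ov{x}^{+}_{\e},\e}\,\e+\CO(\e)=\CO(\e)$, whereas the inner endpoint of the domain is $-\e^{\la}$ with $\e^{\la}\gg\e$. Hence the return map sends every point of $[-\rho,-\e^{\la}]\times\{-\e\}$ into the uncontrolled $\e^{\la}$-neighborhood of the tangency, and no self-mapped interval $I_\e$ is available for Brouwer with the estimates at hand. This is precisely why the paper abandons the fixed-point route: it uses the single inequality $D_\e\circ L_\e(-\e^{\la})>-\e^{\la}$ to close the inner boundary of $\mathcal{R}_\e$, the inequality $\widetilde{D}_\e(\delta)<\delta$ (from the super-attracting character of $\pi_\Gamma$, Remark \ref{Gstable}) to close the outer boundary, and then Poincar\'e--Bendixson to produce a limit cycle $\Gamma_\e\subset\mathcal{R}_\e\subset\U$; since $\U$ is an arbitrary neighborhood of $\Gamma$, this also yields the convergence statement without any uniform control of $\pi_\e$ near the tangency.
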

\begin{remark}\label{remark2}
In Theorem \ref{tc2} we are assuming $(b.3),$ i.e. $W^u_{t}(p)\cup W^s_{\ptt}(p)\subset \Gamma.$ If  $W^u_{\ptt}(p)\cup W^s_t(p)\subset \Gamma,$ Theorem \ref{tc2} can be applied to $-Z$. Consequently, we get a limit cycle for $Z.$ For more details see Remark \ref{remark1}.
\end{remark}

In order to prove this theorem, we shall establish the relationship between the derivative of the first return map $K$ and the derivative of exterior map $r_{\T,\e}$ as follows.

\begin{lemma}\label{lims} Consider $\kappa_{\T}^u$ and $r_{\T,\e}$ given as in \eqref{Tsu2} and \eqref{Dii}, respectively. Then,
\begin{enumerate}
\item[i)] $\lim\limits_{\T \to 0}\kappa^u_{\T}=-\frac{\al}{2k}.$
\item [ii)] $\lim\limits_{\T,\e \to 0}r_{\T,\e}=-\frac{2kK}{\al}.$
\end{enumerate}
\end{lemma}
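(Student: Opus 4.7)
The plan is to address (ii) first, since it follows algebraically from (i), and then to tackle (i) by an Implicit Function Theorem analysis of the flow of $X^+$ at the tangency, in the spirit of the proof of Lemma \ref{rTR}. Matching the $x^{2k}$-coefficients in the two expressions \eqref{piK2} and \eqref{pinK2} for $\pi_\Gamma$ gives
\[
K \;=\; \frac{r_{\T,\e}\,\kappa_\T^u}{\kappa_\e^s}, \qquad \text{equivalently}\qquad r_{\T,\e} \;=\; \frac{K\,\kappa_\e^s}{\kappa_\T^u}.
\]
Since our coordinate choice puts $X^-$ into the normal form $X^-|_U=(0,1)$, the transition map $T^s\colon\sigma_p\to\tau^s_\ptt$ between $[0,\T]\times\{0\}$ and $\{y=-\e\}$ is simply a vertical translation by $-\e$; in particular $T^s(x)=x$, so $\kappa_\e^s=1$ for every sufficiently small $\e>0$. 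Granting (i), we conclude
\[
\lim_{\T,\e\to 0}\,r_{\T,\e} \;=\; \frac{K\cdot 1}{-\al/(2k)} \;=\; -\frac{2kK}{\al},
\]
which is (ii).

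For (i), write $\varphi_{X^+}=(\varphi^1_{X^+},\varphi^2_{X^+})$ and set $\mu(t,x,\T):=\varphi^1_{X^+}(t,x,0)-\T$. Since $\mu(0,0,0)=0$ and $\partial_t\mu(0,0,0)=X_1^+(p)>0$, the Implicit Function Theorem supplies a smooth transit time $t(x,\T)$, with $t(0,0)=0$, satisfying $\varphi^1_{X^+}(t(x,\T),x,0)=\T$. The transition map in \eqref{Tsu2} then reads $T^u(x)=\varphi^2_{X^+}(t(x,\T),x,0)$, so $\kappa_\T^u=\tfrac{1}{(2k)!}\partial_x^{2k}T^u(0)$ is a smooth function of $\T$ for $\T\ge 0$ small, whose value at $\T=0$ depends only on the $2k$-jet of $X^+$ at $p$. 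Because of the tangential conditions $(X^+)^j h(p)=0$ for $1\le j\le 2k-1$ together with $(X^+)^{2k}h(p)\neq 0$, all Taylor coefficients of $T^u$ in $x$ below order $2k$ collapse in the limit $\T\to 0$, while the coefficient of $x^{2k}$ becomes proportional to $(X^+)^{2k}h(p)$; identifying this proportionality constant with the normalization under which $\al$ was introduced yields the announced limit $-\al/(2k)$.

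The main technical obstacle lies in this last step of (i): one must carefully keep track of all the mixed terms $t(x,\T)^a x^b$ with $a+b\le 2k$ arising from the joint Taylor expansion of $\varphi^2_{X^+}(t(x,\T),x,0)$, and invoke the tangential vanishing conditions to verify that none of them survives in the limit except the one producing the $x^{2k}$-coefficient. Part (ii), by contrast, is essentially automatic once (i) is in hand; the signs $r_{\T,\e}<0$, $\kappa_\T^u<0$, $\kappa_\e^s>0$, and $K>0$ recorded earlier in the text are then consistent with $\al>0$, as implicit in the formula $-2kK/\al<0$.
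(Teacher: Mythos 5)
Your part (ii) is correct and essentially matches the paper: the identity $r_{\T,\e}=K\kappa_\e^s/\kappa_\T^u$ obtained by matching \eqref{piK2} with \eqref{pinK2} is exactly the paper's starting point, and in the normal form $X^-|_U=(0,1)$ the map $T^s$ is indeed the identity in the $x$-coordinate, so $\kappa_\e^s=1$. (The paper instead computes $\lim_{\e\to0}\kappa_\e^s=1$ by an Implicit Function Theorem argument on $\varphi_{X^-}$, which does not need the normal form; either way the reduction of (ii) to (i) is sound.)

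Part (i), however, is not proved. You correctly set up the transit time $t(x,\T)$ and reduce the claim to showing that the $x^{2k}$-coefficient of $\varphi^2_{X^+}(t(x,\T),x,0)$ tends to $-\al/2k$ as $\T\to0$, but you then defer precisely this computation, calling it ``the main technical obstacle'' of tracking the mixed terms $t(x,\T)^a x^b$. The phrase ``identifying this proportionality constant with the normalization under which $\al$ was introduced'' is not an identification you have performed: the sign and the factor $1/(2k)$ are exactly what statement (i) asserts, so at the decisive step the argument assumes what it must prove. (Also, the coefficients of $x^j$ for $1\le j\le 2k-1$ in \eqref{Tsu2} vanish for every small $\T$ by the cited Theorem A of \cite{AndGomNov19}, not merely ``in the limit''; what survives besides $\kappa_\T^u x^{2k}$ is the constant term $\ov{y}_\T$.) The paper sidesteps the entire jet computation with a boundary-condition trick: the endpoint $(\T,0)$ of $\sigma_p=[0,\T]\times\{0\}$ already lies on the section $\{x=\T\}$, hence $T^u(\T)=0$; substituting $x=\T$ into \eqref{Tsu2} and using $\ov{y}_\T=\frac{\al\,\T^{2k}}{2k}+\CO(\T^{2k+1})$ from Lemma \ref{y0} gives $\frac{\al}{2k}+\kappa_\T^u+\CO(\T)=0$, and letting $\T\to0$ yields the limit. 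Either adopt this argument or actually carry out the $2k$-jet computation; as written, (i) is asserted rather than proved.
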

\begin{proof}
First, we prove the statement $i)$. Indeed, using \eqref{Tsu2} and that $T^u(\T)=0$ for all $\T>0$ small enough, we have $\ov{y}_\T+\kappa_\T^u\T^{2k}+\CO(\T^{2k+1})=0.$ By Lemma \ref{y0} we know that $\ov{y}_\T=\frac{\al\T^{2k}}{2k}+\CO(\T^{2k+1})$, thus
$$\frac{\al\T^{2k}}{2k}+\CO(\T^{2k+1})+\kappa_\T^u\T^{2k}+\CO(\T^{2k+1})=0,$$ i.e. 
$$\frac{\al}{2k}+\kappa_\T^u+\CO(\T)=0,$$ consequently, when $\T$ tends to 0 we conclude the result.

Now, we shall prove statement $ii)$. For this, notice that using equations \eqref{piK2} and \eqref{pinK2} we get for $\T,\e>0$ small enough that
$r_{\T,\e}=\frac{K\kappa_\e^s}{\kappa_\T^u}.$
To prove statement $ii)$, we just need to prove that $\lim\limits_{\e \to 0}\kappa_\e^s=1,$ because in this case we have 
\[\begin{array}{rl}
\displaystyle\lim\limits_{\T,\e \to 0}r_{\T,\e}=&\lim\limits_{\T,\e \to 0}\dfrac{K\kappa_\e^s}{\kappa_\T^u}\\
=&\displaystyle K\dfrac{\lim\limits_{\e \to 0}\kappa_\e^s}{\lim\limits_{\T \to 0}\kappa_\T^u}\\
=&\displaystyle-\dfrac{2kK}{\al}.
\end{array}\]
where we have used item $i)$. In what follows, we shall prove that the flow of $X^-$ induces $\mathcal{C}^{2k}$ map,
$\la_\e^s:\sigma'_p\subset\sigma_p\longrightarrow\tau^s_{\ptt},$
between the transversal sections defined in \eqref{sectrab} and satisfying $\la_\e^s(0)=q^s$. Indeed, consider the function  
$$\mu(t,x,\e)=\varphi^2_{X^-}(t,x,0)+\e,\hspace{0.2cm}\text{for} \hspace{0.2cm} (x,0)\in \sigma_p,\hspace{0.2cm} t\in I_{(0,y)},$$ where $\varphi_{X^-}$ is the flow of $X^-$ and $I_{(x,0)}$ is the maximal interval of existence of $t\mapsto\varphi_{X^-}(t,x,0)$. Since, 
$$\mu(0,0,0)=0\quad\text{and}\quad\frac{\partial \mu}{\partial t}(0,0,0)=\frac{\partial \varphi^2_{X^-}}{\partial t}(0,0,0)=X_2^-(p)\neq 0,$$ 
by the \textit{Implicit Function Theorem} there exist $\T_0>0$ and smooth function $t(x,\e)$ with $(x,0)\in\sigma'_p:=[0,\T_0)\times\{0\}\subset\sigma_p$ and $\T,\e>0$ sufficiently small such that 
$$t(0,0)=0 \quad\text{and}\quad\mu(t(x,\e),x,\e)=0,$$ 
i.e. $\varphi^2_{X^-}(t(x,\e),x,0)=-\e.$ Thus, we can define the function
\[
\la_\e^s(x)=\varphi^1_{X^-}(t(x,\e),x,0).
\]
Notice that 
$$\frac{d\la_\e^s}{dx}(0)=\frac{\partial \varphi^1_{X^-}}{\partial t}(t(0,\e),0,0)\frac{\partial t}{\partial x}(0,\e)+\frac{\partial \varphi^1_{X^-}}{\partial x}(t(0,\e),0,0).$$
Since, 
$$\frac{\partial t}{\partial x}(0,0)=-\frac{\frac{\partial \varphi^2_{X^-}}{\partial x}(0,0,0)}{\frac{\partial \varphi^2_{X^-}}{\partial t}(0,0,0)},$$
and $$\frac{\partial \varphi^1_{X^-}}{\partial x}(0,0,0)=1,\, \frac{\partial \varphi^2_{X^-}}{\partial x}(0,0,0)=0,\, \kappa_{\e}^{s}=\frac{d T^{u}\Big|_{\sigma'_p}}{dx}(0)=\frac{d \la_\e^{s}}{dx}(0),$$
we get that
\begin{equation}\label{LR2}\begin{array}{rl}\lim\limits_{\e \to 0} \kappa^s_\e=& \lim\limits_{\e \to 0}\frac{d\la_\e^s}{dx}(0) \\
 = & \frac{\partial \varphi^1_{X^-}}{\partial t}(t(0,0),0,0)\frac{\partial t}{\partial x}(0,0)+\frac{\partial \varphi^1_{X^-}}{\partial x}(t(0,0),0,0)\\  
= &  1.
\end{array}\end{equation} 
The result follows from \eqref{LR2}.
\end{proof}

\subsection{Proof of Theorem \ref{tc2}}\label{sec:prooftc2}

Let $\U$ be an open set such that $\Gamma\subset \U.$ Since $\Gamma$ is a $\Sigma-$polycycle of $Z$, then it is easy to see that there exists an open set $V\subset \U$ such that $V$ has no singular points of $Z_{\e}^{\Phi}$. Taking $(\delta,0)\in V,$ we get that $(\delta,\e)\in V,$ for $\e>0$ small enough.

Now, define the map $\widetilde{D}_\e(x)=\pi_\Gamma(x)+\CO(\e),$ for all $x\in I_\delta^\e,$ where $I_\delta^\e$ is a neighborhood of $\delta.$ By Remark \ref{Gstable} we know that $\pi_\Gamma(\delta)<\delta,$ then we can conclude that $\widetilde{D}_\e(\delta)<\delta,$ for $\e>0$ sufficiently small. 

By Theorem \ref{tb1} of Appendix B for $\T=\ov{x}^{+}_{\e}$,  
there exist $\rho_0>0,$ and constants $c,r,q>0$ such that for every $\rho\in(\e^\la,\rho_0],$ 
$\la\in(0,\la^*),$ and $\e>0$ sufficiently small, the flow of $Z_{\e}^{\Phi}$ defines a map $L_{\e}$  between the transversal sections $\widecheck H_{\rho,\la}^{\e}=[-\rho,-\e^{\la}]\times\{-\e\}\,\,\text{ and }\,\, \widecheck V_{\ov{x}^{+}_{\e}}^{\e}=\{\ov{x}^{+}_{\e}\}\times[y_{\ov{x}^{+}_{\e}}^\e-r e^{-\frac{c}{\e^q}},y_{\ov{x}^{+}_{\e}}^\e]$ satisfying
\begin{equation}\label{le}
\begin{array}{cccl}
L_{\e}:& \widecheck H_{\rho,\la}^{\e}& \longrightarrow& \widecheck V_{\ov{x}^{+}_{\e}}^{\e}\\
&x&\longmapsto&y_{\ov{x}^{+}_{\e}}^{\e}+\CO(e^{-\frac{c}{\e^q}}),
\end{array}
\end{equation}
where  $y_{\ov{x}^{+}_{\e}}^\e=\ov{y}_{\ov{x}^{+}_{\e}}+\e+\CO(\e^{1+\frac{1}{2k}})+\CO(\e^{1+\la^*})+\CO(\e^{2k\la^*})$ (see Figure \ref{figMAP11}).
\begin{figure}[H]
	\begin{center}
		\begin{overpic}[scale=0.5]{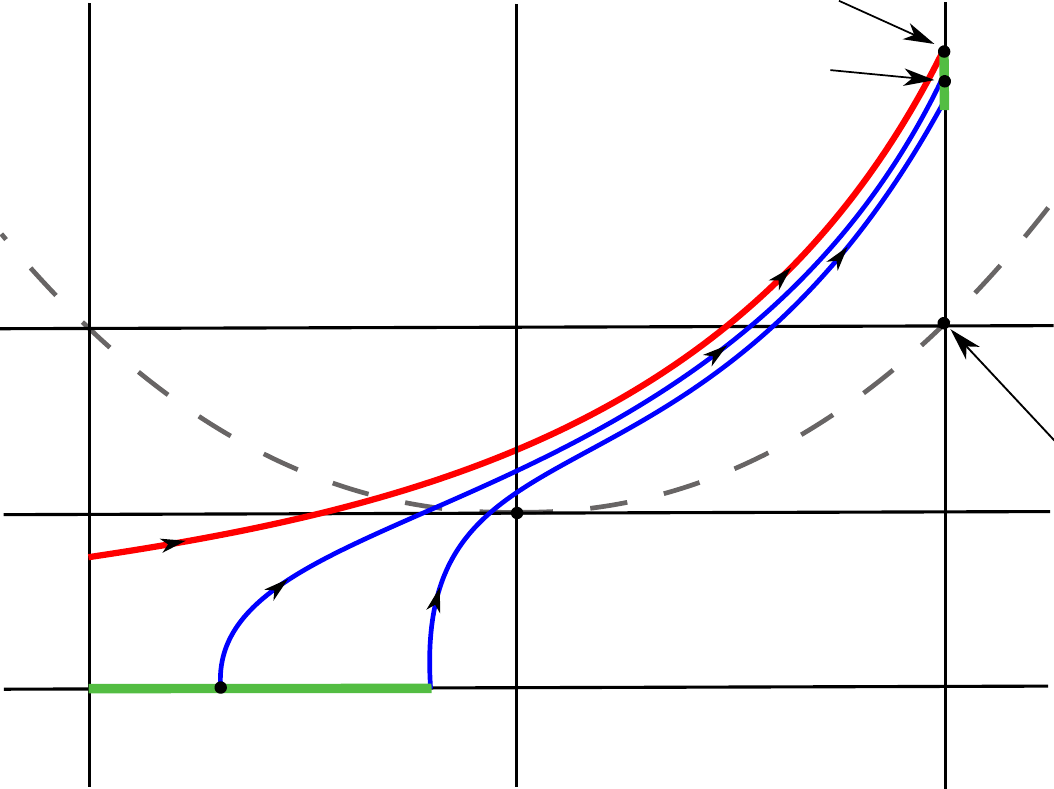}
\put(18,12){$x$}
	  \put(18,3){$\widecheck H_{\rho,\la}^{\e}$}
		\put(93,65){$\widecheck V_{\T}^{\e}$}
		\put(67,66){$L_\e(x)$}
		\put(72,75){$y_{\ov{x}^{+}_{\e}}^\e$}
		\put(101,31){\scriptsize $(\ov{x}^{+}_{\e},\ov{y}_{\ov{x}^{+}_{\e}})$ \par}
		\put(79,-4){$x=\T=\ov{x}^{+}_{\e}$}
		\put(2,-4){$x=-\rho$}
		\put(96,28){$\Sigma$}
		\put(96,46){$y=\e$}
		\put(96,12){$y=-\e$}
		\end{overpic}
	\end{center}
	
	\bigskip
	
	\caption{\tiny{Lower Transition Map $L_{\e}$ of the regularized system $Z^\Phi_\e.$ The dotted curve is the trajectory of $X^+$ passing through the visible regular-tangential singularity of multiplicity $2k$ $p=(0,0).$ The red curve is the well-known Fenichel manifold.}}
	\label{figMAP11}
	\end{figure}
Since $\widecheck V_{\ov{x}^{+}_{\e}}^{\e}\subset\tau_t^u$, for $\e>0$ sufficiently small, then we can consider the map $D_\e\circ L_\e:\widecheck H_{\rho,\la}^{\e}\longrightarrow\tau^{s}_{\ptt}.$ 
 We claim that $D_\e\circ L_\e(-\e^\la)>-\e^\la.$ Indeed, from \eqref{Tepsb} and \eqref{le}, we get
		\begin{equation*}
		\begin{array}{lllll} D_{\e}(L_\e(-\e^\la)) 
		& = &\displaystyle D_\e\Big(y^\e_{\ov{x}^{+}_{\e}}+\mathcal{O}(e^{-c/\e^q})\Big)\\
		& = &\displaystyle D\Big(\ov{y}_{\ov{x}^{+}_{\e}}+\e+\CO(\e^{1+\frac{1}{2k}})+\CO(\e^{1+\la^*})+\CO(\e^{2k\la^*})\Big)+\CO(\e)\\
		& = &\displaystyle r_{\ov{x}^{+}_{\e},\e}\e+\CO(\e^{1+\frac{1}{2k}})+\CO(\e^{1+\la^*})+\CO(\e^{2k\la^*})+\CO(\e).\\
		\end{array}
		\end{equation*}
Hence, 
$$ D_{\e}(L_\e(-\e^\la))+\e^\la=\e^\la+r_{\ov{x}^{+}_{\e},\e}\e+\CO(\e^{1+\frac{1}{2k}})+\CO(\e^{1+\la^*})+\CO(\e^{2k\la^*})+\CO(\e),$$ i.e. 
\[\begin{array}{ll}\dfrac{D_{\e}(L_\e(-\e^\la))+\e^\la}{\e^\la}=&1+r_{\ov{x}^{+}_{\e},\e}\e^{1-\la}+\CO(\e^{1+\frac{1}{2k}-\la})+\CO(\e^{1+\la^*-\la})+\CO(\e^{2k\la^*-\la})\vspace{0.2cm}\\
&+\CO(\e^{1-\la}).
\end{array}\]
Using Lemma \ref{lims}, we get
\begin{equation*}
		\lim\limits_{\e\longrightarrow 0}\dfrac{D_{\e}(L_\e(-\e^\la))+\e^\la}{\e^\la}=1>0.\end{equation*}
Since $\e>0,$ there exists $\e_0>0$ such that $0<\e<\e_0\hspace{0.1cm}\Longrightarrow\hspace{0.1cm}D_{\e}(L_\e(-\e^\la))+\e^\la>0.$ Hence, $D_\e\circ L_\e(-\e^\la)>-\e^\la,$ for $\e>0$ sufficiently small. 

\begin{figure}[h]
	\begin{center}
    \begin{overpic}[scale=0.7]{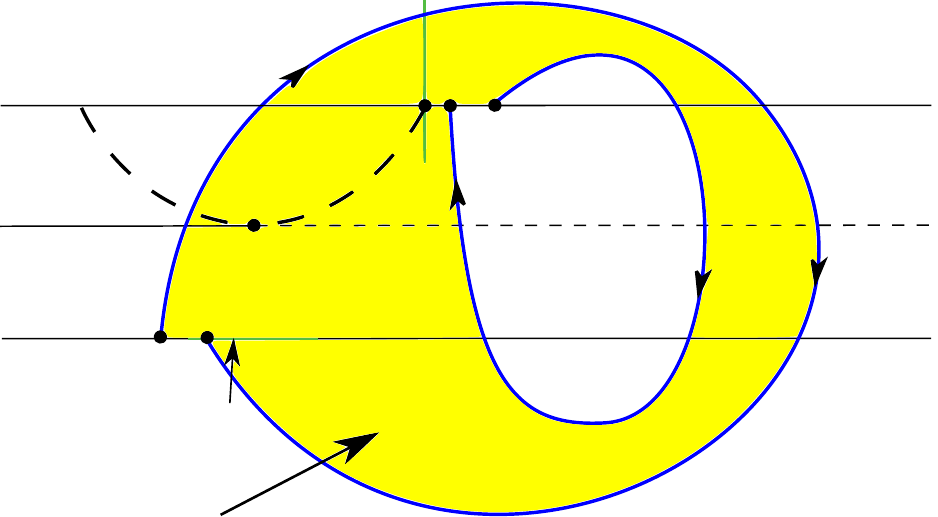}
		\put(19,21){{\tiny $D_\e\circ L_\e(-\e^\la)$ \par}}
		\put(51.5,39){$\delta$}
		\put(46.5,46){{\tiny $\widetilde{D}_\e(\delta)$ \par}}
		\put(13,15){$-\e^\la$}
		\put(23,8.5){$\tau^{s}_{\ptt}$}
		\put(101,20){$y=-\e$}
		\put(101,31){$\Sigma$}
		\put(101,45){$y=\e$}
		\put(44,57){$\tau_t^u$}
		\put(20,-3){$\mathcal{R}_\e$}
		\end{overpic}
		\caption{\tiny{The region $\mathcal{R}_\e$.}}
	\label{regionRb}
	\end{center}
	\end{figure}
Now, let $\mathcal{R}_\e$ be the region delimited by the curves $y=\e,$ $y=-\e$ and the arc-orbits connecting $(\delta,\e)$ with $(\widetilde{D}_\e(\delta),\e)$ and $(-\e^\la,-\e)$ with $(D_\e\circ L_\e(-\e^\la),-\e)$, respectively (see Figure \ref{regionRb}). It is easy to see that $\mathcal{R}_\e$ is positively invariant compact set of $V$, and has no singular points for $\e>0$ small enough. For $\e>0$  choose $q_\e\in\mathcal{R}_\e$ from the \textit{Poincar\'{e}--Bendixson Theorem} $\Gamma_\e:=\omega(q_\e)\subset \mathcal{R}_\e$ is a limit cycle of $Z_{\e}^{\Phi}$. Furthermore, $\Gamma_{\e}\subset V\subset \U$ for $\e>0$ small enough, hence $\Gamma_{\e}$ converges to $\Gamma$.

 \section{$\Sigma$-Polycycles having several visible regular-tangential singularities.}\label{sec:Polycyclesk}
In this section, we perform a qualitative analysis of $\Sigma-$polycyles having several visible regular-tangential singularities and we obtain similar results to those in Sections \ref{sec:Polycycles} and \ref{sec:polycyclesb}. Consider the nonsmooth vector field $Z=(X^+,X^-)$ which admit a $\Sigma-$polycyle having $l$ regular-tangential singularities, $p_i\in\Sigma$, of order $n_i=2k_i,$ for $i=1,\cdots, l$ satisfying one and only one of the following properties:
\begin{enumerate}
	\item[(i)] for each $i=1,\cdots, l,$ there exists a curve $\gamma_i$ connecting $p_i$ and $p_{i+1},$ oriented from $p_i$ to $p_{i+1},$ such that $\gamma_i\backslash\{p_i, p_{i+1}\}$ is a regular orbit of $Z$, $\gamma_i$ is tangent to $\Sigma$ at $p_i$ and $p_{i+1},$ where $p_{l+1}=p_1$ (see Figure \ref{figpolk1}). Locally around each $p_i$, $Z$ satisfies the same conditions of polycycles of type $(a).$
	
	\begin{figure}[h]
	\begin{center}
    \begin{overpic}[scale=0.3]{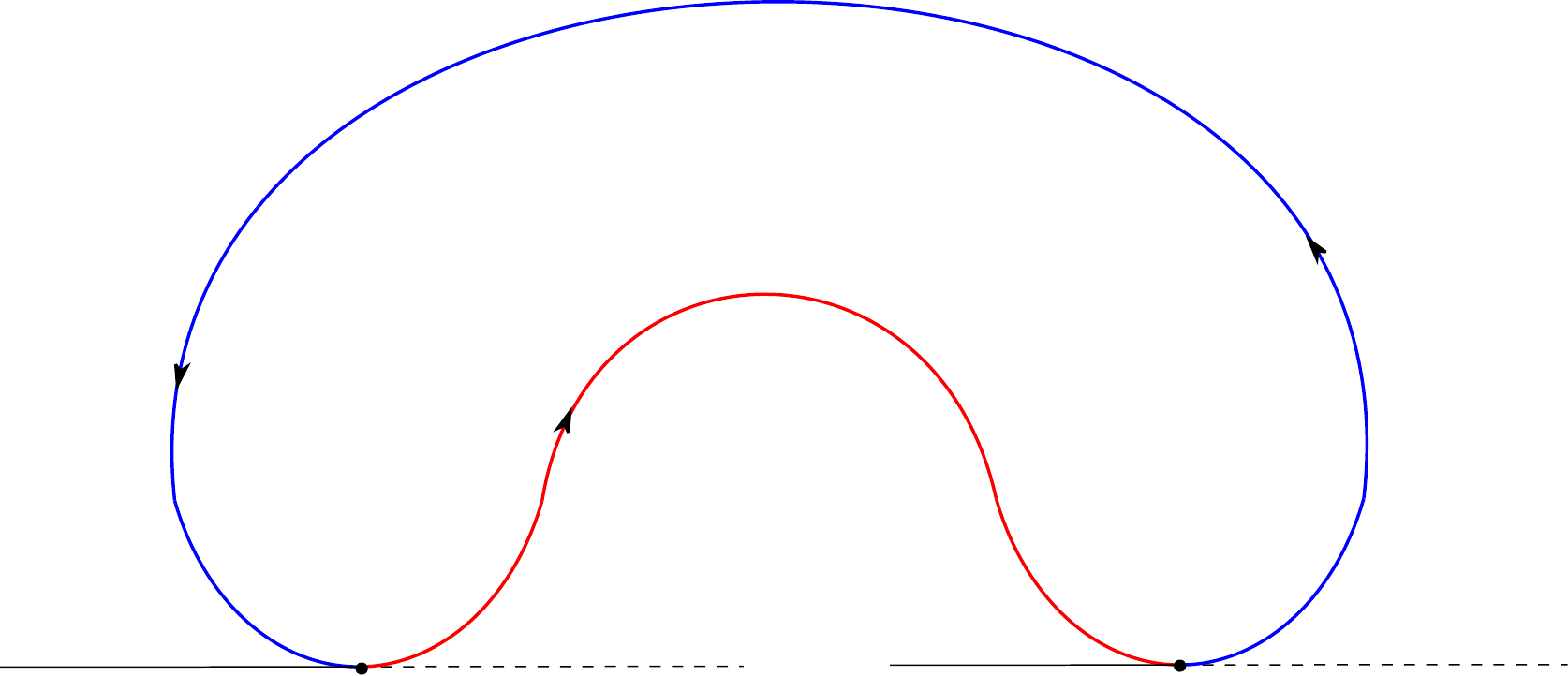}
		\put(80,36){$\Gamma$}
		\put(73,-3){$p_{2}$}
		\put(101,0){$\Sigma$}
		\put(47,18){$\gamma_1$}
		\put(47,39){$\gamma_2$}
		\put(21,-3){$p_{1}$}
		\end{overpic}
		\caption{\tiny{Example of a polyclycle of type $(i)$.}}
	\label{figpolk1}
	\end{center}
	\end{figure}
	
	\item[(ii)] for each $i=1,\cdots, l,$ there exists a curve $\gamma_i$ connecting $p_i$ and $p_{i+1},$ oriented from $p_i$ to $p_{i+1},$ such that $\gamma_i\backslash\{p_i, p_{i+1}\}$ is a regular orbit of $Z$, $\gamma_i$ is tangent to $\Sigma$ at $p_i$ and transversal to $\Sigma$ at $p_{i+1},$ where $p_{l+1}=p_1$ (see Figure \ref{figpolk2}). Locally around each $p_i$, $Z$ satisfies the same conditions of polycycles of type $(b).$
	
	\begin{figure}[h]
	\begin{center}
    \begin{overpic}[scale=0.3]{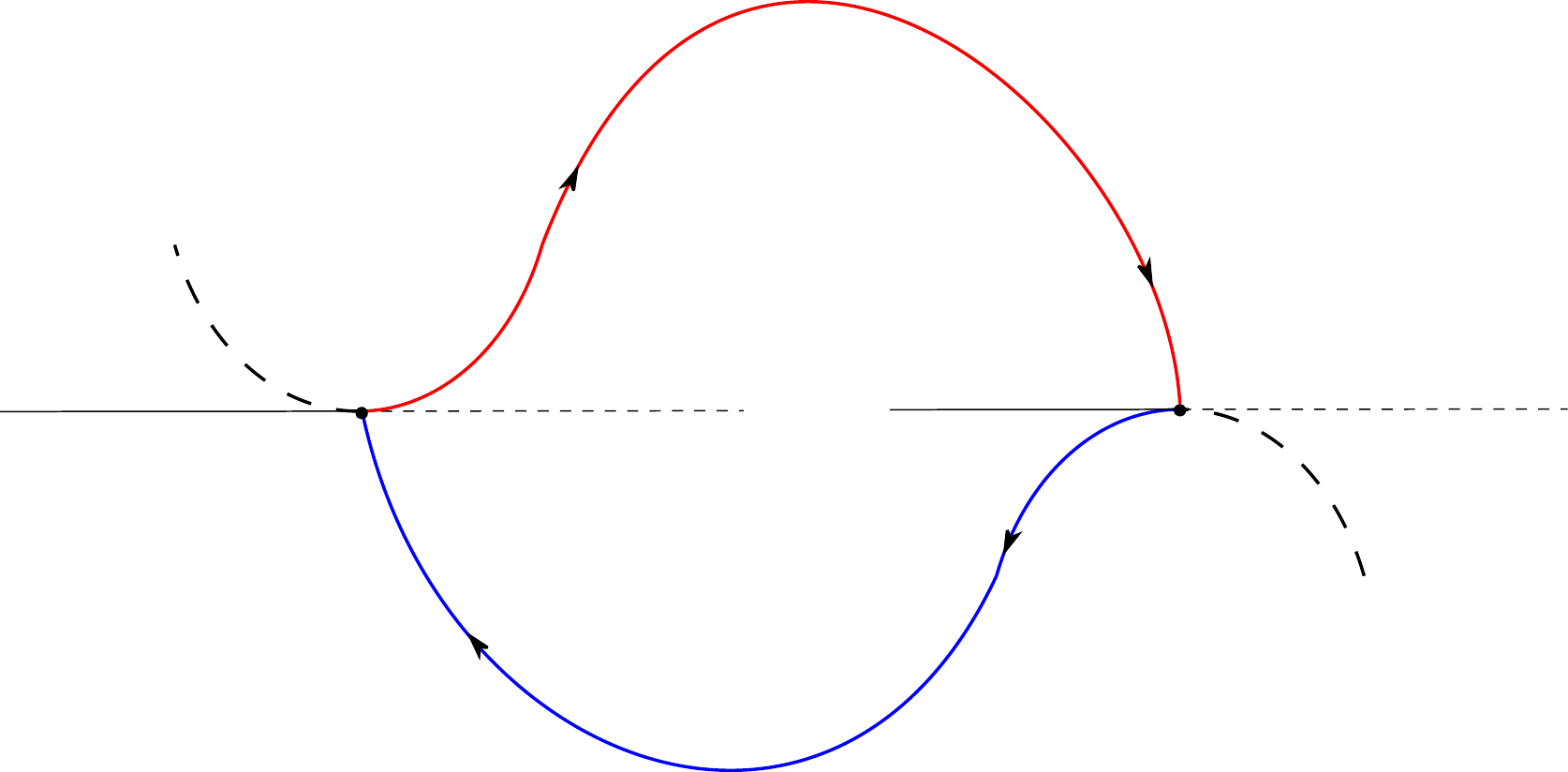}
		\put(78,34){$\Gamma$}
		\put(73,17){$p_{2}$}
		\put(101,0){$\Sigma$}
		\put(50,44){$\gamma_1$}
		\put(44,2){$\gamma_2$}
		\put(19,17){$p_{1}$}
		\end{overpic}
		\caption{\tiny{Example of a polyclycle of type $(ii)$.}}
	\label{figpolk2}
	\end{center}
	\end{figure}
	
\end{enumerate}

In what follows, without loss of generality, we assume that $h(x, y)=y,$ $p_1=(0,0),$ and $p_i=(a_i,0),$ $i=2,\cdots, l.$ 
First, we state the following lemma which is the generalization of the  lemma \ref{y0} and whose proof follows the same steps as the proof of Lemma \ref{y0}. 

\begin{lemma}\label{y0g} For each $i\in\{1,\cdots,l\}$ assume that $X^\pm$ has a visible  $n_i$-order contact with $\Sigma$ at $p_i=(a_i,0)$.
\begin{enumerate}
\item  For $\rho,\T>0,$ and $\e>0$ sufficiently small, the trajectory of $X^+$ starting at $p_i$ intersects transversally the sections $\{x=-\rho+a_i\},$ $\{x=\T+a_i\},$ and $\{y=\e\},$ respectively, at $(-\rho+a_i,\ov y_{-\rho}),$ $(\T+a_i,\ov{y}_{\T}),$ and $(\ov{x}^{\pm}_{\e}+a_i,\e),$ where $\ov y_{x}$ and $\ov{x}^{\pm}_{\e}$ were defined in \eqref{secpoints}.
\item  For $\rho,\T>0,$ and $\e>0$ small enough, the trajectory of $X^-$ starting at $p_i$ intersects transversally the sections $\{x=\rho+a_i\},$ $\{x=-\T+a_i\},$ and $\{y=-\e\},$ respectively, at $(\rho+a_i,\ov y_{-\rho}),$ $(-\T+a_i,\ov{y}_{\T}),$ and $(\widebar{x}^{\pm}_{\e}+a_i,-\e),$ where $\ov y_{x}=\frac{\al\, x^{n_i}}{n_i}+\CO(x^{n_i+1})$ and $\widebar{x}^{\pm}_{\e}=\mp\e^{\frac{1}{n_i}}\left(-\frac{n_i}{\alpha}\right)^{\frac{1}{n_i}}+\mathcal{O}(\e^{1+\frac{1}{n_i}})$, with $\alpha<0$.
\end{enumerate}
\end{lemma}

\subsection{$\Sigma-$polycycles of type $(i)$}
We start by proving that the first return map is given by $\pi_\Gamma(y)=Ky+\CO(y^2),$ for $y\in[0,\eta_1),$ with $\eta_1,K$ positive constants. For that, take $\rho,\T>0$ small enough in order that the points $q_i^{u}=(\T+a_i,\ov{y}_\T)\in W^{u}_t(p_i)$ and $q_i^{s}=(-\rho+a_i,\ov{y}_{-\rho})\in W^{s}_t(p_i)$ are contained in $U_i$, for all $i\in\{1,\cdots,l\}$. Given $\delta^{u,s}_i$ and $\eta_i$ positive numbers, since $\pi_1\circ X^+(q_i^{u,s})\neq 0$ and $\pi_1\circ X^+(p_i)\neq 0$, then 
\[\begin{array}{l}
\tau^{u}_{t,i}=\{(\T+a_i,y):y\in(\ov{y}_\T-\delta^{u}_i,\ov{y}_\T+\delta^{u}_i)\},\\
\tau^{s}_{t,i}=\{(-\rho+a_i,y):y\in(\ov{y}_{-\rho}-\delta^{s}_i,\ov{y}_{-\rho}+\delta^{s}_i)\},\\
\sigma_{p_i}=\{a_i\}\times[0,\eta_i),
\end{array}\] are transversal sections of $X^+$ at the points $q_i^{u,s}$ and $p_i$ respectively. In addition, we know by \textit{Tubular Flow Theorem} that there exist the $C^{n_i}-$diffeomorphisms $T^{u,s}_i:\sigma_{p_i}\longrightarrow\tau^{u,s}_{t,i}$ and $D_i:\tau^{u}_{t,i}\longrightarrow\tau^{s}_{t,i+1}$ such that $T_i^{u,s}(p_i)=q_i^{u,s}$ and $D_i(q_i^u)=q_{i+1}^s$, with $q_{l+1}^s=q_{1}^s$. So, expanding in Taylor series $T_i^{u,s}$ and $D_i^{-1}$ around $y=a_i$ and $y=\ov{y}_{-\rho}$, respectively, we have 
\begin{equation}\label{Tsudi}
\begin{split}
T^{s}_i(y)&=\ov{y}_{-\rho}+\kappa_{\rho,i}^{s}(y-a_i)+\CO((y-a_i)^{2}),\\
T^{u}_i(y)&=\ov{y}_\T+\kappa_{\T,i}^{u}(y-a_i)+\CO((y-a_i)^{2}),\\
D^{-1}_i(y)&=\ov{y}_\T+\frac{1}{r_{\T,\rho}^i}(y-\ov{y}_{-\rho})+\CO((y-\ov{y}_{-\rho})^2),
\end{split}
\end{equation}
%
with $\kappa_{\rho,i}^{s}=\frac{d T_i^{s}}{dy}(0),$ $\kappa_{\T,i}^{u}=\frac{d T_i^{u}}{dy}(0)$ and $r_{\T,\rho}^i=\frac{d D_i}{dy}(\ov{y}_{\T}).$ Even more, one can easy see that for each $i\in\{1,\dots,l\},$ we have that $r_{\T,\rho}^i, \kappa_{\rho,i}^{s},\kappa_{\T,i}^{u}>0$  for all $\T,\rho>0.$

\begin{proposition}
If $Z=(X^+,X^-)$ has a $\Sigma-$polycycle of type $(i)$. Then 
\begin{equation}\label{pi_K}
\frac{\p\pi_\Gamma}{\p y}(0)>0.
\end{equation}
\end{proposition}
\begin{proof}
Let $\pi_\Gamma$ be the first return map associated with the $\Sigma$-polycycle $\Gamma$ of $Z$ defined as
$$\pi_\Gamma(y)=(D_l^{-1}\circ T_1^s)^{-1}\circ T_l^u\circ(D_{l-1}^{-1}\circ T_l^s)^{-1}\circ T_{l-1}^u\circ\cdots\circ(D_1^{-1}\circ T_2^s)^{-1}\circ T_1^u(y).$$
From \eqref{Tsudi}, we get $D_{i-1}^{-1}\circ T_i^s(y)=\ov{y}_\T+\frac{\kappa_{\rho,i}^s}{r^{i-1}_{\T,\rho}}(y-a_i)+\CO((y-a_i)^{2}).$ Thus, using the \textit{Implicit Function Theorem}, we have that
	\begin{equation}\label{DT_inv_i}
	(D_{i-1}^{-1}\circ T_i^s)^{-1}(y)=a_i+\frac{r^{i-1}_{\T,\rho}}{\kappa_{\rho,i}^s}(y-\ov{y}_\T)+\CO((y-\ov{y}_\T)^2).
	\end{equation}
	Therefore, using \eqref{Tsudi} and \eqref{DT_inv_i} we conclude that 
	\begin{equation}\label{pi_nK_i}
	\pi_\Gamma(y)=\prod_{i=2}^{l+1}\frac{r^{i-1}_{\T,\rho}\kappa_{\T,i-1}^u}{\kappa_{\rho,i}^s}y+\CO{(y^{2})}.
	\end{equation}
	Consequently, taking $K:=\prod\limits_{i=2}^{l+1} K_{i-1},$ where $K_{i-1}=\frac{r^{i-1}_{\T,\rho}\kappa_{\T,i-1}^u}{\kappa_{\rho,i}^s}$ we get \eqref{pi_K}.
\end{proof}
The following lemma is a direct consequence of the lemma  \ref{rTR}.
\begin{lemma}\label{rTR_i}
Consider $r^i_{\T,\rho}$ given as in \eqref{Tsudi}, with $i\in\{1,\dots,l\}$. Then, $\lim\limits_{\T,\rho \to 0} r^i_{\T,\rho}=K_i.$
\end{lemma}
Now, for each $i\in\{1,\dots,l\}$ the exterior map associated to $Z_{\e}^{\Phi}$ around $p_ i$ is given by 
\begin{equation}\label{T_eps}
D^i_\e(y)=D_i(y)+\e S_i+\CO(\e^2),
\end{equation}
where $S_i:=\frac{\partial D^i_\e(0)}{\partial\e}.$

In what follows, we will state the main theorem of this session.
\begin{mtheorem}\label{tc2_i}
Consider a Filippov system $Z=(X^+,X^-)_{\Sigma}$ and assume that $Z$ has a polyclycle $\Gamma,$ which is of type $(i)$. For $n\geq\max\limits_{1\leq i\leq l}\{n_i\}-1,$ let $\Phi\in C^{n-1}_{ST},K,S_l$ be given as $\eqref{Phi},$ $\eqref{pi_nK_i}$ and $\eqref{T_eps}$ respectively, $\lambda_i^*=\frac{n}{1+n_i(n-1)},$ and consider the regularized system $Z_{\e}^{\Phi}$ \eqref{regula}. If $K_l+S_l-1\neq 0,$ then the following statements hold.
\begin{enumerate}
	\item[(a)] Given $0<\la<\min\left\{\frac{n_l\lambda_l^*}{n_1},\la_l^*\right\},$ if $K_l+S_l-1>0$, then there exists $\rho>0$ such that  the regularized system $Z_{\e}^{\Phi}$ does not admit limit cycles passing through the section  $\widehat H_{\rho,\la}^{\e,1}=[-\rho,-\e^{\la}]\times\{\e\},$ for $\e>0$ sufficiently small.
	\item[(b)] Given $\frac{1}{n_1}<\la<\la_l^*,$ if $K_l+S_l-1<0$, then there exists $\rho>0$ such that the regularized system $Z_{\e}^{\Phi}$ admits a unique limit cycle $\Gamma_{\e}$ passing through the section $\widehat H_{\rho,\la}^{\e,1}=[-\rho,-\e^{\la}]\times\{\e\},$ for $\e>0$ sufficiently small. Moreover, $\Gamma_{\e}$ is asymptotically stable and $\e$-close to $\Gamma.$ 
\end{enumerate}
\end{mtheorem}
\begin{proof}
For each $i\in\{1,\dots,l\}$, from Theorem \ref{ta1} of Appendix B for $\T=x_\e+a_i$, there exist $\rho^i_0>0$ and constants $\beta_i<0$ and $c,r,q>0$ such that for every $\rho\in(\e^\la,\rho^i_0],$ $\la\in(0,\la_i^*),$ and $\e>0$ sufficiently small, the flow of $Z_{\e}^{\Phi}$ defines a map $U^i_{\e}$ between the transversal sections $\widehat V_{\rho,\la}^{\e,i}=\{-\rho+a_i\}\times [\e,y_{\rho,\la}^{\e,i}]\,\,\text{ and }\,\, \widetilde V_{x_\e}^{\e,i}=\{x_\e+a_i\}\times[y_{x_\e}^{\e,i},y_{x_\e}^{\e,i}+r e^{-\frac{c}{\e^q}}]$ satisfying
\begin{equation}\label{Ue_i}
\begin{array}{cccl}
U^i_{\e}:& \widehat V_{\rho,\la}^{\e,i}& \longrightarrow& \widetilde V_{x_\e}^{\e,i}\\
&y&\longmapsto&y_{x_\e}^{\e,i}+\CO(e^{-\frac{c}{\e^q}}),
\end{array}
\end{equation}
where $y_{x_\e}^{\e,i} =\ov y_{x_\e}+\e+\mathcal{O}(\e^{n_i\la_i^*}).$ 
		
		Now, notice that there exists $\e_0>0$ such that $\widetilde V_{x_\e}^{\e,i}\subset \tau_{t,i}^u,$ for all $\e\in[0,\e_0].$ 
		In this way, for $\e\in[0,\e_0],$ we define the function 
		$$\pi_\e(y)=D^l_\e\circ U^l_{\e}\circ D^{l-1}_\e\circ U^{l-1}_{\e}\circ\dots\circ D^1_\e\circ U^1_{\e}(y).$$
		 Hence, from \eqref{T_eps} and \eqref{Ue_i}, we get
		\begin{equation}\label{Piea_i}
		\begin{array}{lllll} \pi_{\e}(y) 
		& = &\displaystyle D^l\Big(\overline{y}_{x_\e}+\e+\mathcal{O}\Big(\e^{n_l\la_l^*}\Big)\Big)+\e S_l+\CO(\e^2)\\
		& = &\displaystyle \overline{y}_{-\rho}+r^l_{x_\e,\rho}\Big(\e+\mathcal{O}\Big(\e^{n_l\la_l^*}\Big)\Big)+\mathcal{O}\Big(\e+\mathcal{O}\Big(\e^{n_l\la_l^*}\Big)\Big)^2+\e S_l+\CO(\e^2)\\
		& = &\displaystyle \overline{y}_{-\rho}+\Big(r^l_{x_\e,\rho}+S_l\Big)\e+\mathcal{O}\Big(\e^{n_l\la_l^*}\Big).\\
		\end{array}
		\end{equation}
Using \eqref{Piea_i} and \eqref{ye}, we have
		 $$\pi_\e(y)-y^{\e,1}_{\rho,\la}=\left(r^l_{x_\e,\rho}+S_l-1\right)\e+\mathcal{O}(\e\rho)-\beta_1 \e^{n_1\la}+\mathcal{O}(\e^{(n_1+1)\la})+\mathcal{O}(\e^{1+\la})+\mathcal{O}\Big(\e^{n_l\la_l^*}\Big).$$ Hence, we must study the limit $\lim\limits_{\rho,\e\rightarrow 0}\frac{\pi_\e(y)-y^\e_{\rho,\la}}{\e}$ in three distinct cases.

	 First, assume that $\la>\frac{1}{n_1}.$ Then,
		$$\frac{\pi_\e(y)-y^{\e,1}_{\rho,\la}}{\e}=r^l_{x_\e,\rho}+S_l-1+\mathcal{O}(\rho)-\beta_1 \e^{n_1\la-1}+\mathcal{O}(\e^{(n_1+1)\la-1})+\mathcal{O}(\e^{\la})+\mathcal{O}\Big(\e^{n_l\la_l^*-1}\Big).$$ Thus, by Lemma \ref{rTR_i},
		\begin{equation}\label{eql1_i}	
		\lim\limits_{\rho,\e\rightarrow 0}\frac{\pi_\e(y)-y^{\e,1}_{\rho,\la}}{\e}=K_l+S_l-1.\end{equation}
		
		Now, suppose that $\la<\frac{1}{n_1}.$ Then,
		$$\frac{\pi_\e(y)-y^{\e,1}_{\rho,\la}}{\e^{n_1\la}}=(r^l_{x_\e,\rho}+S_l-1)\e^{1-n_1\la}+\mathcal{O}(\e^{1-n_1\la}\rho)-\beta_1+\mathcal{O}(\e^{\la})+\mathcal{O}\Big(\e^{n_l\la_l^*-n_1\la}\Big).$$ Hence, by Lemma \ref{rTR_i},
		\begin{equation}\label{eql2_i}	
		\lim\limits_{\rho,\e\rightarrow 0}\frac{\pi_\e(y)-y^{\e,1}_{\rho,\la}}{\e^{n_1\la}}=-\beta_1>0.\end{equation}
		
		Finally,  assume that $\la=\frac{1}{n_1}.$ Then, 
		$$\frac{\pi_\e(y)-y^{\e,1}_{\rho,\la}}{\e}=r^l_{x_\e,\rho}+S_l-1-\beta_1+\mathcal{O}(\rho)+\mathcal{O}(\e^{\la})+\mathcal{O}\Big(\e^{n_l\la_l^*-n_1\lambda}\Big).$$ Thus, by Lemma \ref{rTR_i},
		\begin{equation}\label{eql3_i}	
		\lim\limits_{\rho,\e\rightarrow 0}\frac{\pi_\e(y)-y^{\e,1}_{\rho,\la}}{\e}=K_l+S_l-1-\beta_1.\end{equation}
	
		Now, we prove statement $(a)$ of Theorem \ref{tc2_i}. As $K_l+S_l-1>0,$ then all the above limits \eqref{eql1_i}, \eqref{eql2_i} and \eqref{eql3_i}, are strictly positive and, since $\e>0,$ there exists $\delta_0>0$ such that 
		$$0<\rho,\e<\delta_0\hspace{0.1cm}\Rightarrow\hspace{0.1cm}\pi_\e(y)-y^{\e,1}_{\rho,\la}>0.$$		
		Therefore, $\pi_\e([\e,y^{\e,1}_{\rho,\la}])\cap[\e,y^{\e,1}_{\rho,\la}]=\emptyset,$ for all $\e\in(0,\delta_0).$ This means that $\pi_\e$ has no fixed points in $[\e,y^{\e,1}_{\rho,\la}]$, i.e. the regularized system $Z_{\e}^{\Phi}$ does not admit limit cycles passing through the section $\widehat H_{\rho,\la}^{\e,1}.$
		
		 Now, we prove statement $(b)$ of Theorem \ref{tc2_i}. In this case, $\la>\frac{1}{n_1}.$  As $K_l+S_l-1<0,$ then the limit \eqref{eql1_i} is strictly negative and, since $\e>0,$  there exists $\delta_0>0$ such that 
		$0<\rho,\e<\delta_0\hspace{0.1cm}\Rightarrow\hspace{0.1cm}\pi_\e(y)-y^{\e,1}_{\rho,\la}<0.$ Consequently, $\pi_\e(y)<y^{\e,1}_{\rho,\la}.$ Moreover, from \eqref{Piea_i}, we get
		\begin{equation*}\label{eql4}	
		\lim\limits_{\e\rightarrow 0}\pi_\e(y)-\e=\ov{y}_{-\rho}>0,\end{equation*}
		for all $\rho>0.$ Since $\e>0,$  there exists $\delta_1>0$ such that 
		$0<\e<\delta_1\hspace{0.1cm}\Rightarrow\hspace{0.1cm}\pi_\e(y)-\e>0.$ Accordingly, $\pi_{\e}(y)>\e,$ for $\e>0$ sufficiently small. This means that $\pi_\e([\e,y^{\e,1}_{\rho,\la}])\subset[\e,y^{\e,1}_{\rho,\la}].$  By the {\it Brouwer Fixed Point Theorem}, we can conclude that $\pi_\e$ admits fixed points in $[\e,y^{\e,1}_{\rho,\la}]$, i.e. the regularized system $Z_{\e}^{\Phi}$ has periodic orbits passing through the section $\widehat H_{\rho,\la}^{\e,1}.$
		
Following the same steps of the proof of Theorem \ref{tc1}, it is easy to see that $\pi_\e$ is a contraction for $\e>0$ small enough. From the \textit{Banach Fixed Point Theorem}, $\pi_\e$ admits a unique asymptotically stable fixed point for $\e>0$ small enough. Therefore, $Z_{\e}^{\Phi}$ has a unique asymptotically stable limit cycle $\Gamma_{\e}$ passing through the section $\widehat H_{\rho,\la}^{\e,1},$ for $\e>0$ sufficiently small. Moreover, since $\pi_\e(y)-\ov{y}_{-\rho}=\mathcal{O}(\e)$ for all $y\in[\e,y_{\rho,\la}^{\e}]$ and $x_{\e}-\ov{x}^{+}_{\e}=\mathcal{O}(\e^{\frac{1}{2k}}),$ we get from differentiable dependency results on parameters and initial condition that $\Gamma_\e$ is $\e$-close to $\Gamma.$

\end{proof}
\subsection{$\Sigma-$polycycles of type $(ii)$}
First, we shall see that there exist positive constants $\T$ and $K$ such that the first return map 
is $\pi_\Gamma(x)=Kx^{N-1}+\CO(x^{N})$, for $x\in[0,\T],$ where $N=n_1+\dots+n_l+1$. For that, take $\e,\T>0$ small enough in order that the points $q_i^{u,\pm}=(\pm\T+a_i,\ov{y}_\T)\in W^{u}_t(p_i)$ and $q_i^{s,\pm}=(a_i,\pm\e)\in W^{s}_{\ptt}(p_i)$ are contained in $U_i$, for all $i\in\{1,\dots,l\}$. Then there exist positive numbers $\delta_{i}^{u}$ and $\delta_{i,\e}^{s}$ such that
\begin{equation*}
\begin{array}{l}
\tau^{u,\pm}_{t,i}=\{(\pm\T+a_i,y):y\in(\ov{y}_\T-\delta_{i}^{u},\ov{y}_\T+\delta_{i}^{u})\} \,\text{and}\\
\tau^{s,\pm}_{\ptt,i}=\{(x+a_i,\pm\e):x\in(-\delta_{i,\e}^{s},\delta_{i,\e}^{s})\}\\
\end{array}\end{equation*} are transversal sections of $X^+$ and $X^{-}$. In addition, $\sigma^+_{p,i}=[a_i,\T+a_i]\times\{0\}$ and $\sigma^-_{p,i}=[a_i-\T,a_i]\times\{0\}$ are  transversal sections of $X^{-}$. Moreover, by the \textit{Tubular Flow Theorem} there exist the $C^{n_i}-$diffeomorphism $T_i^{s,\pm}:\sigma^\pm_{p,i}\longrightarrow\tau^{s,\mp}_{\ptt,i}$ and $D_i^\pm:\tau^{u,\pm}_{t,i}\longrightarrow\tau^{s,\pm}_{\ptt,i+1}$ such that $T_i^{s,\pm}(p_i)=q_i^{s,\pm}$ and $D_i^\pm(q_i^{u,\pm})=q_{i+1}^{s,\pm}$. Thus, expanding $D_i^\pm$ around $y=\ov{y}_{\T}$, we get
\begin{equation}\label{Dii_ii}
D_i^\pm(y)=a_{i+1}+r^{i,\pm}_{\T,\e}(y-\ov{y}_{\T})+\CO((y-\ov{y}_{\T})^2),
\end{equation}
where $r^{i,\pm}_{\T,\e}=\frac{d D_{i}^\pm}{dy}(\ov{y}_{\T}).$

Now, expanding $T_i^{s,\pm}$ around $x=a_i$, we have 
\begin{equation}\label{Tsd2_ii}
T_i^{s,\pm}(x)=a_i+\kappa_{\e,i}^{s,\pm}(x-a_i)+\CO((x-a_i)^{2}),
\end{equation}
with $\kappa_{\e,i}^{s,\pm}=\frac{d T_i^{s,\pm}}{dx}(a_i).$ In addition, using \eqref{Dii_ii} and the \textit{Implicit Function Theorem}, we get
\begin{equation}\label{Dinv2_ii}
(D_i^{-1})^\pm(x)=\ov{y}_\T+\frac{1}{r^{i,\pm}_{\T,\e}}(x-a_{i+1})+\CO((x-a_{i+1})^2).
\end{equation}
Since $X^+$ and $X^-$ are planar vector fields, the uniqueness of solutions implies that  $r^{i,\pm}_{\T,\e},\kappa_{\e,i}^{s,\pm}>0$  for all $\T,\e>0$ sufficiently small.

We remark that the arc-orbit connecting $p_i$ with $q_i^{u,\pm}$ is contained in $\Sigma^\pm$. Then, from \cite[Theorem A]{AndGomNov19} we know that there exists a transition map $T_i^{u,\pm}:\sigma^\pm_{p,i}\longrightarrow\tau^{u,\pm}_{t,i}$ defined as
\begin{equation}\label{Tsu2_ii}
\begin{split}
T_i^{u,\pm}(x)&=\ov{y}_\T+\kappa_{\T,i}^{u,\pm}(x-a_i)^{n_i}+\CO((x-a_i)^{n_i+1}),\\
\end{split}
\end{equation}
where $\sgn(\kappa_{\T,i}^{u,\pm})=-\sgn((X^{\pm})^{n_i}h(p_i))$, i.e. $\kappa_{\T,i}^{u,+}<0$ and $\kappa_{\T,i}^{u,-}>0$.

\begin{proposition}
If $Z=(X^+,X^-)$ has a $\Sigma-$polycycle of type $(ii)$. Then 
\[
\frac{\p^{i}\pi_\Gamma}{\p x^{i}}(0)=0, \,\text{for all}\,i\in\{1,\dots,N-2\} \quad \text{and}\quad \frac{\p^{N-1}\pi_\Gamma}{\p x^{N-1}}(0)\neq 0.
\]
where $N=n_1+\dots+n_l+1.$
\end{proposition}
\begin{proof}
Notice that the first return map associated with the $\Sigma$-polycycle $\Gamma$ of $Z$
is given by
$$\pi(y)=(D_l^{-1}\circ T_1^s)^{-1}\circ T_l^u\circ(D_{l-1}^{-1}\circ T_l^s)^{-1}\circ T_{l-1}^u\circ\cdots\circ(D_1^{-1}\circ T_2^s)^{-1}\circ T_1^u(y)$$
From \eqref{Tsd2_ii} and \eqref{Dinv2_ii}, we get $D_{i-1}^{-1}\circ T_i^s(x)=\ov{y}_\T+\frac{\kappa_{\e,i}^{s\pm}}{r^{{i-1},\pm}_{\T,\e}}(x-a_i)+\CO((x-a_i)^{2}).$ Thus, using the \textit{Implicit Function Theorem}, we have that
	\begin{equation}\label{DT_inv_ii}
	(D_{i-1}^{-1}\circ T_i^s)^{-1}(y)=a_i+\frac{r^{i-1,\pm}_{\T,\e}}{\kappa_{\e,i}^{s,\pm}}(y-\ov{y}_\T)+\CO((y-\ov{y}_\T)^2).
	\end{equation}
	Therefore, using \eqref{Tsu2_ii} and \eqref{DT_inv_ii} we conclude that 
	\begin{equation*}
	\pi_\Gamma(y)=\prod_{i=2}^{l+1}\frac{r^{i-1,\pm}_{\T,\e}\kappa_{\T,i-1}^{u,\pm}}{\kappa_{\e,i}^{s,\pm}}x^{n_{i-1}}+\CO{(x^{N})}.
	\end{equation*}
	Consequently, taking $K:=\prod\limits_{i=2}^{l+1}K_{i-1},$ where $K_{i-1}=\frac{r^{i-1,\pm}_{\T,\e}\kappa_{\T,i-1}^{u,\pm}}{\kappa_{\e,i}^{s,\pm}},$ we get the result.
\end{proof}
\begin{remark}\label{Gstable_ii}
Notice that for $x\in[0,\T]$ we have that $|\pi_\Gamma(x)|<|x|,$ for some small $\T>0$. This means that $\Gamma$ is always asymptotically stable.
\end{remark}
The following lemma is a direct consequence of the lemma  \ref{lims}.
\begin{lemma}\label{lims_ii} Consider $r^{i,\pm}_{\T,\e}$ and $\kappa_{\T,i}^{u,\pm}$ given as in \eqref{Dii_ii} and \eqref{Tsu2_ii}, respectively. Then, for each $i\in\{1,\dots,l\}$
\begin{enumerate}
\item[i)] $\lim\limits_{\T \to 0}\kappa_{\T,i}^{u,\pm}=-\frac{\al_i}{n_i}.$
\item [ii)] $\lim\limits_{\T,\e \to 0}r^{i,\pm}_{\T,\e}=-\frac{n_iK_{i}}{\al_i}.$
\end{enumerate}
\end{lemma}
Now, since $D_i$ is a diffeomorphism induced by a regular orbit, we can easily see that the regularized system  $Z_{\e}^{\Phi}$ also admits an exterior map given by 
\begin{equation}\label{Tepsb_ii}
D^{i,\pm}_\e(y)=D_{i}^\pm(y)+\CO(\e).
\end{equation}
In what follows, we shall state the main result of this section.

\begin{mtheorem}\label{tc2_ii}
Consider a Filippov system $Z=(X^+,X^-)_{\Sigma}$ and assume that $Z$ has a $\Sigma-$polycycle $\Gamma$ of type $(ii).$ For $n\geq\max\limits_{1\leq i\leq l}\{n_i\}-1,$ let $\Phi\in C^{n-1}_{ST}$ be given as $\eqref{Phi},$ $\la_i^*=\frac{n}{1+n_i(n-1)},$ and consider the regularized system $Z_{\e}^{\Phi}$ \eqref{regula}. Then $Z_{\e}^{\Phi}$  admits at least a limit cycle $\Gamma_{\e},$ for $\e>0$ small enough. Moreover, $\Gamma_{\e}$ converges to $\Gamma.$
\end{mtheorem}
\begin{proof}
Let $\U$ be an open set such that $\Gamma\subset \U.$ Since $\Gamma$ is a $\Sigma-$polycycle of $Z$, then it is easy to see that there exists an open set $V\subset \U$ such that $V$ has no singular points of $Z_{\e}^{\Phi}$. Taking $(\delta,0)\in V,$ we get that $(\delta,\e)\in V,$ for $\e>0$ small enough.

Now, define the map $\widetilde{D}_\e(x)=\pi_\Gamma(x)+\CO(\e),$ for all $x\in I_\delta^\e,$ where $I_\delta^\e$ is a neighborhood of $\delta.$ By Remark \ref{Gstable_ii} we know that $\pi_\Gamma(\delta)<\delta,$ then we can conclude that $\widetilde{D}_\e(\delta)<\delta,$ for $\e>0$ sufficiently small. 

For each $i\in\{1,\dots,l\}$, by Theorem \ref{tb1} of Appendix B for $\T=a_i+\ov{x}^{+}_{\e}$,  
there exist $\rho^i_0>0,$ and constants $c,r,q>0$ such that for every $\rho\in(\e^\la,\rho^i_0],$ 
$\la\in(0,\la^*_i),$ and $\e>0$ sufficiently small, the flow of $Z_{\e}^{\Phi}$ defines a map $L^{i,+}_{\e}$ (resp. $L^{i,-}_{\e}$) between the transversal sections $\widecheck H_{\rho,\la}^{\e,i,+}=[-\rho+a_i,-\e^{\la}+a_i]\times\{-\e\}$ (resp. $\widecheck H_{\rho,\la}^{\e,i,-}=[\e^{\la}+a_i,\rho+a_i]\times\{\e\}$) and $\widecheck V_{\ov{x}^{+}_{\e}}^{\e,i,+}=\{\ov{x}^{+}_{\e}+a_i\}\times[y_{\ov{x}^{+}_{\e}}^{\e,i}-r e^{-\frac{c}{\e^q}},y_{\ov{x}^{+}_{\e}}^{\e,i}]$ (resp. $\widecheck V_{\ov{x}^{+}_{\e}}^{\e,i,-}=\{\ov{x}^{+}_{\e}+a_i\}\times[y_{\ov{x}^{+}_{\e}}^{\e,i},y_{\ov{x}^{+}_{\e}}^{\e,i}+r e^{-\frac{c}{\e^q}}]$) satisfying
\begin{equation}\label{le_ii}
\begin{array}{cccl}
L^{i,\pm}_{\e}:& \widecheck H_{\rho,\la}^{\e,i,\pm}& \longrightarrow& \widecheck V_{\ov{x}^{+}_{\e}}^{\e,i,\pm}\\
&x&\longmapsto&y_{\ov{x}^{+}_{\e}}^{\e,i}+\CO(e^{-\frac{c}{\e^q}}),
\end{array}
\end{equation}
where  $y_{\ov{x}^{+}_{\e}}^{\e,i}=\ov{y}_{\ov{x}^{+}_{\e}}+\e+\CO(\e^{1+\frac{1}{n_i}})+\CO(\e^{1+\la_i^*})+\CO(\e^{n_i\la_i^*})$.

Since $\widecheck V_{\ov{x}^{+}_{\e}}^{\e,i,\pm}\subset\tau_{t,i}^{u,\pm}$ and $\tau_{t,i}^{s,\pm}\subset\widecheck H_{\rho,\la}^{\e,i,\mp}$, for $\e>0$ sufficiently small, then we can consider the map $\pi_\e=D^{l,-}_\e\circ L^{l,-}_\e\circ\dots\circ D^{1,+}_\e\circ L^{1,+}_\e:\widecheck H_{\rho,\la}^{\e,1,+}\longrightarrow\tau^{s,-}_{\ptt,1}.$ 
 We claim that $\pi_\e(-\e^\la)>-\e^\la.$ Indeed, from \eqref{Tepsb_ii} and \eqref{le_ii}, we get

		\begin{equation*}
		\begin{array}{lllll} \pi_\e(-\e^\la) 
		& = &\displaystyle D^{l,-}_\e\Big(y^{\e,l}_{\ov{x}^{+}_{\e}}+\mathcal{O}(e^{-c/\e^q})\Big)\\
		& = &\displaystyle D_l^{-}\Big(\ov{y}_{\ov{x}^{+}_{\e}}+\e+\CO(\e^{1+\frac{1}{n_l}})+\CO(\e^{1+\la_l^*})+\CO(\e^{n_l\la_l^*})\Big)+\CO(\e)\\
		& = &\displaystyle r^{l,-}_{\ov{x}^{+}_{\e},\e}\e+\CO(\e^{1+\frac{1}{n_l}})+\CO(\e^{1+\la_l^*})+\CO(\e^{n_l\la_l^*})+\CO(\e).\\
		\end{array}
		\end{equation*}
Hence, 
$$ \pi_\e(-\e^\la)+\e^\la=\e^\la+r^{l,-}_{\ov{x}^{+}_{\e},\e}\e+\CO(\e^{1+\frac{1}{n_l}})+\CO(\e^{1+\la_l^*})+\CO(\e^{n_l\la_l^*})+\CO(\e),$$ i.e. 
\[\begin{array}{ll}\dfrac{\pi_\e(-\e^\la)+\e^\la}{\e^\la}=&1+r^{l,-}_{\ov{x}^{+}_{\e},\e}\e^{1-\la}+\CO(\e^{1+\frac{1}{n_l}-\la})+\CO(\e^{1+\la_l^*-\la})+\CO(\e^{n_l\la_l^*-\la})\vspace{0.2cm}\\
&+\CO(\e^{1-\la}).
\end{array}\]
Using Lemma \ref{lims_ii}, we get
\begin{equation*}
		\lim\limits_{\e\longrightarrow 0}\dfrac{\pi_\e(-\e^\la)+\e^\la}{\e^\la}=1>0.\end{equation*}
Since $\e>0,$ there exists $\e_0>0$ such that $0<\e<\e_0$ implies $\pi_\e(-\e^\la)+\e^\la>0.$ Hence, $\pi_\e(-\e^\la)>-\e^\la,$ for $\e>0$ sufficiently small. 

Now, let $\mathcal{R}_\e$ be the region delimited by the curves $y=\e,$ $y=-\e$ and the arc-orbits connecting $(\delta,\e)$ with $(\widetilde{D}_\e(\delta),\e)$ and $(-\e^\la,-\e)$ with $(\pi_\e(-\e^\la),-\e)$, respectively. It is easy to see that $\mathcal{R}_\e$ is positively invariant compact set of $V$, and has no singular points for $\e>0$ small enough. For $\e>0$  choose $q_\e\in\mathcal{R}_\e$ from the \textit{Poincar\'{e}--Bendixson Theorem} $\Gamma_\e:=\omega(q_\e)\subset \mathcal{R}_\e$ is a limit cycle of $Z_{\e}^{\Phi}$. Furthermore, $\Gamma_{\e}\subset V\subset \U$ for $\e>0$ small enough, hence $\Gamma_{\e}$ converges to $\Gamma$.
\end{proof}
\section*{Appendix A: The exterior map of the regularized system}\label{sec:appendix2}
In this section, we shall study the exterior map of the regularized system $Z_\e^\Phi$ and its derivative.

Let $Z=(X,Y)$ be a Filippov system, with $X,Y:V\subset\R^2\rightarrow\R^2$ vector fields of class $\mathcal{C}^{2k}$ defined on an open set $V$ of $q\in\R^2,$ and $Z_\e^\Phi$ the regularized system associated with $Z$. 
Assume that the switching manifold $\Sigma$ of $Z$ is a $\mathcal{C}^{2k}$ embedded codimension one submanifold of $V$ and let $q\in\Sigma^c$.

Notice that there exists a local $\mathcal{C}^{2k}$ diffeomorphism $\psi_{1}:U\subset\R^2\rightarrow\R^2$ defined on an open set $U$ of $q\in\R^2$ such that $\widetilde \Sigma=\psi_1(\Sigma)=h^{-1}(0),$ with $h(x,y)=x.$ Now, applying the \textit{Tubular Flow Theorem} for $(\psi_1)_*Y$ at $q$ and considering the transversal section $\widetilde \Sigma,$ there exists a local $\mathcal{C}^{2k}$ diffeomorphism $\psi_{2}$ defined on $U$ (taken smaller if necessary) such that $\widetilde Y=(\psi_2\circ\psi_1)_*Y=(1,0)$ and $\psi_{2}(\widetilde\Sigma)=\widetilde\Sigma.$ 

Since $X_1(q)>0$ then $X_1(x,y)> 0$ for all $(x,y)\in U$ (taken smaller if necessary). Consequently, we can perform a time rescaling in $X,$ thus we get $\widehat{X}(x,y)=(1,X_2(x,y)/X_1(x,y)),$ for all $(x,y)\in U.$ Hence, without loss of generality, we can assume that there exists an open set $U\subset\R^2$ of $q=(0,q_2)$ such that the Filippov system $Z=(X,Y)_{\Sigma}$ satisfies that $X\big|_{U}=(1,X_2),$ $Y\big|_{U}=(1,0),$ and $\Sigma\cap U=\{(0,y):\, y\in(q_2-\delta_U,q_2+\delta_U)\}.$

Take $\rho>0$ and $\T>0$ sufficiently small in order that the sections $\{x=-\rho\}$ and $\{x=\T\}$ is contained in $U$ and without loss of generality assume that the points $(-\rho,0)$ and $q$ are connected by a trajectory of $X.$ Thus, we can consider the following maps induced by the flow of $Z_\e^\Phi$: 
\[\begin{array}{rcl}
P_\e:\{x=-\rho\}&\rightarrow &\{x=-\e\},\\ 
T_\e:\{x=-\e\}&\rightarrow &\{x=\e\},\\
Q_\e:\{x=\e\}&\rightarrow &\{x=\T\},
\end{array}
\]
where $Q_\e=Id$ (see Figure \ref{figextmap}). The exterior map $D_\e:\{x=-\rho\}\rightarrow\{x=\T\}$ of $Z_\e^\Phi$ is defined as $D_\e(y):=Q_\e\circ T_\e\circ P_\e(y).$
 \begin{figure}[h]
	\begin{center}
    \begin{overpic}[scale=0.3]{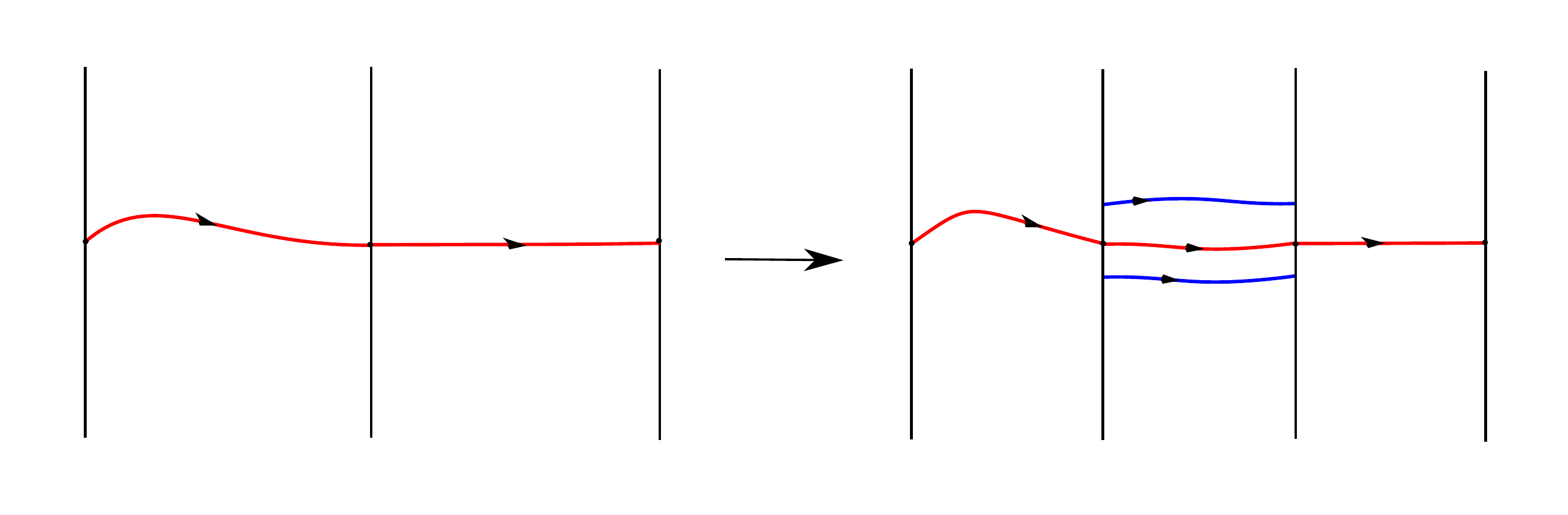}

        \put(86,21){$Q_\e$}
		\put(76,21){$T_\e$}
		\put(49,17){$\Phi$}
		\put(2,17){$0$}
		\put(63,21){$P_\e$}
		\put(29,19){$Q=Id$}
        \put(14,19){$P$}
        \put(24,19){$q_2$}
		\put(82,29){$\e$}
		\put(68,29){$-\e$}
		\put(55,29){$-\rho$}
		\put(94,29){$\T$}
		\put(2,29){$-\rho$}
		\put(42,29){$\T$}
		\put(23,29){$\Sigma$}		
		
		\end{overpic}
		\caption{\tiny{Exterior map $D_\e=Q_\e\circ T_\e\circ P_\e$ of the regularized system $Z_\e^\Phi$.}}
	\label{figextmap}
	\end{center}
	\end{figure}
	
Now, we shall compute the first derivative of $D_\e$ at $\e=0$ and $y=0$. Expanding $P_\e$ and $T_\e$  around $\e=0$, we get that
 $P_\e=P+P_1\e+\CO(\e^2)$ and $T_\e=Id+T_1\e+\CO(\e^2),$ 
where $P:\{x=-\rho\}\rightarrow\Sigma$ is the map induced by the flow of $Z,$ $P_1=\frac{\p P_\e}{\p \e}\Big|_{\e=0}$, and $T_1=\frac{\p T_\e}{\p \e}\Big|_{\e=0}$. Hence, expanding $D_\e$ around $\e=0,$ we have that
 $$D_\e(y)=P(y)+[(P_1(y)+T_1(P(y))]\e+\CO(\e^2).$$
Now, expanding $P_1+T_1\circ P$ around $y=0,$ we get
  \[\begin{array}{rcl}
  D_\e(y)&=&P(y)+[(P_1(0)+T_1(P(0))]\e+\CO(\e^2,\e y)\\
  &=& P(y)+[(P_1(0)+T_1(q_2)]\e+\CO(\e^2,\e y)\\
   &=& P(y)+S\e+\CO(\e^2,\e y),
\end{array}\]
 where $S:=P_1(0)+T_1(q_2).$ In order to determine $S,$ we shall study the maps $P_1$ and $T_1$ at $y=0$ and $y=q_2$, respectively. 
 
 First, we study the map $P_1.$ Let $\varphi_{X}$ be the flow of $X$, and  assume that $I_{(-\rho,y)}$ the maximal interval of existence of $t\mapsto\varphi_{X}(t,-\rho,y)$. Is easy to see that there exists a smooth function $t_\rho(y)$ such that $t_0(0)=0$ and $\varphi^1_{X}(t_\rho(y),-\rho,y)=0.$ Thus, we can write $P$ as follows
$$P(y)=\varphi^2_{X}(t_\rho(y),-\rho,y), \hspace{0.2cm}\text{for} \hspace{0.2cm} y\in \{x=-\rho\}.$$ 
From Implicit Function Theorem, there exists a smooth function $\tau_\rho(y,\e)$ such that $\tau_\rho(y,0)=t_\rho(y)$, $\tau_0(0,0)=0,$ and $\varphi^1_{X}(\tau_\rho(y,\e),-\rho,y)=-\e,$ for $|\e|\neq0$ sufficiently small. Hence, we can write $P_\e$ as follows
$$P_\e(y)=\varphi^2_{X}(\tau_\rho(y,\e),-\rho,y), \hspace{0.2cm}\text{for} \hspace{0.2cm}  y\in \{x=-\rho\}.$$
Notice that
$$P_1(y)=\frac{\p P_\e}{\p \e}(y)\Big|_{\e=0}=\frac{\varphi^2_{X}}{\p t}(t_\rho(y),-\rho,y)\frac{\p\tau_\rho}{\p \e}(y,0)$$
and
$$\frac{\p\tau_\rho}{\p \e}(y,0)=-\frac{1}{\frac{\varphi^1_{X}}{\p t}(t_\rho(y),-\rho,y)}.$$
Consequently, 
  \[
  P_1(0)=-\displaystyle\frac{\frac{\varphi^2_{X}}{\p t}(t_\rho(0),-\rho,0)}{\frac{\varphi^1_{X}}{\p t}(t_\rho(0),-\rho,0)}
   = -\displaystyle\frac{X_2(q)}{1}
    = -X_2(q).
\]

Second, we study the map $T_1.$ For this, consider the differential system associated to the regularized system $Z_{\e}^{\Phi}$ \eqref{regula} given by
\begin{equation}\label{reg1sys}
\left\lbrace\begin{array}{l}
\dot{x} = 1,\vspace{0.2cm}\\
\dot{y} =   \dfrac{1}{2} X_2(x,y)\left(1+\Phi(x/\e)\right),
\end{array}\right.
\end{equation}
for $(x,y)\in U$ and $\e>0$ small enough. 
Notice that system \eqref{reg1sys}, restricted to the band of regularization $-\e\leqslant x \leqslant \e,$ is as a {\it slow-fast problem}. Indeed, taking $v=x/\e,$ we get the so-called {\it slow system},
\begin{equation}\label{regsys}
\left\lbrace\begin{array}{l}
\e\dot{v} = 1,\vspace{0.2cm}\\
\dot{y} =   \dfrac{1}{2} X_2(\e v,y)\left(1+\phi(v)\right),
\end{array}\right.
\end{equation}
defined for $-1 \leqslant v \leqslant 1.$ Performing the time rescaling $t=\e\tau,$ we obtain the so-called {\it fast system},
\begin{equation}\label{regsys}
\left\lbrace\begin{array}{l}
v' = 1,\vspace{0.2cm}\\
y' = \dfrac{\e}{2} X_2(\e v,y)\left(1+\phi(v)\right).
\end{array}\right.
\end{equation}
Now, we know that the equation for the orbits of system \eqref{regsys} is given by
\begin{equation}\label{eq29}
\frac{dy}{dv}=\e F(v,y,\e),
\end{equation}
where 
$$F(v,y,\e)=\frac{ X_2(\e v,y)\left(1+\phi(v)\right)}{2}.$$ 
Expanding $F$ around $\e=0,$ we get
$$F(v,y,\e)=\frac{ X_2(0,y)\left(1+\phi(v)\right)}{2}+\CO(\e).$$ 
In addition, the solution of differential equation \eqref{eq29} with initial condition $y_\e(-1,q_2)=q_2$ is given by 
 \[\begin{array}{rcl}
y_\e(v,q_2)&=&q_2+\e\displaystyle\int_{-1}^v F(s,y_\e(s,q_2),\e)ds\\
&=&q_2+\e\displaystyle\int_{-1}^v F(s,y_0(s,q_2),0)ds+\CO(\e^2)\\
  &=& q_2+\displaystyle\frac{\e}{2}\int_{-1}^v X_2(q)\left(1+\phi(s)\right)ds+\CO(\e^2).
 \end{array}
 \]
Notice that 
 \[\begin{array}{rcl}
 T_\e(q_2)&=& y_\e(1,q_2)\\
 &=&q_2+\e\displaystyle\int_{-1}^1 F(s,y_0(s,q_2),0)ds+\CO(\e^2)\\
  &=& q_2+\displaystyle\frac{\e}{2}\int_{-1}^1 X_2(q)\left(1+\phi(s)\right)ds+\CO(\e^2).
\end{array}
 \]
Accordingly,
 \[
 \begin{array}{rcl}
 T_1(q_2)&=&\displaystyle\frac{1}{2}\int_{-1}^1 X_2(q)\left(1+\phi(s)\right)ds\\
 &=& \displaystyle\frac{X_2(q)}{2}\left(2+\int_{-1}^1 \phi(s)ds\right).
 \end{array}
 \]
Therefore, 
 \[
 \begin{array}{rcl}
 S&=& P_1(0)+T_1(q_2)\\
  &=& -X_2(q)+\displaystyle\frac{X_2(q)}{2}\left(2+\int_{-1}^1 \phi(s)ds\right)\\
 &=& \displaystyle\frac{X_2(q)}{2}\int_{-1}^1 \phi(s)ds.
 \end{array}
 \]
 
 \section*{Appendix B: Upper and Lower Transition Maps}\label{sec:appendix}

In this section, we need to know how is the behaviour of the trajectories of the regularized system $Z_{\e}^{\Phi}$ around visible regular-tangential singularities. In \cite{NovRon2019} was proven how is the dynamic of $Z_{\e}^{\Phi}$ near a visible regular-tangential singularity. Before stating these results we need to introduce some notations.

Let $Z=(X^+,X^-)_{\Sigma},$ be a Filippov system where $X^{\pm}:V\subset\R^2\rightarrow\R^2$ is $\mathcal{C}^{2k}$ vector fields with  $k\geq 1$ and $V$ an open set of $\R^2$. Consider $\Sigma$ a $\mathcal{C}^{2k}$ embedded codimension one submanifold of $V.$ Assume that $X^+$ has a visible  $2k$-multiplicity contact with $\Sigma$ at $p$ and that $X^-$ is pointing towards $\Sigma$ at $p.$ Performing a local change of coordinates and a rescaling of time, we get that $p=(0,0)$ and $h(x,y)=y$. Therefore, without loss of generality, we can assume that the Filippov system $Z=(X^+,X^-)_{\Sigma}$ satifies
\begin{itemize}
\item[{\bf (H)}] $X^+$ admits a visible  $2k$-multiplicity contact with $\Sigma$ at $p,$ $X_1^+(p)>0,$ and there exists an open set $U\subset\R^2$ of $p$ such that $X^-\big|_U=(0,1)$ and $\Sigma\cap U=\{(x,0):\, x\in(-x_U,x_U)\}.$  
\end{itemize}
The following result was proven in \cite{NovRon2019}. It establishes the intersection between the trajectory of $X^+$ (satisfying {\bf (H)}) starting at  $p$ with some sections.
\begin{lemma}\cite[Lemma 1]{NovRon2019}\label{y0}
Assume that $X^+$ satisfies hypothesis {\bf (H)}. For $\rho>0,$ $\T>0,$ and $\e>0$ sufficiently small, the trajectory of $X^+$ starting at $p$ intersects transversally the sections $\{x=-\rho\},$ $\{x=\T\},$ and $\{y=\e\},$ respectively, at $(-\rho,\ov y_{-\rho}),$ $(\T,\ov{y}_{\T}),$ and $(\ov{x}^{\pm}_{\e},\e),$ where
\begin{equation}\label{secpoints}
\ov y_{x}=\frac{\al\, x^{2k}}{2k}+\CO(x^{2k+1}) \quad \text{and}\quad \ov{x}^{\pm}_{\e}=\pm\e^{\frac{1}{2k}}\left(\frac{2k}{\alpha}\right)^{\frac{1}{2k}}+\mathcal{O}(\e^{1+\frac{1}{2k}}).
\end{equation}
for some positive constant $\alpha.$ 
\end{lemma}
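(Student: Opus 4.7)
The plan is to reduce the statement to a one-dimensional scalar computation by parameterizing the trajectory of $X^+$ through $p=(0,0)$ as a graph $y=g(x)$. By hypothesis {\bf (H)}, $X_1^+(p)>0$ and, by continuity, $X_1^+>0$ on a neighborhood $U'\subset U$ of $p$. On $U'$ the orbits of $X^+$ can be parameterized by $x$ and the solution through $p$ satisfies the scalar ODE
\[
g'(x)=\frac{X_2^+(x,g(x))}{X_1^+(x,g(x))},\qquad g(0)=0.
\]
I would then translate the visible $2k$-multiplicity contact of $X^+$ with $\Sigma=\{y=0\}$ at $p$ into the conditions $g(0)=g'(0)=\cdots=g^{(2k-1)}(0)=0$ and $g^{(2k)}(0)>0$. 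This follows by induction on $j$ by differentiating $h\circ\f_{X^+}(t,p)=g(\f^1_{X^+}(t,p))$ at $t=0$: using $\f^1_{X^+}(0,p)=0$ and $\dot\f^1_{X^+}(0,p)=X_1^+(p)>0$, each vanishing condition $(X^+)^{j}h(p)=0$ for $1\le j\le 2k-1$ is equivalent to $g^{(j)}(0)=0$, while the nonvanishing and sign of $(X^+)^{2k}h(p)>0$ (visibility) force $g^{(2k)}(0)>0$.

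Setting $\al:=g^{(2k)}(0)/(2k-1)!>0$, the Taylor expansion
\[
g(x)=\frac{\al\, x^{2k}}{2k}+\CO(x^{2k+1})
\]
follows at once, producing the claimed formula for $\ov y_x$ by setting $x=-\rho$ and $x=\T$ respectively. Transversality with the vertical sections $\{x=-\rho\}$ and $\{x=\T\}$ is immediate, since $\dot x=X_1^+>0$ along the trajectory.

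For $\ov x^{\pm}_\e$, I would invert the equation $g(x)=\e$ via the rescaling $x=u\,\e^{1/2k}$. Dividing the resulting equation by $\e$ and using the expansion of $g$ transforms it into $u^{2k}/(2k)=1/\al+\CO(\e^{1/2k})$ in which the remainder is smooth in $(u,\e^{1/2k})$. The implicit function theorem, applied near each of the two simple roots $u=\pm(2k/\al)^{1/2k}$ of the limiting equation, then yields, for $\e>0$ sufficiently small, exactly two solutions whose expansions in $\e^{1/2k}$ begin with the stated leading term. Transversality with $\{y=\e\}$ at the points $(\ov x^\pm_\e,\e)$ follows because $g'(\ov x^\pm_\e)$ is of order $\e^{1-1/2k}\neq 0$, so $\dot y=X_1^+\, g'\neq 0$ at these points.

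The one routine but delicate step is the translation of the iterated Lie derivatives $(X^+)^{j}h(p)$ into derivatives of the graph $g$ at $0$; once that identification is in place, along with the positivity of $X_1^+(p)$, the assumptions of tangential multiplicity and visibility transfer directly into the vanishing profile of $g$ and the positivity of $\al$, after which all remaining asymptotic statements reduce to a single application of the implicit function theorem.
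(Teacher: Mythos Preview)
The paper does not give its own proof of this lemma; it is quoted verbatim from \cite{NovRon2019}, so there is nothing to compare against here. Your approach is the natural one and is correct in all essentials: writing the orbit through $p$ as a graph $y=g(x)$ using $X_1^+(p)>0$, transferring the $2k$-multiplicity visible contact into $g(0)=\cdots=g^{(2k-1)}(0)=0$, $g^{(2k)}(0)>0$ via the identity $(X^+)^jh(p)=\tfrac{d^j}{dt^j}\big|_{t=0}g(x(t))$ together with $\dot x(0)=X_1^+(p)\neq 0$, and then reading off the Taylor expansion of $g$ and inverting $g(x)=\e$ by the rescaling $x=u\,\e^{1/2k}$ and the implicit function theorem. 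The transversality checks are also fine.

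One small point worth flagging: the rescaling argument you describe yields $\ov x^{\pm}_\e=\pm(2k/\al)^{1/2k}\e^{1/2k}+\CO(\e^{1/k})$, i.e.\ a remainder of order $\e^{2/2k}$, whereas the quoted statement claims $\CO(\e^{1+1/2k})$. Your order is the one that actually follows from the hypotheses (a generic $(2k{+}1)$-st Taylor coefficient of $g$ produces a correction of size $\e^{1/k}$), and it is all that is used in the present paper (only $\ov x^{+}_\e=\CO(\e^{1/2k})$ is needed, e.g.\ at the end of Section~\ref{sec:prooftc1}). So your argument establishes what is required; the sharper remainder in the displayed formula is not something your method---or the hypotheses---can deliver without additional structure.
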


 The same way, we state the following lemma, which  establishes the intersection between the trajectory of $X^-$ (satisfying {\bf (H)}) starting at $p$ with the section $\{y=-\e\}$.
\begin{lemma}
Assume that $X^-$  satisfies hypothesis {\bf (H)}. For $\e>0$ sufficiently small, the trajectory of $X^-$ starting at $p$ intersects transversally the section $\{y=-\e\},$ at $(0,-\e).$
\end{lemma}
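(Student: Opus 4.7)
The plan is almost immediate once hypothesis \textbf{(H)} is unpacked. First I would observe that, under \textbf{(H)}, the vector field $X^-$ coincides with the constant field $(0,1)$ on the neighborhood $U$ of $p=(0,0)$. Consequently its flow is the vertical translation $\varphi_{X^-}(t,x_0,y_0)=(x_0,y_0+t)$, valid as long as the trajectory remains in $U$.

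Next I would specialize to the initial condition $p=(0,0)$: the trajectory reads $t\mapsto\varphi_{X^-}(t,p)=(0,t)$, a vertical segment through the origin. Since $U$ is open and contains $p$, there exists $\e_0>0$ such that $(0,t)\in U$ for every $|t|\le\e_0$. For each $\e\in(0,\e_0)$, the equation $\varphi^2_{X^-}(t,p)=-\e$ has the unique solution $t=-\e$, because $t\mapsto\varphi^2_{X^-}(t,p)=t$ is strictly monotonic. Hence the trajectory meets the section $\{y=-\e\}$ at the single point $(0,-\e)$.

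Finally, transversality follows by writing the defining function $h(x,y)=y+\e$ of the section and computing $X^-h(0,-\e)=\langle\nabla h,X^-\rangle=1\neq 0$; equivalently, the tangent direction $(1,0)$ of $\{y=-\e\}$ is linearly independent from $X^-(0,-\e)=(0,1)$. No real obstacle is anticipated: the statement is essentially a direct integration of the constant flow granted by \textbf{(H)}, included here for symmetry with Lemma \ref{y0} so that both intersection points are on record before invoking them in the proof of Theorem \ref{tc2}.
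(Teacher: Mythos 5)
Your proof is correct and follows essentially the same route as the paper: under \textbf{(H)} one has $X^-|_U=(0,1)$, the trajectory through $p$ is the vertical line $t\mapsto(0,t)$, which meets $\{y=-\e\}$ at $(0,-\e)$, and transversality holds because $X^-h=1\neq 0$ there. The only (harmless) addition is your explicit remark that $(0,t)\in U$ for $|t|\le\e_0$, which the paper leaves implicit.
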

\begin{proof}
Consider the differential equation induced by the vector field $X^-$
\begin{equation}\left\lbrace\begin{array}{rl}\label{cs-}
x'  = & 0,\\
y'  = & 1.\\    
\end{array}\right.\end{equation}
Denote by $(x(t),y(t))$ the solution of system \eqref{cs-} satisfying $x(0)=0$ and $y(0)=0.$ Thus, $x(t)=0$ and $y(t)=t.$ 

Hence, taking $\e>0$ sufficiently small, we conclude that the trajectory of $X^-$ starting at $p$ intersects the section $\{y=-\e\}$ at the point $(0,-\e).$ This intersection is transversal, because $ X_2^-(x,y)=1$ for every $(x,y)\in U.$
\end{proof}

Now, given $\Phi\in C^{n-1}_{ST}$ as \eqref{Phi}, with $k\geqslant1,$ and $n\geqslant 2k-1$, define
\begin{equation*}\label{xe}
x_{\e}=\e^{\la^*}\eta+\mathcal{O}\left(\e^{\la^*+\frac{1}{1+2k(n-1)}}\right),
\end{equation*}
where $\la^*\defeq \frac{n}{1+2k(n-1)}$ and $\eta$ is a constant satisfying
 \begin{equation*}\label{sigmakn}
\eta>\left\lbrace\begin{array}{lllll}
0  & if & n>2k-1,\\
-\left(\dfrac{\partial_y X_2^+(p)}{\alpha}\right)^{\frac{1}{2k-1}} & if & n=2k-1 \hspace{0.1cm}\text{and}\hspace{0.1cm} k\neq 1,\\      
\end{array}\right.
\end{equation*} 
for some positive constant $\alpha$ and
\begin{equation}\label{ye}
\begin{array}{l}
y^\e_{\rho,\la}=\overline{y}_{-\rho}+\e+\mathcal{O}(\e \rho)+\beta \e^{2k\la}+\mathcal{O}(\e^{(2k+1)\la})+\mathcal{O}(\e^{1+\la}), \vspace{0.1cm}\\

\displaystyle y_{\T}^{\e} =\ov y_{\T}+\e+\mathcal{O}(\e\T)+\sum_{i=1}^{2k-1}\mathcal{O}(\T^{2k+1-i} x_\e^i)+\mathcal{O}(x_{\e}^{2k}),
\end{array}
\end{equation}
where $\la\in(0,\la^*),$ $\T$ and $\rho>0$ are parameters depending (eventually) on $\e,$ $\beta$ is a parameter such that $\sgn(\beta)=-\sgn((X^+)^{2k}h(p))<0,$ and $\ov y_{-\rho}$ and $\ov y_{\T}$ are given by Lemma \ref{y0}.

We are already prepared to establish the two main theorems of this section.
\begin{theorem}\cite[Theorem 1]{NovRon2019}\label{ta1}
Consider a Filippov system $Z=(X^+,X^-)_{\Sigma}$ and assume that $X^+$ satisfies hypothesis {\bf (H)} for some $k\geqslant 1.$ For $n\geqslant 2k-1,$ let $\Phi\in C^{n-1}_{ST}$ be given as \eqref{Phi} and consider the regularized system $Z_{\e}^{\Phi}$ \eqref{regula}. Then, there exist $\rho_0,\T_0>0,$ and constants  $\beta<0$ and $c,r,q>0,$ for which the following statements hold for every $\rho\in(\e^\la,\rho_0],$ $\T\in[x_\e,\T_0],$ $\la\in(0,\la^*),$ with $\e>0$ sufficiently small.
\begin{itemize}
\item[(a)] The vertical segments 
\[
\widehat V_{\rho,\la}^{\e}=\{-\rho\}\times [\e,y_{\rho,\la}^{\e}]\,\,\text{ and }\,\, \widetilde V_{\T}^{\e}=\{\T\}\times[y_\T^\e,y_\T^\e+r e^{-\frac{c}{\e^q}}]
\]
and the horizontal segments 
\[
\widehat H_{\rho,\la}^{\e}=[-\rho,-\e^{\la}]\times\{\e\}\,\,\text{ and }\,\, \overleftarrow{H}_{\e}=[x_{\e}-r e^{-\frac{c}{\e^q}},x_\e]\times\{\e\}
\]
are transversal sections for $Z_{\e}^{\Phi}.$

\item[(b)] The flow of $Z_{\e}^{\Phi}$ defines a map $U_{\e}$ between the transversal sections $\widehat V_{\rho,\la}^{\e}$ and $\widetilde V_{\T}^{\e}$ satisfying
\[
\begin{array}{cccl}
U_{\e}:& \widehat V_{\rho,\la}^{\e}& \longrightarrow& \widetilde V_{\T}^{\e}\\
&y&\longmapsto&y_{\T}^{\e}+\CO(e^{-\frac{c}{\e^q}}).
\end{array}
\]
\end{itemize}
The map $U_\e$ is called \textit{Upper Transition Map} of the regularized system $Z_{\e}^{\Phi}.$
\end{theorem}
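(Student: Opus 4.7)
The plan is to analyze $Z_\e^\Phi$ inside the regularization band $|y|\le\e$ as a slow--fast system, apply Fenichel's theorem where normal hyperbolicity holds, and handle the visible $2k$-contact at $p$ as a turning point via a blow-up argument. Transversality of the four listed sections is the elementary first step: on the upper boundary $y=\e$ one has $\Phi(1)=1$, so $Z_\e^\Phi=X^+$ there, and the visible $2k$-tangency forces $X_2^+(x,0)<0$ for $x\in[-\rho,-\e^\la]$ (and the symmetric sign on the interval used for $\overleftarrow H_\e$), giving transversality of $\widehat H_{\rho,\la}^\e$ and $\overleftarrow H_\e$; the vertical segments $\widehat V_{\rho,\la}^\e$ and $\widetilde V_\T^\e$ lie in the region $y\ge\e$ where $Z_\e^\Phi=X^+$, and their transversality follows from $X_1^+(p)>0$ in hypothesis \textbf{(H)}.

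Inside the band, rescale $v=y/\e\in[-1,1]$. Using $X^-|_U=(0,1)$, the regularized system becomes
\begin{equation*}
\dot x=\tfrac{1+\Phi(v)}{2}X_1^+(x,\e v),\qquad \e\dot v=\tfrac{1+\Phi(v)}{2}X_2^+(x,\e v)+\tfrac{1-\Phi(v)}{2},
\end{equation*}
a singular perturbation problem whose critical manifold $S_0=\{F_0(x,v)=0\}$, with $F_0(x,v)=\tfrac{1+\Phi(v)}{2}X_2^+(x,0)+\tfrac{1-\Phi(v)}{2}$, is $C^{n-1}$ and, since $X_2^+(x,0)<0$ for $x\in[-\rho,-\e^\la]$, normally attracting on any compact subarc not reaching $p$. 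Fenichel's theorem then produces an invariant slow manifold $S_\e$, $\e$-close to $S_0$, attracting a full neighborhood exponentially fast with contraction rate $e^{-c/\e^q}$ after a bounded slow time. A trajectory starting at $(-\rho,y)\in\widehat V_{\rho,\la}^\e$ first follows $X^+$ in the upper half, enters the band through $\{y=\e\}$ at some $x\in[-\rho,-\e^\la]$ controlled by Lemma \ref{y0}, collapses onto $S_\e$ within a short transient, and then tracks $S_\e$ with $\CO(e^{-c/\e^q})$ accuracy up to a small neighborhood of $p$.

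The main obstacle is the passage across $p$, where $\partial_v F_0$ vanishes because of the $2k$-contact and Fenichel's theorem no longer applies. I would resolve this by a weighted blow-up of the origin in $(x,v,\e)$-space tuned to the $2k$-contact order and to the $C^{n-1}$ smoothness of $\Phi$ at $v=\pm1$; the exponent $\la^*=n/(1+2k(n-1))$ emerges as the natural scaling balance between these two data, and $x_\e\sim\e^{\la^*}$ is the matching exit scale. A chart-by-chart analysis (entry chart matching the Fenichel flow, rescaling chart for the turning-point passage, exit chart matching the smooth $X^+$-flow) then shows that the exponentially small accuracy acquired along $S_\e$ is preserved through the turning point, and that the bounded-time segment from the exit of the band to $\widetilde V_\T^\e$ under the smooth $X^+$-flow preserves this bound as well, yielding $U_\e(y)=y_\T^\e+\CO(e^{-c/\e^q})$ uniformly in $y\in\widehat V_{\rho,\la}^\e$.
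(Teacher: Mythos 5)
First, a structural point: this paper does not prove Theorem~\ref{ta1} at all --- it is quoted verbatim from \cite[Theorem A]{NovRon2019} and used as a black box --- so the comparison below is with the argument of that reference. Your overall strategy (elementary sign checks for transversality; the rescaling $v=y/\e$ turning the band into a slow--fast system; Fenichel theory on the normally attracting part of the critical manifold $F_0(x,v)=0$; a separate analysis of the loss of normal hyperbolicity at the visible $2k$-contact) is the right one and is, in spirit, the strategy of the cited proof. Your rescaled system and critical manifold are correct, and since $X_2^+(x,0)\sim \al x^{2k-1}$ near $p$, the manifold is indeed normally attracting for $x\in[-\rho,-\e^{\la}]$. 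One detail worth making explicit even at this stage: transversality of $\widehat H_{\rho,\la}^{\e}$ requires $X_2^+(x,\e)\neq 0$ (not merely $X_2^+(x,0)\neq0$), which forces $\e^{\la(2k-1)}\gg\e$, i.e.\ $\la(2k-1)<1$; this holds because $\la<\la^*\leq 1/(2k-1)$, the latter inequality being equivalent to the hypothesis $n\geq 2k-1$. This is the first place the smoothness class of $\Phi$ interacts with the contact order, and it is invisible in your write-up.

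The genuine gap is that essentially all of the quantitative content of the theorem sits inside the step you only name. The statement is not just ``an upper transition map exists with exponentially small image'': it specifies the exit abscissa $x_\e=\e^{\la^*}\eta+\cdots$ with $\la^*=n/(1+2k(n-1))$ and a constrained $\eta$, the expansions of $y^{\e}_{\rho,\la}$ (including the sign of $\beta$, which comes from pulling the point $(-\e^{\la},\e)$ back along the $X^+$-flow to $\{x=-\rho\}$ and produces the $\beta\e^{2k\la}$ term) and of $y^{\e}_{\T}$, and the persistence of the $\CO(e^{-c/\e^{q}})$ contraction through the degenerate corner $(x,v)=(0,1)$, where $\p_v F_0=\tfrac{1}{2}\phi'(v)\bigl(X_2^+(x,0)-1\bigr)$ vanishes to order $n-1$ in $(1-v)$ (because $\phi^{(i)}(1)=0$ for $i\leq n-1$) simultaneously with $X_2^+(x,0)$ vanishing to order $2k-1$ in $x$. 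The balance $(1-v_0(x))^{n}\sim|x|^{2k-1}$ together with the slow-time integral of the contraction rate is what actually produces $\la^*$; you assert that a ``weighted blow-up tuned to the two orders'' yields this exponent, but you exhibit neither the weights, nor the charts, nor the estimate showing the exponentially small error is not destroyed while normal hyperbolicity degenerates, nor the role of the requirement that $\Phi$ be $C^{n-1}$ but \emph{not} $C^{n}$ at $1$. In \cite{NovRon2019} this passage is handled by constructing the Fenichel manifold as a graph and continuing it with explicit contraction estimates up to $x=x_\e$; a blow-up route could also work, but as written your proposal states the conclusion of the hard step rather than proving it.
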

		
	 \begin{theorem}\cite[Theorem 2]{NovRon2019}\label{tb1}
	 Consider a Filippov system $Z=(X^+,X^-)_{\Sigma}$ and assume that $X^+$ satisfies hypothesis {\bf (H)} for some $k\geqslant 1.$ For $n\geqslant 2k-1,$ let $\Phi\in C^{n-1}_{ST}$ be given as \eqref{Phi} and consider the regularized system $Z_{\e}^{\Phi}$ \eqref{regula}. Then, there exist $\rho_0,\T_0>0,$ and constants $c,r,q>0,$ for which the following statements hold for  every $\rho\in(\e^\la,\rho_0],$ $\T\in[x_\e+re^{-\frac{c}{\e^q}},\T_0],$ $\la\in(0,\la^*),$ with $\e>0$ sufficiently small.
	 \begin{itemize}
\item[(a)] The vertical segment
\[
\widecheck V_{\T}^{\e}=\{\T\}\times[y_\T^\e-r e^{-\frac{c}{\e^q}},y_\T^\e]
\]
and the horizontal segments 
\[
\widecheck H_{\rho,\la}^{\e}=[-\rho,-\e^{\la}]\times\{-\e\}\,\,\text{ and }\,\, \overrightarrow{H}_{\e}=[x_{\e},x_{\e}+r e^{-\frac{c}{\e^q}}]\times\{\e\}
\]
are transversal sections for $Z_{\e}^{\Phi}.$

\item[(b)] The flow of $Z_{\e}^{\Phi}$ defines a map $L_{\e}$ between the transversal sections $ \widecheck H_{\rho,\la}^{\e}$ and $\widecheck V_{\T}^{\e},$ namely
\[
\begin{array}{cccl}
L_{\e}:& \widecheck H_{\rho,\la}^{\e}& \longrightarrow& \widecheck V_{\T}^{\e}\\
&x&\longmapsto&y_{\T}^{\e}+\CO(e^{-\frac{c}{\e^q}}).
\end{array}
\]
\end{itemize}
The map $L_\e$ is called \textit{Lower Transition Map} of the regularized system $Z_{\e}^{\Phi}.$
\end{theorem}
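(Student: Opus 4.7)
The plan is to mirror the proof of the Upper Transition Map (Theorem \ref{ta1}) from \cite{NovRon2019}, adapting the slow--fast analysis to trajectories that enter the regularization band $\{|y|\le\e\}$ from below, rather than from above. Recall that under hypothesis \textbf{(H)} we have $X^-\big|_U=(0,1)$, so inside the band the regularized field is
\[
Z_\e^\Phi(x,y)=\left(\tfrac{1+\Phi(y/\e)}{2}X_1^+(x,y),\;\tfrac{1+\Phi(y/\e)}{2}X_2^+(x,y)+\tfrac{1-\Phi(y/\e)}{2}\right),
\]
and the rescaling $v=y/\e$ puts it in standard slow--fast form with fast variable $v$ and small parameter $\e$.

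First I would dispatch the transversality statements in item (a). Along $\widecheck H_{\rho,\la}^\e\subset\{y=-\e\}$ the regularized field equals $X^-=(0,1)$, which is vertical, so every horizontal segment at $y=-\e$ is transverse to $Z_\e^\Phi$. Along $\overrightarrow{H}_\e\subset\{y=\e\}$ the field equals $X^+$; because $x_\e>0$ is past the tangency $p=(0,0)$ and shrinks only polynomially in $\e$, continuity of $X_2^+$ and the visibility of the contact force $X_2^+>0$ on $\overrightarrow{H}_\e$, giving transversality. For $\widecheck V_\T^\e\subset\{x=\T\}$, observe that $y_\T^\e>\e$ for the admitted range of $\T$, so the section lies strictly above the band where $Z_\e^\Phi=X^+$, and $X_1^+(p)>0$ together with continuity yields $X_1^+>0$ there.

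The main technical work is constructing $L_\e$ and proving the $O(e^{-c/\e^q})$ estimate. I would introduce the fast system obtained from $v=y/\e$ and identify the critical manifold
\[
\CC_0=\Bigl\{(x,v)\in U\times(-1,1):\;(1+\Phi(v))X_2^+(x,0)+(1-\Phi(v))=0\Bigr\},
\]
which, away from the visible $2k$-contact at $p$, is a normally hyperbolic attractor for the fast flow. Fenichel-type persistence (in the $C^{n-1}_{ST}$ category, as developed in \cite{NovRon2019}) then yields an attracting slow manifold $S_{a,\e}$ defined on a domain whose projection to $x$ excludes a neighborhood of order $\e^{\la^*}$ around $p=(0,0)$, together with the exponential contraction estimate $\|q(t)-\pi_{S_{a,\e}}q(t)\|\le re^{-c/\e^q}$ for trajectories $q(t)$ starting in the basin of $S_{a,\e}$. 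A trajectory beginning at $(x_0,-\e)\in\widecheck H_{\rho,\la}^\e$ with $x_0\le-\e^\la<0$ enters the band with fast vertical component, is funnelled onto $S_{a,\e}$ within the band, and, after the slow drift carries it across to $x\approx x_\e$, escapes the band transversally through $\overrightarrow{H}_\e$ within an $O(e^{-c/\e^q})$ window of the canard-like trajectory issuing from the tangency. Outside the band the flow coincides with $X^+$, and the tubular flow along the $X^+$-trajectory through $(x_\e,\ov y_{x_\e})$ transports this window to $\widecheck V_\T^\e$ while preserving the $O(e^{-c/\e^q})$ contraction, giving the desired formula $L_\e(x)=y_\T^\e+\CO(e^{-c/\e^q})$.

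The hard part is the Fenichel-type contraction estimate with the sharp exponent $q$: the critical manifold loses normal hyperbolicity at the $2k$-contact point, so classical Fenichel does not apply uniformly, and one must quantify how close trajectories may come to $p$ before losing control. This is precisely why the admissible range $\rho\in(\e^\la,\rho_0]$ with $\la<\la^*$ is imposed; the exponent $\la^*=n/(1+2k(n-1))$ balances the loss of hyperbolicity (order $x^{2k-1}$) against the smoothness order $n-1$ of $\Phi$, and the estimates of \cite{NovRon2019} used to prove Theorem \ref{ta1} apply verbatim here by reversing the orientation of the $X^-$-flow, since the slow--fast picture on the band is identical for trajectories approaching from above or below.
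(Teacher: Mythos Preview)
The paper does not actually prove this statement: Theorem~\ref{tb1} is quoted from \cite[Theorem B]{NovRon2019} and stated in Appendix~B without proof, exactly as Theorem~\ref{ta1} is quoted from \cite[Theorem A]{NovRon2019}. So there is no ``paper's own proof'' to compare against; the result is imported wholesale from the earlier reference.

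That said, your outline is a reasonable sketch of how the argument in \cite{NovRon2019} presumably goes, and the strategy of reducing to the Upper Transition Map analysis is sound in spirit. One point deserves care: you say the slow--fast picture is ``identical for trajectories approaching from above or below'' and that the estimates apply ``verbatim\ldots by reversing the orientation of the $X^-$-flow.'' This is not quite right. For the Upper Transition Map, trajectories enter the band at $y=\e$ already near the Fenichel manifold and are immediately subject to exponential contraction. For the Lower Transition Map, trajectories enter at $y=-\e$ with $v=-1$, far from the critical manifold $\CC_0\subset\{v\in(-1,1)\}$, and must first traverse a finite fast segment before reaching the basin where Fenichel contraction takes over; moreover the entry $x$-coordinate is essentially frozen during this fast phase since $\dot x=\CO(\e)$. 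This extra transit is harmless for the final estimate but is not a symmetry of the problem, so ``verbatim'' overstates it. The genuine content---loss of normal hyperbolicity at the $2k$-contact, the role of $\la^*$, and the exponential squeezing onto $S_{a,\e}$---is indeed shared with Theorem~\ref{ta1}, and your identification of that as the hard part is correct.
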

 
\section*{Acknowledgments}

DDN is partially supported by FAPESP grants 2018/16430-8, 2018/ 13481-0, and 2019/10269-3, and by CNPq grant 306649/2018-7 and 438975/ 2018-9. GR is supported by São Paulo Research Foundation
(FAPESP) grant 2020/06708-9.

\bibliographystyle{abbrv}
\bibliography{references1}

\end{document}